\documentclass[a4paper,12pt]{article}

\usepackage{amsmath}
\usepackage{amsthm}
\usepackage{enumerate}
\usepackage{amssymb,amsfonts,latexsym,mathtools}
\usepackage{mathrsfs}
\usepackage{bm}
\usepackage{cases}
\usepackage{color}

\newcommand{\Levy}{L\'{e}vy}

\newcommand{\R}{\mathbb{R}}
\newcommand{\F}{\mathscr{F}}

\newcommand{\N}{\mathbb{N}}

\newcommand{\Q}{\mathbb{Q}}

\renewcommand{\P}{\mathbb{P}}
\usepackage{multirow}

\usepackage{url}

\numberwithin{equation}{section}

\makeatletter
\renewcommand\section{\@startsection {section}{1}{\z@}%
{-3.5ex \@plus -1ex \@minus -.2ex}%
{2.3ex \@plus.2ex}%
{\normalfont\large\bf}}
\makeatother

\makeatletter
\renewcommand\subsection{\@startsection {subsection}{1}{\z@}%
{-3.5ex \@plus -1ex \@minus -.2ex}%
{2.3ex \@plus.2ex}%
{\normalfont\normalsize\bf}}
\makeatother

\theoremstyle{plain}
\newtheorem{thm}{Theorem}[section]
\newtheorem{lem}[thm]{Lemma}
\newtheorem{cor}[thm]{Corollary}
\newtheorem{prop}[thm]{Proposition}
\theoremstyle{definition}
\newtheorem{Rem}[thm]{Remark}

\newtheorem{Hyp}[thm]{Hypothesis}
\allowdisplaybreaks

\usepackage[dvipdfmx]{hyperref}
\hypersetup{
setpagesize=false,
 bookmarksnumbered=true,
 bookmarksopen=true,
 colorlinks=true,
 linkcolor=blue,
 citecolor=red,
}

\begin{document}
\begin{center}
\Large \textbf{Fredholm and Sturm--Liouville Type Characterizations of the Limit Measure for the Kac Killing Penalization for \Levy\ Processes}
\end{center}
\begin{center}
Kohki Iba\footnote{
\begin{tabular}[t]{@{}l@{}}
Affiliation: Graduate School of Science, The University of Osaka, Osaka, Japan.\\
E-mail: \url{kohki.iba@gmail.com}
\end{tabular}
}
\end{center}
\begin{abstract}
We study the Kac killing penalization for a one-dimensional recurrent \Levy\ process. We prove that the limit process possesses two equivalent characterizations by a Fredholm equation and by a Sturm--Liouville equation.
\end{abstract}


\section{Introduction}
\subsection{Penalization Problem}
The \emph{penalization problem} concerns the following long-time limit:
\begin{align}
\label{penal-lim}
\lim_{t\to \infty}\frac{\P_x[F_s\cdot \Gamma_t]}{\P_x[\Gamma_t]}\qquad \text{for}\ F_s\in b\F_s,
\end{align}
where $((X_t)_{t\ge 0},(\F_t)_{t\ge 0},(\P_x)_{x\in \R})$ is a Markov process, $\P_x[\cdot]$ denotes integration with respect to the measure $\P_x$, and $(\Gamma_t)_{t\ge 0}$ is a non-negative process referred to as a \emph{weight process}. By choosing $\Gamma$ to be a suitable functional of the underlying stochastic process---for example, a multiplicative functional or a functional constructed from an additive functional---one can give a probabilistic interpretation to the process arising in the limit.

For example, consider $\Gamma_t=f(L_t^0)$, where $(L_t^0)_{t\ge 0}$ denotes the local time at $0$ and $f$ is a suitable function. If $f$ is a non-negative decreasing function, such as $f(l)=e^{-l}$, then $L^0$ increases whenever the original process visits $0$, and consequently $\Gamma$ decreases as local time accumulates. One therefore expects the limit in the penalization problem (\ref{penal-lim}) to describe a process that tends to accumulate relatively little local time at $0$. This type of penalization is referred to as the \emph{local time penalization}. The local time penalization is based on the local time at the single point $0$. The extension of this idea from the local time at a single point to local times on a set, implemented through a suitable weight process, is referred to as the \emph{Kac killing penalization} or the \emph{Feynman--Kac penalization}. More precisely, for an appropriate positive measure $\mu$, one considers the weight
\begin{align}
\label{weight}
\Gamma_t^\mu:=\exp \left(-\int_\R L_t^y\,\mu(dy) \right).
\end{align}

Roynette--Vallois--Yor \cite{Roynette-Vallois-Yor-1,Roynette-Vallois-Yor-2} studied the local time penalization and the Kac killing penalization for one-dimensional Brownian motion:

\begin{thm}[Theorem 5.1 of Roynette--Vallois--Yor \cite{Roynette-Vallois-Yor-1}]
\label{Brown-penal-thm}
Let $X$ be a standard Brownian motion. Suppose that $\mu$ is a finite positive Radon measure satisfying $\int_\R (1+|y|)\,\mu(dy)<\infty$. Then there exists a function $\phi^\mu$ such that
\begin{align}
\phi^\mu(x)=\lim_{t\to \infty}\sqrt{t}\P_x\Big[\Gamma_t^\mu\Big]\qquad \text{for}\ x\in \R
\end{align}
and
\begin{align}
\lim_{t\to \infty}\frac{\P_x[F_s\cdot \Gamma_t^\mu]}{\P_x[\Gamma_t^\mu]}=\P_x\left[F_s\cdot \frac{\phi^\mu(X_s)\Gamma_s^\mu}{\phi^\mu(x)}\right]\qquad \text{for}\ F_s\in b\F_s,
\end{align}
Moreover, the function $\phi^\mu$ is the unique solution to the Sturm--Liouville equation
\begin{align}
\label{Brown-SL}
\frac{\Delta}{2}f=f\cdot \mu\qquad \text{in the weak sense,}
\end{align}
with boundary conditions
\begin{align}
\label{Brown-growth}
  \lim_{x\to \infty}f'(x)=-\lim_{x\to -\infty}f'(x)=\sqrt{\frac{2}{\pi}}.
\end{align}
\end{thm}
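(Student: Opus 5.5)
The plan is to establish the asymptotics $\sqrt{t}\,\P_x[\Gamma_t^\mu]\to\phi^\mu(x)$ through a Tauberian argument applied to the Laplace transform in $t$. Everything else then follows from the Markov property and an analysis of the resulting ODE.

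\textbf{Step 1: Laplace transform.} For $q>0$ set $u_q(x):=\int_0^\infty e^{-qt}\P_x[\Gamma_t^\mu]\,dt$. Since $d\Gamma_t^\mu=-\Gamma_t^\mu\,d\big(\int L_t^y\mu(dy)\big)$, the Feynman--Kac/resolvent identity gives
\begin{align*}
u_q(x)=\frac1q-\int_\R r_q(x,y)\,u_q(y)\,\mu(dy),
\end{align*}
where $r_q(x,y)=\frac{1}{\sqrt{2q}}e^{-\sqrt{2q}|x-y|}$ is the Brownian resolvent density. We write $r_q(x,y)=\frac{1}{\sqrt{2q}}-h_q(x-y)$ with $h_q(z)=\frac{1-e^{-\sqrt{2q}|z|}}{\sqrt{2q}}\to|z|$ as $q\downarrow0$. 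Multiplying by $\sqrt{2q}$ and letting $q\to0$, we expect $\sqrt{q}\,u_q(x)\to\psi(x)$, where $\psi$ solves the Fredholm-type equation
\begin{align*}
\psi(x)=c+\int_\R |x-y|\,\psi(y)\,\mu(dy),\qquad c\int_\R\psi\,d\mu\ \text{fixed by normalization}.
\end{align*}
Here the constant $c$ is forced by requiring that the divergent $1/\sqrt{2q}$ terms balance. Concretely, $\frac{1}{\sqrt{2q}}\big(\frac{\sqrt{2q}}{q}\cdot\text{stuff}-\int u_q\,d\mu\big)$ must stay bounded, which determines $\int\sqrt q\,u_q\,d\mu\to\sqrt2$ up to constants.

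Since $t\mapsto\P_x[\Gamma_t^\mu]$ is nonincreasing, Karamata's Tauberian theorem with index $1/2$ converts $u_q(x)\sim\psi(x)q^{-1/2}$ into $\P_x[\Gamma_t^\mu]\sim\psi(x)/(\Gamma(1/2)\sqrt t)$. We then set $\phi^\mu:=\psi/\sqrt\pi$.

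\textbf{Step 2: identification as a Sturm--Liouville solution.} Differentiating $\int|x-y|\psi(y)\mu(dy)$ twice in the weak sense gives $\frac12\phi''=\phi\,\mu$. Moreover, $\phi'(\pm\infty)=\pm\int\phi\,d\mu$, and the normalization from Step 1 makes this equal $\sqrt{2/\pi}$. The hypothesis $\int(1+|y|)\mu(dy)<\infty$ guarantees that all these integrals converge, since $\phi$ grows at most linearly.

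For uniqueness, suppose $f_1,f_2$ are two solutions and let $g=f_1-f_2$. Then $g''=2g\mu$ and $g'(\pm\infty)=0$, so $g$ is bounded. Integrating $(gg')'=g'^2+2g^2\mu$ over $\R$ gives $\int g'^2+2\int g^2\,d\mu=0$. Hence $g$ is constant, and if $\mu\neq0$ it vanishes.

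\textbf{Step 3: the penalization limit.} By the Markov property,
\begin{align*}
\P_x[F_s\Gamma_t^\mu]=\P_x\big[F_s\Gamma_s^\mu\,\P_{X_s}[\Gamma_{t-s}^\mu]\big].
\end{align*}
Multiplying by $\sqrt t$, the inner factor converges to $\phi^\mu(X_s)$. To pass the limit inside, we need a domination of the form $\sqrt{t}\,\P_y[\Gamma^\mu_{t-s}]\le C(1+|y|)$ uniformly for large $t$. This follows by comparing with the hitting time of the support of $\mu$ together with the standard estimate $\P_y[T_0>t]\le C|y|/\sqrt t$. Since $X_s$ is Gaussian, dominated convergence applies. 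Dividing by the $s=0$ case and noting $\phi^\mu>0$ completes the proof.

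\textbf{Main obstacle.} The delicate step is Step 1: justifying the limit $q\to0$ in the resolvent equation, in particular showing that $\sqrt q\,u_q$ converges locally uniformly and that $\int\sqrt q\,u_q\,d\mu$ converges. I would first get $q$-uniform bounds $\sqrt q\,u_q(x)\le C(1+|x|)$ from the monotonicity and the hitting-time estimate above. Then I would use equicontinuity, which comes from the Lipschitz bound on $r_q$, and extract subsequences by Arzelà--Ascoli. The uniqueness from Step 2 identifies the limit, so the whole family $\sqrt q\,u_q$ converges.
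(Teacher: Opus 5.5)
Your architecture matches the one the paper uses for its L\'evy generalization: the resolvent identity of Section~\ref{S2}, compactness plus uniqueness of the limit equation as in Proposition~\ref{prop3.1}, and Karamata with $\rho=1/2$ as in Appendix~\ref{Appendix2}. Note that the paper only quotes this statement. Its machinery is built for compactly supported $\mu$, where specializing to $h(x)=|x|$ and $r_q(0)=(2q)^{-1/2}$ recovers the result with $\varphi^\mu=\sqrt{\pi/2}\,\phi^\mu$.

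The genuine gap is the passage to the stated hypothesis $\int(1+|y|)\,\mu(dy)<\infty$. Your claim that ``all these integrals converge, since $\phi$ grows at most linearly'' is false. The relations $\psi''=2\psi\mu\ge 0$ and $\psi'(\pm\infty)=\pm\sqrt2$ force $\psi(y)\sim\sqrt2\,|y|$. Hence $\int|x-y|\,\psi(y)\,\mu(dy)$ is comparable to $\int y^2\,\mu(dy)$, which is $+\infty$ for every $x$ when, e.g., $\mu=\sum_{n\ge1}2^{-n}n^{-2}\delta_{2^n}$. Correspondingly, set $\varphi_q:=\sqrt{q/2}\,u_q=r_q(0)\P_x[\Gamma^\mu_{\bm{e}_q}]$. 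Then the constant $C_q=\varphi_q(0)-\int h_q(y)\varphi_q(y)\,\mu(dy)$ tends to $-\infty$ by Fatou's lemma. So the divergent terms do not balance, there is no finite $c$, and Step~1 has no limit equation with which to identify subsequential limits. The compactness step has the same defect: both bounding $C_q$ and applying Arzel\`a--Ascoli in $C(K)$ use compactness of $K$. As written, your argument covers only compactly supported $\mu$.

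To repair it, use the renormalized equation
\[
\varphi_q(x)-\varphi_q(0)=\int\big(h_q(x-y)-h_q(y)\big)\varphi_q(y)\,\mu(dy),
\]
whose kernel is bounded by $|x|$ uniformly in $q$. Get the uniform bound $\varphi_q(x)\le C(1+|x|)$ from $\Gamma^\mu\le\Gamma^\nu$ with $\nu:=\mu|_{[-R,R]}\neq0$, combined with the compact-case bounds on $C^\nu_q$ and $\|f^\nu_q\|_{\max}$ and with $h_q(z)\le|z|$. The hitting time of $\mathrm{supp}\,\mu$ cannot give this bound, because $\Gamma^\mu$ does not vanish after that time, and $\mathrm{supp}\,\mu$ may be all of $\R$. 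With the bound in hand, $1-\mu[\varphi_q]=\P_0[\Gamma^\mu_{\bm{e}_q}]-\int(1-e^{-\sqrt{2q}|y|})\varphi_q(y)\,\mu(dy)\to0$ by dominated convergence; this is exactly where the first moment enters. Moreover, $\varphi_q$ is Lipschitz with constant $\mu[\varphi_q]$. Arzel\`a--Ascoli on compacts of $\R$ then yields subsequential limits solving the renormalized equation with $\mu[\varphi]=1$, i.e.\ $\tfrac12\varphi''=\varphi\mu$ with $\varphi'(\pm\infty)=\pm1$.

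Your Step~2 then identifies the limit, but ``$g'(\pm\infty)=0$, so $g$ is bounded'' needs a justification. The function $g$ is convex where positive and concave where negative. So a nonzero $g$ has at most one zero, and $|g|$ decreases towards $\pm\infty$, which gives boundedness. The same linear bound, together with monotonicity in $t$, gives the domination needed in Step~3: $t\,\P_y[\Gamma^\mu_t]\le e\int_0^\infty e^{-s/t}\P_y[\Gamma^\mu_s]\,ds=e\sqrt{2t}\,\varphi_{1/t}(y)$. Alternatively, use Scheff\'e's lemma as in Appendix~\ref{Appendix2}.
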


Najnudel \cite{Najnudel} further generalized the Kac killing penalization for Brownian motion by considering weight processes of the form $\Gamma_t=F((L_t^y)_{y\in \R})$.

Yano--Yano--Yor \cite{Yano-Yano-Yor} studied the local time penalization and the Kac killing penalization for one-dimensional symmetric stable processes:

\begin{thm}[Corollary 8.2 of Yano--Yano--Yor \cite{Yano-Yano-Yor}]
\label{Stable-penal-thm}
Let $X$ be a symmetric $\alpha$-stable process with $1<\alpha\le 2$. Suppose that $\mu$ is a finite positive Radon measure satisfying $\int_\R (1+|y|^{\alpha-1})\,\mu(dy)<\infty$. Then
\begin{align}
\lim_{t\to \infty}\frac{\P_x[F_s\cdot \Gamma_t^\mu]}{\P_x[\Gamma_t^\mu]}=\mathscr{P}_x\left[F_s\cdot \frac{\Gamma_\infty^\mu}{\mathscr{P}_x[\Gamma_\infty^\mu]}\right]\qquad \text{for}\ F_s\in b\F_s,
\end{align}
where $\mathscr{P}_x$ is the universal $\sigma$-finite measure (see Section \ref{S5} for details).
\end{thm}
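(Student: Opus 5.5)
The statement just quoted is Theorem \ref{Stable-penal-thm}, from Yano--Yano--Yor. I would prove it by reducing the penalization limit to an asymptotic for $\P_x[\Gamma_t^\mu]$, using the Markov property at time $s$. For $F_s\in b\F_s$ we have $\Gamma_t^\mu=\Gamma_s^\mu\cdot(\Gamma_{t-s}^\mu\circ\theta_s)$, hence
\begin{align*}
\P_x[F_s\cdot\Gamma_t^\mu]=\P_x\bigl[F_s\,\Gamma_s^\mu\,\P_{X_s}[\Gamma_{t-s}^\mu]\bigr].
\end{align*}
It therefore suffices to find a normalization $\rho(t)$ and a positive function $\phi^\mu$ with two properties. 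First, $\rho(t)\P_x[\Gamma_t^\mu]\to\phi^\mu(x)$ for every $x$. Second, $\rho(t)/\rho(t-s)\to1$, and the convergence is dominated, e.g.\ $\rho(t)\P_y[\Gamma_{t-s}^\mu]\le C(1+|y|^{\alpha-1})$ with $\P_x[|X_s|^{\alpha-1}]<\infty$. Given these, dominated convergence yields the limit $\P_x[F_s\,\phi^\mu(X_s)\Gamma_s^\mu]/\phi^\mu(x)$. The last step is to identify this with the $\sigma$-finite measure expression, $\mathscr{P}_x[F_s\Gamma_\infty^\mu]=\P_x[F_s\Gamma_s^\mu\,\phi^\mu(X_s)]$, with $\phi^\mu(x)=\mathscr{P}_x[\Gamma_\infty^\mu]$. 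I would take this identification from the construction of $\mathscr{P}_x$ in Section \ref{S5}, which is built so that $\mathscr{P}_x|_{\F_s}$ has density $h(X_s)$ "at infinite time" relative to $\P_x$.

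The core is the asymptotic of $\P_x[\Gamma_t^\mu]$. For $1<\alpha\le2$ the process is recurrent with continuous local times, and $\P_0[L_t^0]\sim c\,t^{1-1/\alpha}$. So I would take $\rho(t)=t^{1-1/\alpha}$ (this is $\sqrt t$ when $\alpha=2$, matching Theorem \ref{Brown-penal-thm}). To obtain the limit, I would pass to Laplace transforms and study $\int_0^\infty e^{-qt}\P_x[\Gamma_t^\mu]\,dt$ as $q\downarrow0$. Write $A_t=\int L_t^y\mu(dy)$, so that $\Gamma_t^\mu=e^{-A_t}$ and $1-e^{-A_t}=\int_0^t e^{-(A_t-A_u)}\,dA_u$. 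The Feynman--Kac/resolvent identity then gives
\begin{align*}
\int_0^\infty e^{-qt}\P_x[\Gamma_t^\mu]\,dt=\frac1q-\int r_q(x,y)\,\mu(dy)\int_0^\infty e^{-qt}\P_y[\Gamma_t^\mu]\,dt,
\end{align*}
where $r_q$ is the $q$-resolvent density. Next I would decompose $r_q(x,y)=r_q(0,0)-h_q(x-y)$, with $r_q(0,0)\sim c\,q^{-1/\alpha}$ and $h_q\to h$, where $h(x)=c'|x|^{\alpha-1}$ is the renormalized zero resolvent. Solving the resulting equation as $q\to0$ gives
\begin{align*}
\int_0^\infty e^{-qt}\P_x[\Gamma_t^\mu]\,dt\sim\frac{\phi^\mu(x)}{r_q(0,0)\,q\,\langle\mu,\phi^\mu\rangle}\cdot(\text{const}).
\end{align*}
Here $\phi^\mu$ solves the Fredholm-type equation $\phi^\mu(x)=\int h(x-y)\phi^\mu(y)\,\mu(dy)+\text{const}$. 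The moment condition $\int(1+|y|^{\alpha-1})\,\mu(dy)<\infty$ is exactly what makes $\int h(x-y)\,\mu(dy)$ finite and keeps $\phi^\mu$ of order $|x|^{\alpha-1}$.

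Finally, I would pass from the Laplace asymptotic $\sim C\phi^\mu(x)\,q^{1/\alpha-1}$ to the time asymptotic $\P_x[\Gamma_t^\mu]\sim C'\phi^\mu(x)\,t^{1/\alpha-1}$ by a Tauberian theorem, which applies because $t\mapsto\P_x[\Gamma_t^\mu]$ is monotone nonincreasing. The domination bound would follow from the same resolvent estimates, uniformly in $y$ with the growth $1+|y|^{\alpha-1}$. I expect the main obstacle to be the $q\to0$ analysis of the resolvent equation: showing the integral equation is uniquely solvable with a positive solution, and controlling the error $h_q-h$ uniformly against $\mu$ under only the stated moment condition. I would attack it by a Neumann-series/Fredholm argument for the operator $f\mapsto\int h_q(\cdot-y)f(y)\,\mu(dy)$ on functions of growth $1+|x|^{\alpha-1}$.
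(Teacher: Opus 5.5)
\textbf{The main gap.} The paper does not prove this statement; it is quoted from Yano--Yano--Yor. The paper's own machinery (Sections 2--3, Appendix \ref{Appendix2}, Section \ref{S5}) covers only compactly supported $\mu$. Your skeleton (Markov property at $s$, the Feynman--Kac resolvent identity, the split $r_q(y-x)=r_q(0)-h_q(x-y)$, a $q\to0+$ limit, a monotone Tauberian step) is exactly that machinery. But the step you flag as the main obstacle is a genuine gap, and the tool you propose fails under the stated hypothesis, which allows unbounded support.

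With only $\int(1+|y|^{\alpha-1})\,\mu(dy)<\infty$, the right-hand side of $\phi^\mu(x)=\mathrm{const}+\int h(x-y)\phi^\mu(y)\,\mu(dy)$ can be infinite. Since $\phi^\mu$ itself grows like $h$, finiteness essentially needs $\int|y|^{2(\alpha-1)}\,\mu(dy)<\infty$. Concretely, take $\alpha=2$ and $\mu=\delta_0+\sum_{n\ge1}n^{-5/2}\delta_n$. The moment condition holds, and by Theorem \ref{Brown-penal-thm} $\phi^\mu$ is convex with slopes tending to $\pm$ a nonzero constant. So $\phi^\mu(n)\asymp n$ and $\int|x-y|\phi^\mu(y)\,\mu(dy)\asymp\sum_n n^{-1/2}=\infty$. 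Hence $T^\mu$ does not act on functions of growth $1+|x|^{\alpha-1}$. There is also no compactness for a Fredholm alternative, and a Neumann series would need a smallness of $\mu$ you do not have.

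What is actually needed is the following.
(a) The renormalized equation $\phi^\mu(x)=\phi^\mu(0)+\int\bigl(h(x-y)-h(-y)\bigr)\phi^\mu(y)\,\mu(dy)$. By subadditivity, which also holds for $h_q$, its kernel is bounded by $h(x)\vee h(-x)$, so only $\mu[\phi^\mu]<\infty$ is required.
(b) A bound $r_q(0)\P_y[\Gamma^\mu_{\bm{e}_q}]\le C(1+h(y))$, uniform in small $q$. This is where the moment condition genuinely enters: it lets you pass to the limit by dominated convergence against $\mu$. By monotonicity in $t$, the same bound also gives your fixed-time domination.
(c) Uniqueness and positivity of the limit. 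Even in the compact case, the paper needs the energy inequality of Lemma \ref{lem2.3} ($\mathscr{E}_h(\nu)\le0$ whenever $\nu(\R)=0$) together with the Fredholm alternative on $C(K)$. Your proposal has neither that key idea nor a substitute for compactness.

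\textbf{Two smaller points.} First, your exponents are inconsistent. In fact $r_q(0)\asymp q^{1/\alpha-1}$, not $q^{-1/\alpha}$, so the Laplace transform is of order $q^{-1/\alpha}$. A transform of order $q^{1/\alpha-1}$, as you wrote, would give $t^{-1/\alpha}$ by Karamata, not the correct $t^{1/\alpha-1}$ you then state.

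Second, the identification with $\mathscr{P}_x$ is not a matter of a ``density $h(X_s)$ at infinite time''. You need a Markov property of the form $\mathscr{P}_x[F_s\cdot G\circ\theta_s]=\P_x[F_s\,\mathscr{P}_{X_s}[G]]$, plus the nontrivial identity $\phi^\mu(x)=\mathscr{P}_x[\Gamma^\mu_\infty]$ up to a constant factor. In the compact case the paper obtains this identity in Section \ref{S5} by showing $\mathscr{P}^\mu_x=\mathscr{P}_x$ through Yano's comparison theorem. That argument uses transience under $\Q^\mu_x$ and $\varphi^\mu(X_t)/(1+h(X_t))\to1$, which in turn rests on boundedness of $\varphi^\mu-h$. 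In the example above, $\phi^\mu-\kappa h$ is unbounded for every constant $\kappa$. So this step would also have to be redone using only the ratio $\phi^\mu/h$.
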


Although this result establishes the penalization limit itself, it does not provide an equation-based characterization of the function appearing in the limit analogous to the Brownian characterization in (\ref{Brown-SL}). Indeed, Remark 8.4 of their paper \cite{Yano-Yano-Yor} states that such a characterization was not known even for symmetric stable processes. In the present paper, we address this problem for a more general class of recurrent \Levy\ processes under the assumption that the penalizing measure $\mu$ is compactly supported. More precisely, we characterize the function appearing in the limit through a Sturm--Liouville equation in the sense of distributions, formulated in terms of the infinitesimal generator of the \Levy\ process. This resolves, in the compact-support setting, the question left open in the preceding work.

For general \Levy\ processes, Takeda--Yano \cite{Takeda-Yano} studied the local time penalization problem. Concerning the Kac killing penalization, Iba--Yano \cite{Iba-Yano} treated the case $\mu=a_1\delta_{a_1}+a_2\delta_{a_2}$, while Iba \cite{Iba-penalization} studied the case $\mu=a_1\delta_{a_1}+\cdots +a_n\delta_{a_n}$. The present work extends these results by investigating the Kac killing penalization problem associated with a finite positive measure $\mu$ having compact support.

Although the preceding discussion has focused on penalization problems for \Levy\ processes, such problems have also been extensively studied for many other classes of stochastic processes. For example, Debs \cite{Debs} studied random walks, Salminen--Vallois \cite{Salminen-Vallois} and Profeta--Yano--Yano \cite{Profeta-Yano-Yano} studied diffusion processes, and Abraham--Debs \cite{Abraham-Debs} studied Galton-Watson processes. Moreover, in the setting of local time penalization, the choice $f(l)=1_{\{l=0\}}$ gives rise to the problem of \emph{conditioning to avoid zero}. This conditioning problem has a long history and an extensive literature, which we do not attempt to review here.

Finally, we emphasize that the exponent in (\ref{weight}) has a negative sign. Since $\mu$ is a positive measure, the resulting weight process is non-increasing, and trajectories along which the original process visits the relevant set are increasingly suppressed. The penalized process is therefore expected to be less likely to visit that set. One may instead consider a weight with a positive exponent, that is,
\begin{align}
\Gamma_t=\exp \left(\int_\R L_t^y\,\mu(dy)\right).
\end{align}
In this case, the weight process is non-decreasing and favors trajectories along which the original process visits the set. In this sense, such a procedure is more appropriately regarded as a reward rather than a penalty, and in the present paper we reserve the term penalization for the negative-exponent setting. We refer to this positive-exponent setting as the \emph{Kac creation}. This problem has also been studied extensively. Takeda \cite{Takeda} and Wada \cite{Wada} considered symmetric stable processes, while Abildaev \cite{Abildaev} studied symmetric \Levy\ processes in the case $\Gamma_t=\exp (\lambda L_t^a)$ for $\lambda>0$ and $a\in \R$.

\subsection{\Levy\ Process, Resolvent, and Generator}
We denote by $((X_t)_{t\ge 0},(\P_x)_{x\in \R})$ a one-dimensional \Levy\ process such that $\P_x(X_0=x)=1$ for each $x\in \R.$ For $\lambda\in \R$, we denote the characteristic exponent of $X$ by $\Psi$, that is, $\Psi(\lambda)$ satisfies
\begin{align}
\P_0[e^{i\lambda X_t}]=e^{-t\Psi(\lambda)}\qquad \text{for}\ t\ge 0.
\end{align}
By the \Levy--Khintchine formula (see, e.g., Theorem 8.1 of \cite{Sato}), there exist constants $a\in \R$ and $\sigma\ge 0$ and a measure $\Pi$ on $\R$ satisfying $\Pi(\{0\})=0$ and $\int_\R(1\wedge y^2)\,\Pi(dy)<\infty$, such that
\begin{align}
  \Psi(\lambda)=ia\lambda +\frac{1}{2}\sigma^2\lambda^2+\int_\R \Big(1-e^{i\lambda y}+i\lambda y 1_{\{|y|<1\}}\Big)\,\Pi(dy).
\end{align}
The triplet $(a,\sigma^2,\Pi)$ is called the \Levy\ triplet. The process $\widehat{X}:=-X$ is called the dual process of $X$. It is clear that the \Levy\ triplet of $-X$ is given by $(-a,\sigma^2,\widehat{\Pi})$, where $\widehat{\Pi}$ is a measure with $\widehat{\Pi}(dy):=\Pi(-dy)$.

For $f\in L^\infty(\R)$, we define
\begin{align}
  P_tf(x):=\P_x[f(X_t)]\qquad \text{for}\ x\in \R\ \text{and}\ t\ge 0.
\end{align}
It is known that $(P_t)_{t\ge 0}$ restricted on $C_0(\R)$ is a strongly continuous semigroup on $C_0(\R)$, where $C_0(\R)$ denotes the space of continuous functions vanishing at infinity, equipped with the max norm $\|\cdot\|_{\max}$ (see, e.g., Theorem 31.5 of \cite{Sato}). For $q>0$, we define the $q$-resolvent operator $U^q$ by
\begin{align}
U^q f(x):=\P_x\left[\int_0^\infty e^{-qt}f(X_t)\,dt\right]\qquad \text{for}\ f\in L^\infty(\R)\ \text{and}\ x\in \R.
\end{align}
Let $\mathscr{L}$ denote the infinitesimal generator of $(P_t)$ to $C_0(\R)$ and $\mathscr{D}(\mathscr{L})\subset C_0(\R)$ denote its domain, that is, 
\begin{align}
  \mathscr{L}f:=\lim_{t\to 0+}\frac{1}{t}(P_tf-f)\ \text{in\ norm}\qquad \text{for}\ f\in \mathscr{D}(\mathscr{L}),
\end{align}
where $\mathscr{D}(\mathscr{L})$ denotes the class of functions $f$ which admits the norm limit. For $q>0$, the operator $qI-\mathscr{L}$ is known as the inverse of the $q$-resolvent operator $U^q$:
\begin{align}
\label{qI-L}
U^q(qI-\mathscr{L})=I\qquad \text{on}\ \mathscr{D}(\mathscr{L}),
\end{align}
where $I$ denotes the identity operator (see, e.g., p. 23 of \cite{Bertoin}). Let $C_c^\infty(\R)$ denote the space of infinitely differentiable functions with compact support. It is known that $C_c^\infty(\R)\subset \mathscr{D}(\mathscr{L})$ and that for $f\in C_c^\infty(\R)$,
\begin{align}
\label{Lf-equation}
  \mathscr{L}f(x)=-af'(x)+\frac{\sigma^2}{2}f''(x)+\int_\R \Big(f(x+y)-f(x)-yf'(x)1_{\{|y|< 1\}}\Big)\,\Pi(dy)
\end{align}
(see, e.g., Theorem 31.5 of \cite{Sato}). It is well known that the infinitesimal generator $\mathscr{L}$ is given by $\Delta/2$ in the case of Brownian motion, whereas for a symmetric $\alpha$-stable process it is given by $-(-\Delta)^{\frac{\alpha}{2}}$, the fractional Laplacian.

Throughout this paper, we impose the following hypothesis.

\begin{Hyp}
\label{Hyp}
The process $(X,\P_0)$ is recurrent and the following condition holds:
\begin{align}
\int_0^\infty \left|\frac{1}{q+\Psi(\lambda)}\right|\,d\lambda<\infty\qquad \text{for}\ q>0 \tag{\textbf{A}}
\end{align}
\end{Hyp}

Note that under this condition, we have
\begin{align}
\label{Psi-zero}
\{\lambda:\ \Psi(\lambda)=0\}=\{0\}.
\end{align}
The proof is deferred to Appendix \ref{Appendix}. Under Hypothesis \ref{Hyp}, for $q>0$, it is known that $X$ has a bounded continuous resolvent density $r_q$ which satisfies
\begin{align}
  U^qf(x)=\int_\R f(y)r_q(y-x)\,dy\qquad \text{for}\ f\in L^\infty(\R)\ \text{and}\ x\in \R
\end{align}
(see, e.g., Theorems II.16 and II.19 of \cite{Bertoin}). The resolvent density can be expressed as
\begin{align}
\label{rq-integral}
r_q(x)=\frac{1}{\pi}\int_0^\infty \Re \left(\frac{e^{-i\lambda x}}{q+\Psi(\lambda)}\right)\,d\lambda \qquad \text{for}\ x\in \R\ \text{and}\ q>0,
\end{align}
(see, e.g., Lemma 2 \cite{Winkel} and Corollary 15.1 of \cite{Tsukada}). Moreover, the following asymptotic behaviors are known:
\begin{align}
\label{rq-lim}
  \lim_{q\to 0+}r_q(0)=\infty,\qquad \lim_{q\to 0+}qr_q(0)=0,
\end{align}
(see, e.g., Theorem 37.5 of \cite{Sato} and Lemma 15.5 of \cite{Tsukada}).

Under our hypothesis, a local time at $y\in \R$ exists. We denote it by $(L_t^y)_{t\ge 0}$. We may and do assume that $L^y$ is normalized to satisfy
\begin{align}
\P_x\left[\int_0^\infty e^{-qt}\,dL_t^y\right]=r_q(y-x)\qquad \text{for}\ x\in \R\ \text{and}\ q>0,
\end{align}
(see, e.g., Section V of \cite{Bertoin}). It is known that $L^y$ is a positive continuous additive functional. Moreover, for every finite positive measure $\mu$, the process $(\int_\R L_t^y\,\mu(dy))_{t\ge 0}$ is a positive continuous additive functional (see, e.g., Theorem V.5 of \cite{Bertoin}). Thus, for such $\mu$, the process (\ref{weight}) is a continuous multiplicative functional.

\subsection{Renormalized Zero Resolvent}
We define
\begin{align}
\label{hq-def}
  h_q(x):=r_q(0)-r_q(-x)=\frac{1}{\pi}\int_0^\infty \Re \left(\frac{1-e^{i\lambda x}}{q+\Psi(\lambda)}\right)\,d\lambda\qquad \text{for}\ x\in \R,
\end{align}
where the second equality follows from (\ref{rq-integral}). Note that this function $h_q$ is non-negative. For any $x\in \R$, the limit
\begin{align}
  h(x):=\lim_{q\to 0+}h_q(x)
\end{align}
exists and is finite. Moreover, this convergence is uniform on compact sets (see Theorem 1.1 of Takeda--Yano \cite{Takeda-Yano}). We call this limit function $h$ the \emph{renormalized zero resolvent}. It is known that $h$ is non-negative, continuous, and subadditive, that is,
\begin{align}
  h(x+y)\le h(x)+h(y)\qquad \text{for}\ x,y\in \R,
\end{align}
(see Theorem 1.1 of \cite{Takeda-Yano}). Moreover, the following asymptotic behaviors for $h$ are known:
\begin{align}
  \lim_{x\to \pm \infty}&\frac{h(x)}{|x|}=\frac{1}{\P_0[X_1^2]},\\
  \label{h-limit2}
  \lim_{x\to \pm \infty}&\Big(h(x+y)-h(x)\Big)=\pm \frac{y}{\P_0[X_1^2]}\qquad  \text{for}\ y\in \R,
\end{align}
where we use the convention that $1/\infty=0$ (see Theorem 1.2 of \cite{Takeda-Yano}).

The explicit form of the renormalized zero resolvent is known for several specific \Levy\ processes. For example, if $X$ is standard Brownian motion, then
\begin{align}
  h(x)=|x|,
\end{align}
(see, e.g., Example 5.1 of \cite{Takeda-Yano}). If $X$ is a recurrent $\alpha$-stable process, then
\begin{align}
  h(x)=\frac{1}{K(\alpha,\beta)}(1-\beta\,\mathrm{sgn}(x))|x|^{\alpha-1},
\end{align}
where $K(\alpha,\beta)>0$ and $\beta\in [-1,1]$ are constants (see Section 5 of Yano \cite{Yano}). If $X$ is a spectrally negative \Levy\ process, then
\begin{align}
  h(x)=W(x)-\frac{x}{\P_0[X_1^2]},
\end{align}
where $W$ denotes the zero-scale function of $X$ (see Example 5.2 of Pant\'{i} \cite{Panti}).

It is known that the renormalized zero resolvent $h$ is a harmonic function for the process killed upon hitting $0$, that is,
\begin{align}
h(x)=\P_x\Big[h(X_t),\ t<T_0\Big]\qquad \text{for}\ x\in \R\ \text{and}\ t\ge 0,
\end{align}
where $T_0$ denotes the first hitting time of $0$ (see Theorem 2.2 of \cite{Panti}, or see also Theorem 1.1 of Yano \cite{Yano-Exc}). This result can be extended to the case of a compact set $K$. We define the function
\begin{align}
  \varphi_K(x):=\lim_{q\to 0+}r_q(0)\P_x(\bm{e}_q<T_K),
\end{align}
where $T_K$ denotes the first hitting time of $K$ and $\bm{e}_q$ denotes an exponential random variable with parameter $q>0$ that is independent of the process $X$. It is known that $\varphi_K$ is a harmonic function for the process killed upon hitting $K$ (see Theorem 1.3 of Iba \cite{Iba-conditioning}). Moreover, this function $\varphi_K$ admits the following integral representation involving the renormalized zero resolvent $h$; there exists a unique probability measure $\mu_K$ supported on $K$, together with a unique non-negative constant $k(K)$ such that
\begin{align}
\label{Iba-Rivero}
\varphi_K(x)=\int_K h(x-y)\mu_K(dy)-k(K)\qquad \text{for}\ x\in \R,
\end{align}
(see Theorem 1.2 of Iba--Rivero \cite{Iba-Rivero}). The measure $\mu_K$ is called the \emph{equilibrium measure} and $k(K)$ is called the \emph{Robin constant}.

\subsection{Main Results}
We assume that $\mu$ is a nonzero finite positive measure with compact support. Let $K:=\mathrm{supp}(\mu)$. Then we consider the following weight:
\begin{align}
\Gamma_t^\mu :=\exp \left(-\int_K L_t^y\,\mu(dy)\right)\qquad \text{for}\ t\ge 0.
\end{align}

Let $C(K)$ denote the space of continuous functions on $K$, equipped with the max norm. We define an operator $T^\mu:C(K)\to C(K)$ as follows:
\begin{align}
T^\mu f(x):=\int_K f(y)h(x-y)\,\mu(dy)\qquad \text{for}\ f\in C(K).
\end{align}

In this setting, our main result on the penalization problem associated with the weight $\Gamma^\mu$ is as follows:

\begin{thm}
\label{mainthm1}
There exists a function $\varphi^\mu$ such that
\begin{align}
\label{tsuika-lim}
\varphi^\mu(x)=\lim_{q\to 0+}r_q(0)\P_x\Big[\Gamma_{\bm{e}_q}^\mu\Big]\qquad \text{for}\ x\in \R
\end{align}
and
 \begin{align}
 \label{main-penallim}
   \lim_{q\to 0+}\frac{\P_x[F_t\cdot \Gamma_{\bm{e}_q}^\mu]}{\P_x[\Gamma_{\bm{e}_q}^\mu]}=\P_x\left[F_t\cdot\frac{\varphi^\mu(X_t)\Gamma_t^\mu}{\varphi^\mu(x)} \right]\qquad \text{for}\ t\ge0,\ F_t\in b\F_t.
 \end{align}
Moreover, the function $\varphi^\mu$ satisfies
 \begin{align}
   \varphi^\mu(x)=C^\mu +\int_K \varphi^\mu(y)h(x-y)\,\mu(dy)>0\qquad \text{for}\ x\in \R
 \end{align}
 and the pair $(C^\mu,\varphi^\mu|_K)$ is the unique pair $(C,f)\in \R\times C(K)$ satisfying the Fredholm equation
 \begin{align}
 \label{Levy-Fredholm}
   (I-T^\mu)f=C\qquad \text{for}\ f\in C(K),
 \end{align}
 with the normalization condition
 \begin{align}
 \mu[f]:=\int_K f(y)\,\mu(dy)=1.
 \end{align}
In particular, if $\mu=c\mu_K$ for $c>0$, then we have
\begin{align}
\varphi^{c\mu_K}(x)=\varphi_K(x)+\frac{1}{c}\qquad \text{for}\ x\in \R.
\end{align}
\end{thm}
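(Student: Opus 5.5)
Our approach is to derive a renewal-type identity for $u_q(x):=\P_x[\Gamma^\mu_{\bm{e}_q}]$ and pass to the limit $q\to0+$ after multiplying by $r_q(0)$. Write $A_t:=\int_K L_t^y\,\mu(dy)$. Since $1-\Gamma_t^\mu=\int_0^t \Gamma_s^\mu\,dA_s$ and $\bm{e}_q$ is independent of $X$, the Markov property at $s$ gives
\begin{align}
1-u_q(x)=\P_x\left[\int_0^\infty e^{-qs}\Gamma^\mu_s\,dA_s\right]\cdot\text{(correction)},
\end{align}
and more precisely
\begin{align}
u_q(x)=1-\int_K r_q(y-x)\,u_q(y)\,\mu(dy),
\end{align}
using $\P_x[\int_0^\infty e^{-qs}f(X_s)\,dL^y_s]=f(y)r_q(y-x)$ and the identity $\P_x[\int_0^\infty e^{-qs}\Gamma_s^\mu\,dA_s\,\Gamma\text{-future}]$ with $u_q$ inserted at the time $s$.

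Next, substitute $r_q(y-x)=r_q(0)-h_q(x-y)$ and multiply by $r_q(0)$. Setting $\varphi_q:=r_q(0)u_q$ and $C_q:=r_q(0)\bigl(1-\mu[u_q]\,r_q(0)\bigr)$, we obtain
\begin{align}
\varphi_q(x)=C_q+\int_K \varphi_q(y)h_q(x-y)\,\mu(dy).
\end{align}

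We then argue compactness. By (\ref{rq-lim}), $u_q\to0$, so $\mu[\varphi_q]=\mu[u_q]r_q(0)$. Rewriting the identity as $u_q(x)=1-r_q(0)\mu[u_q]+\int_K u_q h_q\,d\mu$ and integrating against $\mu$ shows that $\mu[\varphi_q]$ stays bounded and bounded away from $0$, since $h_q\to h$ uniformly on the compact set $K-K$. Set $g_q:=\varphi_q/\mu[\varphi_q]$ on $K$. These functions satisfy $g_q=c_q+T^\mu_q g_q$ with $\mu[g_q]=1$. The family $\{T^\mu_q g_q\}$ is equicontinuous, because $h$ is continuous and the convergence is uniform. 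Since $g_q\ge0$ and $\mu[g_q]=1$, the constants $c_q$ are bounded, and Arzel\`a--Ascoli extracts a subsequential limit $(C,f)$ solving (\ref{Levy-Fredholm}) with $\mu[f]=1$.

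Uniqueness of this pair is the main obstacle, and it yields convergence of the whole family. The difference $g$ of two solutions satisfies $(I-T^\mu)g=c$ with $\mu[g]=0$. To show $g=0$ I would use a positive-definiteness property of $-h$ on signed measures of total mass zero. Fourier-wise,
\begin{align}
-\iint h(x-y)\,\nu(dx)\nu(dy)=\frac1\pi\int_0^\infty \Re\frac{|\hat\nu(\lambda)|^2}{\Psi(\lambda)}\,d\lambda\ge0
\end{align}
for $\nu(K)=0$, obtained as a $q\to0$ limit from (\ref{hq-def}) and using $\Re\Psi\ge0$. Apply this with $\nu=g\mu$. Then
\begin{align}
\int g^2\,d\mu=\int g\,T^\mu g\,d\mu\le0,
\end{align}
so $g=0$ $\mu$-a.e. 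The Fredholm equation then gives $g=c$ on $K$, and $\mu[g]=0$ forces $c=0$. The delicate point is controlling the Fourier integral near $\lambda=0$ using (\ref{Psi-zero}) and recurrence.

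With uniqueness in hand, extend $\varphi^\mu$ to all of $\R$ by the displayed formula. The pointwise limit (\ref{tsuika-lim}) follows, with the normalization fixed by $\lim\mu[\varphi_q]$. That limit is in turn identified by the limiting equation, which the Fredholm system determines up to scaling; this gives $C^\mu$. Positivity of $\varphi^\mu$ comes from $C^\mu>0$ (integrate against $\mu$ and use the positive-definiteness again) together with $h\ge0$.

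For (\ref{main-penallim}), apply the Markov property at $t$ on $\{\bm{e}_q>t\}$. This gives
\begin{align}
\P_x[F_t\Gamma^\mu_{\bm{e}_q}]=e^{-qt}\,\P_x\bigl[F_t\Gamma^\mu_t u_q(X_t)\bigr]+O\bigl(1-e^{-qt}\bigr).
\end{align}
Multiply by $r_q(0)$, using $qr_q(0)\to0$, and use dominated convergence. Domination comes from $\varphi_q(z)\le C+\sup_K\varphi_q\cdot\mu(K)\,(h(z)+\text{const})$ via subadditivity of $h$, together with integrability of $h(X_t)$, which follows from harmonicity-type bounds. Finally, when $\mu=c\mu_K$, the function $f=\varphi_K|_K+1/c$ is constant on $K$ (since $\varphi_K=0$ on $K$). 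By (\ref{Iba-Rivero}) it satisfies the Fredholm equation with $C=k(K)+1/c$, up to normalization, so uniqueness yields the stated identity.
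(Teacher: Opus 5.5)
\textbf{Gap: positivity of $\varphi^\mu$.} Your positivity step fails. This matters beyond the claim $\varphi^\mu>0$ itself, because the ratio in (\ref{main-penallim}) also needs $\varphi^\mu(x)>0$.

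You assert $C^\mu>0$ by integrating against $\mu$ and using positive-definiteness. Integrating gives $C^\mu\mu(K)=1-\int_K\int_K f(y)h(x-y)\,\mu(dx)\mu(dy)$. The energy inequality only applies to signed measures of total mass zero, whereas $f\cdot\mu$ has mass $1$, so nothing controls this sign.

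In fact $C^\mu$ can be negative. For Brownian motion ($h(x)=|x|$) and $\mu=\delta_{-1}+\delta_1$, take $f\equiv\frac12$ on $K=\{-1,1\}$ and $C=-\frac12$. This pair satisfies $(I-T^\mu)f=C$ and $\mu[f]=1$, so $C^\mu=-\frac12$ and $\varphi^\mu(x)=\frac12+(|x|-1)^+$.

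More generally, for $\mu=c\mu_K$ one has $T^{c\mu_K}(1/c)=\int_K h(\cdot-y)\,\mu_K(dy)=k(K)$ on $K$. Hence $C^{c\mu_K}=\frac1c-k(K)$, not $k(K)+\frac1c$ as you wrote; with your sign the extension formula would produce $\varphi_K+\frac1c+2k(K)$. This constant is negative when $ck(K)>1$ and zero when $ck(K)=1$. That is exactly why the paper must handle $C_q^\mu<0$ and the spectral case $C^\mu=0$.

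The missing idea is probabilistic, as in the paper:
\begin{enumerate}
\item $\varphi^\mu\ge0$ as a limit of $\varphi_q\ge0$.
\item Since $\mu[\varphi^\mu]=1$, there is $x_0\in K$ with $\varphi^\mu(x_0)>0$.
\item By recurrence $T_{x_0}<\infty$ a.s., and the strong Markov property at $T_{x_0}$ gives $u_q(x)\ge\P_x[e^{-qT_{x_0}}\Gamma^\mu_{T_{x_0}}]\,u_q(x_0)$.
\item Letting $q\to0+$ after multiplying by $r_q(0)$ yields $\varphi^\mu(x)\ge\P_x[\Gamma^\mu_{T_{x_0}}]\,\varphi^\mu(x_0)>0$.
\end{enumerate}

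\textbf{The rest.} The remainder is sound, and in one respect leaner than the paper. The paper uses the Fredholm alternative with a case split on whether $1$ lies in the spectrum of $T^\mu$. You instead get existence from Arzel\`a--Ascoli applied to the normalized $g_q$, and uniqueness by applying the energy inequality to the difference of two solutions.

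That inequality has no delicacy near $\lambda=0$, contrary to what you feared. For fixed $q$,
$\int_K\int_K h_q(x-y)\,\nu(dx)\nu(dy)=-\frac1\pi\int_0^\infty|\widehat{\nu}(\lambda)|^2\,\Re\bigl(1/(q+\Psi(\lambda))\bigr)\,d\lambda\le0$,
with Fubini justified by condition (\textbf{A}). You then let $q\to0+$ using uniform convergence of $h_q$ on $K-K$.

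Two smaller repairs are needed:
\begin{itemize}
\item The normalization follows directly from $\mu[\varphi_q]c_q=C_q=r_q(0)(1-\mu[\varphi_q])$, i.e.\ $\mu[\varphi_q]=r_q(0)/(r_q(0)+c_q)\to1$. Your sentence about the limit being identified by the limiting equation is not the right justification.
\item Your domination bound has $h$ where $h_q$ actually occurs. You need a $q$-uniform majorant, such as $h_q(z)\le\frac2\pi\int_0^\infty\frac{1-\cos(\lambda z)}{|\Psi(\lambda)|}\,d\lambda$ as in the proof of Proposition \ref{prop4.5}. This function has finite $\P_x$-moment at time $t$.
\end{itemize}
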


The proof of this theorem is given in Section \ref{S3}.

\begin{Rem}
For Brownian motion (Theorem \ref{Brown-penal-thm}) and stable processes (Theorem \ref{Stable-penal-thm}), the penalization limits are formulated by letting the deterministic time $t\to\infty$. In contrast, our main result concerns a limit along the random times $\bm{e}_q$ as $q\to 0+$, and the two formulations may therefore appear different at first sight. Appendix \ref{Appendix2} shows more generally that, whenever $q\mapsto r_q(0)$ is regularly varying at $0+$ with index $-\rho$ for some $\rho\in(0,1)$, the same penalized process is obtained along deterministic times. This condition includes Brownian motion and recurrent symmetric stable processes.
\end{Rem}

The preceding theorem shows that the function arising from Kac killing penalization is characterized by a Fredholm equation. This characterization, however, appears substantially different from equation (\ref{Brown-SL}) in the Brownian setting. We therefore proceed to reformulate (\ref{Levy-Fredholm}).

Let $(C_c^\infty(\R))'$ denote the continuous dual space of $C_c^\infty(\R)$ (see, e.g., Chapter 3 of \cite{Duistermaat-Kolk}). For $g\in {C_c^\infty(\R)}$ and $T\in (C_c^\infty(\R))'$, we denote by $\langle T,g\rangle$ the duality pairing between $(C_c^\infty(\R))'$ and $C_c^\infty(\R)$. It is known that $L^1(\R)$ and the space of Radon measures $\mathscr{M}(\R)$ can be naturally embedded into $(C_c^\infty(\R))'$. More precisely, for $f\in L^1(\R)$ and $\nu \in \mathscr{M}(\R)$, 
\begin{align}
\langle f,g\rangle :=\int_\R f(x)g(x)\,dx,\qquad \langle \nu,g\rangle :=\int_\R g(x)\,\nu(dx)
\end{align}
(see, e.g., Theorem 3.5 and Theorem 20.1 of \cite{Duistermaat-Kolk}). We will see in Lemma \ref{lem4.1} that $\mathscr{L}f$ belongs to $L^1(\R)$ for $f\in C_c^\infty(\R)$. Then, it is easy to see that
\begin{align}
\label{def-dual}
\langle \mathscr{L}f,g\rangle =\int_\R f(x)\widehat{\mathscr{L}}g(x)\,dx\qquad \text{for}\ f,g\in C_c^\infty(\R),
\end{align}
where $\widehat{\mathscr{L}}$ denotes the infinitesimal generator of the dual \Levy\ process $\widehat{X}$, that is, we have
\begin{align}
\widehat{\mathscr{L}}g(x)=ag'(x)+\frac{\sigma^2}{2}g''(x)+\int_\R \Big(g(x+y)-g(x)-yg'(x)1_{\{|y|< 1\}}\Big)\,\widehat{\Pi}(dy).
\end{align}
Motivated by (\ref{def-dual}), we extend the action of the operator $\mathscr{L}$ to the class
\begin{align}
\mathscr{A}:=\Big\{f:\ f\ \text{is\ measurable and}\ f\cdot \widehat{\mathscr{L}}g\in L^1(\R)\ \text{for}\ g\in C_c^\infty(\R)\Big\}
\end{align}
so that, for $f\in \mathscr{A}$, we define $\mathscr{L} f\in (C_c^\infty(\R))'$ by
\begin{align}
\label{Def-exL}
\langle \mathscr{L}f,g\rangle:=\int_\R f(x)\widehat{\mathscr{L}}g(x)\,dx\qquad \text{for}\ g\in C_c^\infty(\R).
\end{align}
Note that Lemma \ref{lem4.1} implies $1\in \mathscr{A}$ and Lemma \ref{lem4.3} implies $h\in \mathscr{A}$. Thus, we have
\begin{align}
  \mathscr{A}_0:=\left\{f:\ f\ \text{is measurable and }\frac{|f(x)|}{1+h(x)}\ \text{is bounded on $\R$}\right\}\subset \mathscr{A}.
\end{align}
The extension of $\mathscr{L}$ to $L^\infty(\R)\subset \mathscr{A}_0$ was discussed by Berger--Schilling \cite{Berger-Schilling}, while Grzywny--Kwa\'{s}nicki \cite{Grzywny-Kwasnicki} studied the Liouville theorem for $\mathscr{L}$.

With these conventions, we obtain the following equivalence between the two characterizations of $\varphi^\mu$.

\begin{thm}
\label{mainthm2}
Let $f\in C(\R)$. The following assertions are equivalent:
\begin{enumerate}
\item There exists a constant $C$ such that the function $f$ satisfies the Fredholm equation
\begin{align}
\label{main-Fredeq}
f(x)
=C+\int_K f(y)h(x-y)\,\mu(dy)\qquad \text{for}\ x\in \R,
\end{align}
with the normalization condition
\begin{align}
\label{main-Frednorm}
  \mu[f]=1.
\end{align}
\item The function $f$ satisfies the growth condition
\begin{align}
\label{main-SLgrowth}
f-h\in C_b(\R)
\end{align}
(which implies $f\in \mathscr{A}_0$) and the Sturm--Liouville equation
\begin{align}
\label{main-SLeq}
\mathscr{L}f
=f\cdot \mu,
\end{align}
that is,
\begin{align}
\langle \mathscr{L} f,g\rangle =\int_\R g(x) f(x)\,\mu(dx)\qquad \text{for}\ g\in C_c^\infty(\R).
\end{align}
\end{enumerate}
If either, and hence both, of these conditions holds, then we have $f=\varphi^\mu.$
\end{thm}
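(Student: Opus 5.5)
The key analytic input is the distributional identity
\begin{align}
\mathscr{L}h=\delta_0-\frac{1}{\P_0[X_1^2]}\cdot 0=\delta_0,
\end{align}
or, more precisely, $\langle \mathscr{L}h(\cdot-y),g\rangle=g(y)$ for $g\in C_c^\infty(\R)$ and $y\in\R$. I expect this identity, combined with a Liouville-type uniqueness statement, to drive both directions.

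To prove the identity I would work at level $q>0$. For $g\in C_c^\infty(\R)$ we have $U^q(qI-\widehat{\mathscr{L}})g=g$, and the dual resolvent density is $r_q(x-y)$. Hence
\begin{align}
\int_\R r_q(y-x)(q-\widehat{\mathscr L})g(x)\,dx=g(y).
\end{align}
Writing $r_q(y-x)=r_q(0)-h_q(x-y)$ and using $\int\widehat{\mathscr L}g\,dx=0$ (Lemma \ref{lem4.1}), this gives
\begin{align}
\int h_q(x-y)\widehat{\mathscr L}g(x)\,dx=g(y)-q r_q(0)\int g+q\int h_q(x-y) g(x)\,dx .
\end{align}
Now let $q\to0+$. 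On the right-hand side, $qr_q(0)\to0$ by (\ref{rq-lim}), and the last term vanishes because $h_q\to h$ uniformly on compacts. The left-hand side requires passing to the limit against $\widehat{\mathscr L}g$, which is not compactly supported. I would dominate it using a bound $h_q\le C(1+h)$, or $h_q(x)\le 2h(x)$ via subadditivity-type estimates; this is the integrability furnished by Lemma \ref{lem4.3}. \textbf{This uniform domination step is the main obstacle.} If a direct bound fails, I would instead use the Fourier representation (\ref{hq-def}) with Fubini applied against $\widehat{g}$. Once the identity is established, Fubini over $\mu$ yields, for any $f\in C(K)$,
\begin{align}
\mathscr{L}\Big(C+\int_K f(y)h(\cdot-y)\mu(dy)\Big)=f\cdot\mu,
\end{align}
since $\mathscr L$ annihilates constants.

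\emph{Direction (1)$\Rightarrow$(2).} The Sturm--Liouville equation is immediate from the identity above. For the growth condition, write
\begin{align}
f-h=C+\int_K f(y)\big(h(x-y)-h(x)\big)\mu(dy)+(\mu[f]-1)h(x).
\end{align}
The last term vanishes by (\ref{main-Frednorm}). The middle term is continuous and bounded: by subadditivity, $|h(x-y)-h(x)|\le \max(h(-y),h(y))$, which is bounded for $y\in K$. Hence $f-h\in C_b(\R)$.

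\emph{Direction (2)$\Rightarrow$(1).} Let $\tilde f:=\int_K f(y)h(\cdot-y)\mu(dy)$. By the identity, $u:=f-\tilde f$ satisfies $\mathscr L u=0$ in the distributional sense. By the estimate from the previous direction, $\tilde f-\mu[f]h\in C_b$, and so $u-(1-\mu[f])h\in C_b(\R)$. I would then invoke a Liouville theorem (Grzywny--Kwa\'{s}nicki \cite{Grzywny-Kwasnicki}) stating that distributional $\mathscr L$-harmonic functions of growth $O(1+h)$ are of the form $a+bh$-type combinations. Since $\mathscr L h=\delta_0\neq0$, these must in fact be constants, or affine functions in the finite-variance case; the affine possibility is excluded by the asymptotics (\ref{h-limit2}). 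If no such theorem is directly applicable, I would prove it directly: mollify $u$ to get $u_\varepsilon$ with $\mathscr L u_\varepsilon=0$ pointwise, show that $u_\varepsilon(X_t)$ is a martingale, and use recurrence together with the growth bound to force $u_\varepsilon$ to be constant. Knowing $u$ is constant, the growth condition $u-(1-\mu[f])h$ bounded forces $\mu[f]=1$ whenever $h$ is unbounded. For recurrent processes satisfying (\textbf{A}), $h$ is indeed unbounded; otherwise I would rely on (\ref{Iba-Rivero})-type arguments. This yields (\ref{main-Fredeq}) and (\ref{main-Frednorm}).

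\emph{Identification with $\varphi^\mu$.} Finally, $f=\varphi^\mu$ follows from the uniqueness in Theorem \ref{mainthm1}. The restriction $(C,f|_K)$ solves the Fredholm problem on $K$, so $f|_K=\varphi^\mu|_K$ and $C=C^\mu$. The representation (\ref{main-Fredeq}) then determines $f$ on all of $\R$.
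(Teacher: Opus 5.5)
Your architecture matches the paper's: the fundamental-solution identity $\mathscr{L}h(\cdot-y)=\delta_y$, Fubini over $\mu$, then a Liouville argument. Your subadditivity bound $|h(x-y)-h(x)|\le\max\{h(y),h(-y)\}$ for the growth condition in (1)$\Rightarrow$(2) is fine, and slightly more direct than the paper's use of (\ref{h-limit2}). The identification with $\varphi^\mu$ via uniqueness on $K$ is also fine. There are, however, two gaps.

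\textbf{First gap: the limit $q\to0+$.} You flag this step as the main obstacle and leave it open. Neither $h_q\le C(1+h)$ nor $h_q\le 2h$ follows from subadditivity, and Lemma \ref{lem4.3} controls only $h$, not the family $(h_q)$. For non-symmetric $\Psi$ one can have $\Re(q+\Psi)^{-1}>\Re\Psi^{-1}$ (exactly when $(\Im\Psi)^2>q\Re\Psi+(\Re\Psi)^2$), so the symmetric-case comparison $h_q\le h$ is lost. The paper closes this with a $q$-independent majorant. Since $h_q\ge0$, $\Re\Psi\ge0$ and $|q+\Psi|\ge|\Psi|$,
\[ h_q(z)\le h_q(z)+h_q(-z)\le \frac{2}{\pi}\int_0^\infty \frac{1-\cos(z\lambda)}{|\Psi(\lambda)|}\,d\lambda=:H(z). \]
$H$ satisfies $H(a+b)\le 2H(a)+2H(b)$, so $2+H$ is submultiplicative. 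Together with $\P_0[H(X_t)]<\infty$ (proof of Theorem 15.2 of \cite{Tsukada}) and the argument of Lemma \ref{lem4.3}, this gives $H(\cdot-y)\,|\widehat{\mathscr{L}}g|\in L^1(\R)$, so dominated convergence applies. Separately, $\int\widehat{\mathscr{L}}g\,dx=0$ does not follow from Lemma \ref{lem4.1}. It needs, e.g., Lemma \ref{lem4.2} plus invariance of Lebesgue measure.

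\textbf{Second gap (more serious): (2)$\Rightarrow$(1).} The Liouville statement you invoke for harmonic functions of growth $O(1+h)$ is not a precise result. Its form, ``$a+bh$-type combinations'', cannot be right, since $h$ itself is not harmonic. The paper uses Grzywny--Kwa\'{s}nicki only under $\P_0[X_1^2]<\infty$ (Lemma \ref{lem4.12}). In the infinite-variance case, where $\Pi$ may lack a first moment, you have not identified a theorem covering unbounded $u$.

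Your fallback also fails. A martingale argument using recurrence plus a growth bound cannot force constancy: for Brownian motion, $u(x)=x$ is harmonic, of growth $O(1+h)$, and the process is recurrent.

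The missing idea is to reduce to \emph{bounded} harmonic functions.
\begin{enumerate}
\item Since $u-(1-\mu[f])h\in C_b(\R)$ and $h$ is subadditive, $\Delta_z u:=u(\cdot+z)-u$ is bounded and continuous.
\item $\langle\mathscr{L}\Delta_z u,g\rangle=\langle\mathscr{L}u,g(\cdot-z)-g\rangle=0$, so by (\ref{Psi-zero}) and the bounded Liouville theorem of Berger--Schilling \cite{Berger-Schilling}, $\Delta_z u\equiv A_z$.
\item $z\mapsto A_z$ is additive and locally bounded, so $A_z=az$ and $u(x)=ax+b$.
\item Adding the bounded quantities $u(\pm x)-(1-\mu[f])h(\pm x)$ shows that $2b-(1-\mu[f])\big(h(x)+h(-x)\big)$ is bounded.
\item Since $h(x)+h(-x)\to\infty$ (Lemma 3.10 of \cite{Panti}), this forces $\mu[f]=1$, then $a=0$, so $u$ is constant.
\end{enumerate}
This is exactly the content of Lemma \ref{lem4.8}, Proposition \ref{prop4.9}, and the proposition following it in the paper.
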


The proof of this theorem will be given in Section \ref{S4}.

We may rewrite the Sturm--Liouville equation (\ref{main-SLeq}) as
\begin{align}
  (-\mathscr{L}+\mu)f=0.
\end{align}
The operator $\mathscr{H}:=-\mathscr{L}+\mu$ may be called a \emph{Schr\"{o}dinger-type operator}. Thus, a solution $f$ of (\ref{main-SLeq}) may be regarded as a harmonic function associated with the Schr\"{o}dinger-type operator $\mathscr{H}$. In the case of Brownian motion, this theory is classical (see, e.g., \cite{Chung-Zhao}). Takeda \cite{Takeda-harmonic} studied the corresponding problem for recurrent symmetric stable processes.

If the process has a finite second moment, the growth condition associated with the Sturm--Liouville equation admits the following alternative characterization:

\begin{thm}
\label{mainthm3}
Assume that $\P_0[X_1^2]<\infty$. If a function $f\in C(\R)$ satisfies the Sturm--Liouville equation (\ref{main-SLeq}), then the following growth conditions are equivalent:
\begin{enumerate}
  \item[i.] (\ref{main-SLgrowth}) holds.
  \item[ii.] For every $z\in \R$,
  \begin{align}
  \label{Levy-growth}
    \lim_{x\to \pm \infty}\Big(f(x+z)-f(x)\Big)=\pm \frac{z}{\P_0[X_1^2]}.
  \end{align}
\end{enumerate}
\end{thm}

The proof of this theorem will be given in Section \ref{S4}.

\begin{Rem}
Suppose that $X$ is a standard Brownian motion. Then we have
\begin{align}
\mathscr{L}=\frac{\Delta}{2},\qquad h(x)=|x|,\qquad \mathbb{P}_0[X_1^2]=1.
\end{align}
Hence, the Sturm--Liouville equation in the characterization of Roynette--Vallois--Yor coincides with the equation (\ref{main-SLeq}). The corresponding growth conditions (\ref{Brown-growth}) and (\ref{Levy-growth}) are equivalent but differ by a multiplicative constant. This discrepancy reflects the different normalizations naturally associated with the two procedures: Theorem \ref{Brown-penal-thm} considers the deterministic-time limit $t\to\infty$, whereas Theorem \ref{mainthm1} considers the limit along an independent exponential clock $\bm{e}_q$ as $q\to 0+$. More precisely, if $\phi^\mu$ denotes the function in Theorem \ref{Brown-penal-thm} and $\varphi^\mu$ denotes that in Theorem \ref{mainthm1}, then
\begin{align}
\varphi^\mu (x)=\sqrt{\frac{\pi}{2}}\phi^\mu (x).
\end{align}
Nevertheless, the penalized law in (\ref{penal-lim}) depends on these functions only through the ratio
\begin{align}
  \frac{\phi^\mu(X_t)}{\phi^\mu(x)}=\frac{\varphi^\mu(X_t)}{\varphi^\mu(x)}.
\end{align}
The multiplicative constant therefore cancels, and the two penalization procedures yield the same penalized law.
\end{Rem}

Finally, we consider the process obtained as the penalization limit (\ref{main-penallim}).

We define $\F_\infty:=\sigma(\bigcup_{t\ge 0}\F_t)$. For $x\in \R$, let $\Q_x^\mu$ denote the \emph{penalized law} arising in (\ref{main-penallim}), that is, the measure $\Q_x^\mu$ on $\F_\infty$ satisfies
\begin{align}
\frac{d\Q_x^\mu}{d\P_x}\Big|_{\F_t}=\frac{\varphi^\mu(X_t)\cdot\Gamma_t^\mu}{\varphi^\mu(x)}\qquad \text{for}\ t\ge 0.
\end{align}
For the existence of such a measure, see, e.g., Theorem A.1 of \cite{Yano-penal}. Its transition semigroup $Q_t^\mu$ is given by
\begin{align}
Q_t^\mu f(x):=\frac{1}{\varphi^\mu(x)}\P_x\Big[f(X_t)\cdot\varphi^\mu(X_t)\Gamma_t^\mu\Big]\qquad \text{for}\ f\in L^\infty(\R).
\end{align}
This is the Doob h-transform of the Feynman--Kac subprocess with respect to $\varphi^\mu$. Thus, under $\Q_x^\mu$, the process is a strong Markov process (see, e.g., Proposition 3.12 of \cite{B-G} and Theorem 11.9 of \cite{C-W}).

When the original \Levy\ process admits transition densities, the penalized process can be represented as follows:

\begin{thm}
\label{main-thm4}
Assume that $(X,\mathbb{P}_x)$ admits transition densities $p_u(\cdot)$ for $u>0$. For $x\in \R$, we then have
\begin{align}
  \Q_x^\mu=\frac{1}{\varphi^\mu(x)}\int_0^\infty \Big((\Gamma_u^\mu\cdot\P_{x,0}^u)\bullet (\Gamma_\infty^\mu\cdot\P_0^{(0)})\Big)p_u(-x)du+\frac{h(x)\cdot \Gamma_\infty^\mu}{\varphi^\mu(x)}\cdot\P_x^{(0)},
\end{align}
where $\mathbb{P}_{x,0}^u$ denotes the \Levy\ bridge from $X_0=x$ to $X_u=0$, $\mathbb{P}_x^{(0)}$ denotes the law of the \Levy\ process conditioned to avoid zero, and the symbol $\bullet$ denotes concatenation.
\end{thm}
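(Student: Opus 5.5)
We split $\Q_x^\mu$ according to whether the canonical path ever hits $0$. On $\F_t$ the Radon--Nikodym density is $\varphi^\mu(X_t)\Gamma_t^\mu/\varphi^\mu(x)$, so it suffices to show that for every $t\ge 0$ and $F_t\in b\F_t$,
\begin{align}
\P_x\Big[F_t\,\varphi^\mu(X_t)\Gamma_t^\mu\Big]
&=\P_x\Big[F_t\,\varphi^\mu(X_t)\Gamma_t^\mu;\ T_0\le t\Big]+\P_x\Big[F_t\,\varphi^\mu(X_t)\Gamma_t^\mu;\ t<T_0\Big]
\end{align}
coincides with the same functional evaluated under the right-hand measure. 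We then conclude by uniqueness of extension from $\bigcup_t\F_t$ to $\F_\infty$. Note that $\Gamma_\infty^\mu$ is well defined as a non-increasing limit.

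\textbf{Second term.} We use the decomposition $\varphi^\mu(y)=h(y)+(\varphi^\mu(y)-h(y))$, where $\varphi^\mu-h\in C_b(\R)$ by Theorem \ref{mainthm2}. The key identity is that the measure $\frac{h(x)\Gamma_\infty^\mu}{\varphi^\mu(x)}\P_x^{(0)}$ restricted to $\F_t$ equals $\frac{1}{\varphi^\mu(x)}\P_x[\,\cdot\,h(X_t)\Gamma_t^\mu\,\P_{X_t}^{(0)}[\Gamma_\infty^\mu]/\cdots]$. More precisely, $\P_x^{(0)}$ is the $h$-transform of the process killed at $T_0$, and by the Markov property $\Gamma_\infty^\mu=\Gamma_t^\mu\cdot\Gamma_\infty^\mu\circ\theta_t$. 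This gives
\begin{align}
h(x)\P_x^{(0)}\big[F_t\Gamma_\infty^\mu\big]=\P_x\Big[F_t\Gamma_t^\mu\,h(X_t)\P_{X_t}^{(0)}[\Gamma_\infty^\mu];\ t<T_0\Big].
\end{align}
We will need a lemma stating that for $y\ne 0$,
\begin{align}
h(y)\P_y^{(0)}[\Gamma_\infty^\mu]=\varphi^\mu(y)-\P_y\big[\Gamma_{T_0}^\mu\big]\varphi^\mu(0),
\end{align}
i.e.\ a last-exit/first-hit decomposition of $\varphi^\mu$ at $0$. To prove it, decompose $r_q(0)\P_y[\Gamma_{\bm e_q}^\mu]$ on $\{\bm e_q<T_0\}$ and $\{T_0\le \bm e_q\}$, use the strong Markov property at $T_0$ and Theorem \ref{mainthm1}, and identify
\[
\lim_{q\to0+} r_q(0)\P_y[\Gamma^\mu_{\bm e_q};\bm e_q<T_0]=h(y)\P_y^{(0)}[\Gamma_\infty^\mu]
\]
via $r_q(0)\P_y(\bm e_q<T_0)\to h(y)$ and the construction of $\P^{(0)}$ as that limit (Takeda--Yano / Yano). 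With this lemma, the $\{t<T_0\}$ part of the left side equals the $\P^{(0)}$ part plus
\[
\varphi^\mu(0)\,\P_x\big[F_t\Gamma_t^\mu\,\P_{X_t}[\Gamma^\mu_{T_0}];\,t<T_0\big].
\]

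\textbf{Bridge term.} We show that the bridge integral restricted to $\F_t$ equals
\[
\varphi^\mu(0)\,\P_x\big[F_t\Gamma_{T_0}^\mu\ \text{and paths whose last zero}\ldots\big].
\]
Concretely, we use $\P_0^{(0)}$ paired with the excursion/last-exit decomposition: under $\P_x$ together with an independent clock, the last zero $g$ before $\bm e_q$ has density $p_u(-x)$ modulo the factor $q\,r_q(0)$. The post-$g$ piece converges to $\P_0^{(0)}$ weighted by $\Gamma_\infty^\mu$, and the pre-$g$ piece is the bridge $\P_{x,0}^u$. Thus
\begin{align}
\varphi^\mu(x)=\lim_{q\to0+}r_q(0)\P_x[\Gamma_{\bm e_q}^\mu]=\int_0^\infty \P_{x,0}^u[\Gamma_u^\mu]\,p_u(-x)\,\P_0^{(0)}[\Gamma_\infty^\mu]\,du+h(x)\P_x^{(0)}[\Gamma_\infty^\mu].
\end{align}
Applying the same argument with $F_t$ inserted (for $u>t$ the bridge functional of $F_t$ is handled by the bridge Markov property, and for $u\le t$ the functional depends on the post-$u$ piece under $\P_0^{(0)}$) yields the full identity. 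The constant $\P_0^{(0)}[\Gamma_\infty^\mu]$ plays the role of $\varphi^\mu(0)$ up to the normalization $n(\cdot)$ of the excursion measure.

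\textbf{Main obstacle.} The hardest step is rigorously passing to the limit $q\to0+$ in the last-exit decomposition with $F_t$ inserted, in particular justifying the convergence
\[
q\,r_q(0)\cdot(\text{post-}g\text{ law})\to\Gamma^\mu_\infty\cdot\P_0^{(0)}.
\]
For this I would first verify the total-mass identity above, which holds for $F_t\equiv1$ by Theorem \ref{mainthm1}. Then, for general $F_t$, I would use monotone convergence in $u$ together with the already-proven penalization limit (\ref{main-penallim}), so that both sides are finite measures on $\F_t$ of equal total mass agreeing on a generating class.
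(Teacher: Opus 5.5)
Your route is different from the paper's. You compute $\lim_{q\to0+}r_q(0)\P_x[F_t\Gamma^\mu_{\bm e_q}]$ directly, by decomposing at $T_0$ and at the last zero before $\bm e_q$, and then match the result with Proposition~\ref{prop3.4}. The paper never does this computation. It removes the weight, $\mathscr{P}^\mu_x:=\varphi^\mu(x)\Q^\mu_x/\Gamma^\mu_\infty$, and shows $\mathscr{P}^\mu_x=\mathscr{P}_x$ (the $\mu=\delta_0$ object) by Yano's universality theorem (Theorem 4.1 of \cite{Yano-penal}). The hypotheses of that theorem reduce to $\varphi^\mu(X_t)/(1+h(X_t))\to1$ under both penalized laws, which follows from $\varphi^\mu-h\in C_b(\R)$ and $h(X_t)\to\infty$. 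The formula is then read off from Takeda--Yano's description of $\mathscr{P}_x$ and the multiplicativity of $\Gamma^\mu$.

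Your route is viable in principle, but it has a genuine gap at exactly the step you flag. Let $n$ be the excursion measure away from $0$, so that $r_q(0)\,n(\bm e_q<\zeta)=1$. (The last zero before $\bm e_q$ has density $e^{-qu}p_u(-x)/r_q(0)$; there is no factor $qr_q(0)$.) For $G\in b\F_s$ you need
\[
r_q(0)\P_y[G\,\Gamma^\mu_{\bm e_q};\,\bm e_q<T_0]\to h(y)\P^{(0)}_y[G\,\Gamma^\mu_\infty],\qquad r_q(0)\,n[G\,\Gamma^\mu_{\bm e_q};\,\bm e_q<\zeta]\to \P^{(0)}_0[G\,\Gamma^\mu_\infty].
\]

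The construction of $\P^{(0)}$ as a limit only controls functionals in $b\F_s$ with $s$ fixed. By contrast, $\Gamma^\mu_{\bm e_q}$ depends on the path up to $\bm e_q\to\infty$. Monotonicity of $\Gamma^\mu$ gives only the $\limsup\le$ half; the reverse inequality is a no-loss-of-mass statement.

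Your proposed remedy does not supply it. Theorem~\ref{mainthm1} says the whole quantity $r_q(0)\P_x[\Gamma^\mu_{\bm e_q}]$ tends to $\varphi^\mu(x)$. It does not say this limit equals
\[
\int_0^\infty\P^u_{x,0}[\Gamma^\mu_u]p_u(-x)\,du\cdot\P^{(0)}_0[\Gamma^\mu_\infty]+h(x)\P^{(0)}_x[\Gamma^\mu_\infty].
\]
That total-mass identity is precisely the displayed convergence with $G\equiv1$. Likewise, "agreement on a generating class" is the theorem itself. Given the total-mass identity, the $\limsup$ bound applied to $G$ and to $1-G$ would finish the argument, so that identity is the whole crux. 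Your lemma for $h(y)\P^{(0)}_y[\Gamma^\mu_\infty]$ is the first convergence with $G\equiv1$, and it is unproved for the same reason.

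The missing ingredient is one you announce but never use: $\varphi^\mu-h\in C_b(\R)$, combined with $h(X_s)\to\infty$ $\P^{(0)}_y$-a.s. Here is how it closes the gap.

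First, put $\psi_q(y):=r_q(0)\P_y[\Gamma^\mu_{\bm e_q};\bm e_q<T_0]$. By the strong Markov property at $T_0$,
\[
\psi_q(y)=\varphi^\mu_q(y)-\P_y[e^{-qT_0}\Gamma^\mu_{T_0}]\varphi^\mu_q(0).
\]
Hence $0\le\psi_q\le h_q$, and $\psi_q\to\psi:=\varphi^\mu-\P_\cdot[\Gamma^\mu_{T_0}]\varphi^\mu(0)$.

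Second, the Markov property at a fixed time $s$ gives $r_q(0)\P_y[G\Gamma^\mu_{\bm e_q};s<\bm e_q<T_0]=e^{-qs}\P_y[G\Gamma^\mu_s\psi_q(X_s);s<T_0]$, and the same holds under $n$. The contribution of $\{\bm e_q\le s\}$ is $O(qr_q(0))$. Since $\P_y[h_q(X_s);s<T_0]\to h(y)=\P_y[h(X_s);s<T_0]$, generalized dominated convergence lets you pass to $q\to0+$.

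Third, everything then reduces to $\psi=h\,\P^{(0)}_\cdot[\Gamma^\mu_\infty]$. Let $s\to\infty$ in $\psi(y)=h(y)\P^{(0)}_y[\Gamma^\mu_s\,\psi(X_s)/h(X_s)]$. This works because $0\le\psi/h\le1$, $\psi-h$ is bounded, and $h(X_s)\to\infty$. The $n$-version follows because $\P^{(0)}_0$ is the $h$-transform of $n$.

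With these two limits, your last-exit computation goes through, with the $u$-integral dominated by $\P_x[\int_0^\infty\Gamma^\mu_u\,dL^0_u]<\infty$. That gives a self-contained excursion-theoretic proof. The paper instead hands exactly this limit interchange to Yano's abstract theorem together with the known $\delta_0$ case.
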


The proof of this theorem is given in Section \ref{S5}.

This theorem provides an intuitive description of the sample-path behavior under $\mathbb{Q}_x^\mu$. Roughly speaking, the process evolves as a \Levy\ process while being weighted so as to avoid the regions to which $\mu$ assigns mass. After its last visit to zero, it moves away from zero forever as a \Levy\ process conditioned to avoid zero; alternatively, it may behave as such a conditioned \Levy\ process from the outset.

\subsection{Organization}
The remainder of this paper is organized as follows. In Section \ref{S2}, we prove that the Fredholm equation (\ref{Levy-Fredholm}) on the compact set $K$ admits a unique solution. In Section \ref{S3}, we establish the penalization limit (\ref{main-penallim}), thereby completing the proof of Theorem \ref{mainthm1}. In Section \ref{S4}, we prove Theorems \ref{mainthm2} and \ref{mainthm3}. More precisely, in Subsection \ref{S4.1}, we derive the Sturm--Liouville equation from the Fredholm equation, while in Subsection \ref{S4.2}, we prove the converse implication. In Subsection \ref{S4.3}, we prove Theorem \ref{mainthm3}. In Section \ref{S5}, we present several properties of the penalized process arising from (\ref{main-penallim}) and prove Theorem \ref{main-thm4}. Finally, Appendix \ref{Appendix} contains the proofs of the lemmas omitted from the main text, while Appendix \ref{Appendix2} establishes the constant clock penalization limit.


\section{Fredholm Equation}
\label{S2}
For $q>0$, we first consider the function
\begin{align}
\varphi_q^\mu(x):= r_q(0)\P_x\Big[\Gamma_{\bm{e}_q}^\mu\Big]\qquad \text{for}\ x\in \R.
\end{align}
We define an operator $T_q^\mu :C(K)\to C(K)$ by
\begin{align}
T_q^\mu f(x)&:=\int_K f(y)h_q(x-y)\,\mu(dy)\qquad \text{for}\ f\in C(K).
\end{align}

In this case, the function $\varphi_q^\mu$ also satisfies a Fredholm equation similar to (\ref{Levy-Fredholm}).

\begin{lem}
The function $\varphi_q^\mu(x)$ is continuous on $\R$. Moreover, there exists a constant $C_q^\mu$ such that $\varphi_q^\mu|_K$ satisfies the following equation:
\begin{align}
\label{phiq-equation}
(I-T_q^\mu)(\varphi_q^\mu|_K)=C_q^\mu,\qquad \mu[\varphi_q^\mu|_K]+\frac{C_q^\mu}{r_q(0)}=1.
\end{align}
\end{lem}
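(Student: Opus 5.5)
The approach is the standard Feynman--Kac resolvent identity. Write $A_t:=\int_K L_t^y\,\mu(dy)$, so that $\Gamma_t^\mu=e^{-A_t}$. Since $\bm{e}_q$ is independent of $X$,
\begin{align}
\P_x\Big[\Gamma_{\bm{e}_q}^\mu\Big]=\P_x\left[\int_0^\infty qe^{-qt}e^{-A_t}\,dt\right].
\end{align}
For a continuous additive functional, the pathwise identity $1-e^{-A_t}=\int_0^t e^{-A_s}\,dA_s$ holds. Multiplying by $qe^{-qt}$ and integrating in $t$ (Fubini) gives
\begin{align}
1-\P_x\Big[\Gamma_{\bm{e}_q}^\mu\Big]=\P_x\left[\int_0^\infty e^{-qs}e^{-A_s}\,dA_s\right].
\end{align}
At time $s$ the measure $dA_s$ charges only instants where $X_s\in K$. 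The Markov property (the standard resolvent identity for additive functionals) should then rewrite the right-hand side as
\begin{align}
\P_x\left[\int_0^\infty e^{-qs}\,\P_{X_s}\big[\Gamma_{\bm{e}_q}^\mu\big]\,dA_s\right]=\int_K r_q(y-x)\,\P_y\big[\Gamma_{\bm{e}_q}^\mu\big]\,\mu(dy).
\end{align}
The second equality uses the normalization $\P_x[\int_0^\infty e^{-qs}g(X_s)\,dL_s^y]=g(y)r_q(y-x)$ integrated against $\mu(dy)$. To obtain the first equality rigorously, I would write $e^{-A_s}=e^{-A_\infty}e^{A_\infty-A_s}$ at the level of $\int_s^\infty$ and use $\P_x[\int_0^\infty e^{-qs}e^{-A_s}dA_s]=\P_x[\int_0^\infty e^{-qs}\,dA_s\,(\cdots)]$ via the strong Markov property at $s$. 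Equivalently, I would derive the standard identity $U^q_A 1 - U^q 1\cdot$ (perturbation formula) $U^q f - U^q_A f = U^q(\mu\text{-weighted } U^q_A f)$, with $U^q_A$ the killed resolvent and $f\equiv q$. This is the step I expect to need the most care: it is a Markov-property/Fubini argument, routine but the main obstacle.

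Multiplying by $r_q(0)$ gives
\begin{align}
r_q(0)-\varphi_q^\mu(x)=\int_K r_q(y-x)\,\varphi_q^\mu(y)\,\mu(dy).
\end{align}
Now substitute $r_q(y-x)=r_q(0)-h_q(x-y)$, which is (\ref{hq-def}) with argument $x-y$. This yields
\begin{align}
\varphi_q^\mu(x)=r_q(0)\Big(1-\mu[\varphi_q^\mu|_K]\Big)+\int_K\varphi_q^\mu(y)h_q(x-y)\,\mu(dy)\qquad\text{for } x\in\R.
\end{align}
Set $C_q^\mu:=r_q(0)(1-\mu[\varphi_q^\mu|_K])$. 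Restricting to $x\in K$ gives $(I-T_q^\mu)(\varphi_q^\mu|_K)=C_q^\mu$, and dividing the definition of $C_q^\mu$ by $r_q(0)>0$ gives the normalization $\mu[\varphi_q^\mu|_K]+C_q^\mu/r_q(0)=1$.

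Continuity comes from the same display. The integrand $\varphi_q^\mu(y)$ is bounded, since $0\le\varphi_q^\mu\le r_q(0)$. The resolvent density $r_q$ is bounded and continuous, and $\mu$ is finite. Dominated convergence therefore shows that $x\mapsto\int_K r_q(y-x)\varphi_q^\mu(y)\,\mu(dy)$ is continuous, and hence $\varphi_q^\mu$ is continuous on $\R$. In particular $\varphi_q^\mu|_K\in C(K)$, so $T_q^\mu$ applies to it; note that $T_q^\mu$ maps $C(K)$ to $C(K)$ because $h_q$ is continuous.
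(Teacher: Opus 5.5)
Your proposal is, in substance, the paper's proof. The paper invokes the Feynman--Kac resolvent identity $U^q\big((V_\mu^q f)\mu\big)=U^qf-V_\mu^qf$ with $f\equiv 1$; your perturbation formula with $f\equiv q$ is the same statement. This gives $\P_x[\Gamma^\mu_{\bm{e}_q}]=1-\int_\R r_q(y-x)\P_y[\Gamma^\mu_{\bm{e}_q}]\,\mu(dy)$. The paper then gets continuity by dominated convergence, multiplies by $r_q(0)$, and substitutes $r_q(y-x)=r_q(0)-h_q(x-y)$, exactly as you do. Once the key step rests on that formula, your argument is complete.

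The pathwise derivation you sketch first does not, however, produce that identity.
\begin{itemize}
\item After $1-e^{-A_t}=\int_0^t e^{-A_s}\,dA_s$ and Fubini, the coefficient $e^{-qs}e^{-A_s}$ of $dA_s$ is $\F_s$-measurable. The Markov property at time $s$ therefore has nothing to act on, and cannot turn the past factor $e^{-A_s}$ into the future quantity $\P_{X_s}[\Gamma^\mu_{\bm{e}_q}]$. The two expectations do coincide, but only because each equals $1-\P_x[\Gamma^\mu_{\bm{e}_q}]$, which is what you are trying to show.
\item Your patch $e^{-A_s}=e^{-A_\infty}e^{A_\infty-A_s}$ is also unusable, since recurrence forces $A_\infty=\infty$ a.s.
\end{itemize}

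The correct pathwise identity is $1-e^{-A_t}=\int_0^t e^{-(A_t-A_s)}\,dA_s$. Multiply by $qe^{-qt}$ and apply Fubini. The coefficient of $dA_s$ becomes $e^{-qs}\int_0^\infty qe^{-qu}\,(\Gamma^\mu_u\circ\theta_s)\,du$, which is a functional of the path after $s$. The Markov property for integrals against the continuous additive functional $A$ then replaces it by $e^{-qs}\P_{X_s}[\Gamma^\mu_{\bm{e}_q}]$. That is precisely how the Feynman--Kac formula cited in the paper is proved.
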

\begin{proof}
For $q>0$ and $f\in L^\infty(\R)$, we define
\begin{align}
  V_\mu^q f(x)&:=\P_x\left[\int_0^\infty e^{-qt}f(X_t)\Gamma_t^\mu \,dt\right],\\
  U^q(f\mu)(x)&:=\int_\R r_q(y-x)f(y)\,\mu(dy).
\end{align}
By the Feynman--Kac formula (see, e.g., p. 139 of \cite{Bertoin}), we have
\begin{align}
\label{FC-formula}
  U^q\Big((V_\mu^q f)\mu\Big)(x)=U^qf(x)-V_\mu^q f(x).
\end{align}
Since 
\begin{align}
  V_\mu^q 1(x)=\int_0^\infty e^{-qt}\P_x\Big[\Gamma_t^\mu\Big]\,dt=\frac{1}{q}\P_x\Big[\Gamma_{\bm{e}_q}^\mu\Big],
\end{align}
by setting $f\equiv 1$ in (\ref{FC-formula}), we have
\begin{align}
\label{lem1-eq1}
  \P_x\Big[\Gamma_{\bm{e}_q}^\mu\Big]=1-\int_\R r_q(y-x)\P_y\Big[\Gamma_{\bm{e}_q}^\mu\Big]\,\mu(dy).
\end{align}
Since $r_q$ is bounded and continuous, the map $x\mapsto \P_x[\Gamma_{\bm{e}_q}^\mu]$ is bounded, and $\mu$ has compact support, the dominated convergence theorem applied to this identity shows that $\varphi_q^\mu$ is continuous on $\R$. 

Multiplying both sides of equation (\ref{lem1-eq1}) by $r_q(0)$, we have
\begin{align*}
  \varphi_q^\mu(x)=r_q(0)\left(1-\int_\R  \varphi_q^\mu(y) \mu(dy)\right)+\int_\R  \varphi_q^\mu(y) h_q(x-y)\mu(dy).
     \stepcounter{equation}\tag{\theequation}
\end{align*}
Now, setting 
\begin{align}
  C_q^\mu:=r_q(0)\left(1-\int_\R \varphi_q^\mu(y) \mu(dy)\right),
\end{align}
we obtain equation (\ref{phiq-equation}). This completes the proof.
\end{proof}

We have thus obtained the equation satisfied by $\varphi_q^\mu$. To study the limit as $q\to0+$, we consider the following formal equations:
\begin{align}
\label{q-eq}
(I-T_q^\mu)f=C,&\qquad \mu[f]+\frac{C}{r_q(0)}=1,\\
\label{limit-eq}
(I-T^\mu)f=C,&\qquad \mu[f]=1,
\end{align}
where a function $f\in C(K)$ and a constant $C\in \R$ are unknown. Although these equations have only been introduced formally at this stage, in the remainder of this section we show that each of them has a unique solution. To obtain solutions to these equations, we use the general theory of compact operators. We first establish the following lemma.

\begin{lem}
\label{lem2.2}
$T_q^\mu$ and $T^\mu$ are compact operators on $C(K)$.
\end{lem}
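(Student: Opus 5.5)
We will prove compactness through the Arzel\`a--Ascoli theorem, treating $T_q^\mu$ and $T^\mu$ together. Write $k$ for either $h_q$ (with $q>0$ fixed) or $h$, and $T$ for the corresponding operator $Tf(x)=\int_K f(y)k(x-y)\,\mu(dy)$. The kernel $k$ is continuous on $\R$. For $h_q=r_q(0)-r_q(-\cdot)$ this holds because the resolvent density $r_q$ is bounded and continuous under Hypothesis \ref{Hyp}. For $h$ it is part of Theorem 1.1 of Takeda--Yano quoted above. Since $K$ is compact, the set $D:=\{x-y:\ x,y\in K\}$ is compact. Hence $k$ is bounded on $D$, say by $M_k$, and uniformly continuous on $D$.

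\textbf{Step 1 (well-defined and bounded).} For $f\in C(K)$ and $x\in K$, we have $|Tf(x)|\le M_k\|f\|_{\max}\,\mu(K)$, which is finite because $\mu$ is a finite measure. Continuity of $Tf$ on $K$ follows from the estimate in Step 2, or directly from dominated convergence. So $T$ maps $C(K)$ into $C(K)$ with operator norm at most $M_k\mu(K)$.

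\textbf{Step 2 (equicontinuity).} Let $\omega_k$ denote the modulus of continuity of $k$ on the compact set $\{x-y:\ x\in K',\ y\in K\}$, where $K'$ is a compact neighbourhood of $K$. For $x,x'\in K$ we then get
\begin{align*}
|Tf(x)-Tf(x')|\le \|f\|_{\max}\int_K |k(x-y)-k(x'-y)|\,\mu(dy)\le \|f\|_{\max}\,\mu(K)\,\omega_k(|x-x'|).
\end{align*}
Therefore the image under $T$ of the unit ball of $C(K)$ is uniformly bounded (by Step 1) and equicontinuous (by this estimate). By Arzel\`a--Ascoli it is relatively compact in $C(K)$, so $T$ is compact.

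There is no real obstacle here. The only point needing care is the continuity of the kernels: for $h$ this must be taken from the cited result of Takeda--Yano rather than proved afresh, while for $h_q$ it follows from the continuity of $r_q$. Although the lemma does not require it, the same argument also yields $\|T_q^\mu-T^\mu\|\le \mu(K)\sup_{z\in D}|h_q(z)-h(z)|\to 0$ as $q\to 0+$, by the local uniform convergence $h_q\to h$. This norm convergence will be useful later for passing to the limit in the equations (\ref{q-eq}) and (\ref{limit-eq}).
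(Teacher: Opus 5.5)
Your proof is correct and follows essentially the same argument as the paper: you bound the kernel and use its uniform continuity on the compact set $K-K$ to get a uniformly bounded, equicontinuous image of bounded sets, and then apply Arzel\`a--Ascoli. The compact neighbourhood $K'$ is unnecessary, since for $x,x'\in K$ both $x-y$ and $x'-y$ already lie in $K-K$.
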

\begin{proof}
Since the case of $T_q^\mu$ can be handled similarly, we only consider the case of $T^\mu$. Let $(f_n)\subset C(K)$ satisfy $\|f_n\|_{\max}\le M$ for some $M>0$. Since $h$ is continuous, we have
\begin{align}
|T^\mu f_n(x)|\le \int_K|f_n(y)|h(x-y)\,\mu(dy)\le M\mu(K)\max_{z\in K-K}h(z)<\infty,
 \end{align}
that is, the family $(T^\mu f_n)$ is uniformly bounded. Since $h$ is uniformly continuous on $K-K$ and
 \begin{align}
   \Big|T^\mu f_n(x)-T^\mu f_n(y)\Big|\le M\mu(K)\max_{z\in K}\Big|h(x-z)-h(y-z)\Big|,
 \end{align}
the family $(T^\mu f_n)$ is equicontinuous. Thus, by the Arzel\`{a}--Ascoli theorem, the family $(T^\mu f_n)$ is relatively compact in $C(K)$, that is, $T^\mu$ is a compact operator on $C(K)$. This completes the proof.
\end{proof}

We first consider equation (\ref{limit-eq}). To this end, we prove the following energy inequality.

\begin{lem}
\label{lem2.3}
Let $\nu$ be a finite signed measure with compact support $K$, and suppose that $\nu(\mathbb{R})=0$. Then we have
\begin{align}
\mathscr{E}_h(\nu):=\int_K\int_K h(x-y)\,\nu(dx)\nu(dy)\le 0.
\end{align}
\end{lem}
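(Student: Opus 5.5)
We prove the inequality first with $h_q$ in place of $h$, and then let $q\to 0+$.

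\emph{Step 1: rewrite the energy through the resolvent density.} Fix $q>0$. By (\ref{hq-def}) we have $h_q(x-y)=r_q(0)-r_q(y-x)$. Since $\nu(\R)=0$, the constant part drops out:
\begin{align*}
\int_K\int_K r_q(0)\,\nu(dx)\nu(dy)=r_q(0)\,\nu(\R)^2=0 .
\end{align*}
Hence
\begin{align*}
\mathscr{E}_{h_q}(\nu):=\int_K\int_K h_q(x-y)\,\nu(dx)\nu(dy)=-\int_K\int_K r_q(y-x)\,\nu(dx)\nu(dy).
\end{align*}
So it suffices to show that $r_q$ is a positive definite kernel, in the sense that $\int\int r_q(y-x)\,\nu(dx)\nu(dy)\ge 0$ for every finite signed measure $\nu$.

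\emph{Step 2: positive definiteness via Fourier analysis.} Use (\ref{rq-integral}), i.e.
\begin{align*}
r_q(x)=\frac{1}{2\pi}\int_\R \frac{e^{-i\lambda x}}{q+\Psi(\lambda)}\,d\lambda .
\end{align*}
Here $\Psi(-\lambda)=\overline{\Psi(\lambda)}$, so the real part over $(0,\infty)$ symmetrizes to the full line. Condition (\textbf{A}) makes this integral absolutely convergent, and $\nu$ is finite. By Fubini,
\begin{align*}
\int\int r_q(y-x)\,\nu(dx)\nu(dy)=\frac{1}{2\pi}\int_\R \frac{|\hat\nu(\lambda)|^2}{q+\Psi(\lambda)}\,d\lambda,
\end{align*}
where $\hat\nu(\lambda)=\int e^{i\lambda x}\nu(dx)$; the phase $e^{-i\lambda(y-x)}$ factors as $\hat\nu(\lambda)\overline{\hat\nu(\lambda)}$. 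The left side is real, so it equals the real part of the right side:
\begin{align*}
\frac{1}{2\pi}\int_\R |\hat\nu(\lambda)|^2\,\Re\frac{1}{q+\Psi(\lambda)}\,d\lambda .
\end{align*}
Since $\Re\Psi\ge 0$, we get $\Re\frac{1}{q+\Psi}=\frac{q+\Re\Psi}{|q+\Psi|^2}\ge 0$. Therefore the double integral is nonnegative, and $\mathscr{E}_{h_q}(\nu)\le 0$.

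\emph{Step 3: pass to the limit.} By Theorem 1.1 of \cite{Takeda-Yano}, $h_q\to h$ uniformly on the compact set $K-K$. Since $|\nu|$ is finite, $\mathscr{E}_{h_q}(\nu)\to\mathscr{E}_h(\nu)$, and so $\mathscr{E}_h(\nu)\le 0$.

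The main obstacle is justifying Step 2 rigorously: the Fubini interchange and the symmetrization of (\ref{rq-integral}) to the full line. Both rest on integrability of $1/|q+\Psi|$, which (\textbf{A}) supplies on $(0,\infty)$ and conjugate symmetry extends to $\R$. As a cross-check independent of Fourier analysis, one can write $\int\int r_q(y-x)\,\nu(dx)\nu(dy)=\int_0^\infty e^{-qt}\int\int p_t\,\nu\,\nu\,dt$. Positivity then follows because each $e^{-t\Psi}$ has nonnegative real part when paired with $|\hat\nu|^2$, via $\Re e^{-t\Psi}=e^{-t\Re\Psi}\cos(t\Im\Psi)$ integrated in $t$ against $e^{-qt}$.
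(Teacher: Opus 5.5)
Your argument is correct, and it takes a different route from the paper's. The paper works directly at $q=0$. It symmetrizes $\mathscr{E}_h(\nu)$ and inserts the Fourier representation $h(z)+h(-z)=\frac{2}{\pi}\int_0^\infty \Re\left(\frac{1-\cos(\lambda z)}{\Psi(\lambda)}\right)d\lambda$ (Lemma 3.3(i) of Takeda--Yano). It then interchanges the integrals and uses $\nu(\R)=0$ to turn $\iint(1-\cos(\lambda(x-y)))\,\nu(dx)\nu(dy)$ into $-|\widehat{\nu}(\lambda)|^2$. This gives the closed formula $\mathscr{E}_h(\nu)=-\frac{1}{\pi}\int_0^\infty|\widehat{\nu}(\lambda)|^2\,\frac{\Re\Psi(\lambda)}{|\Psi(\lambda)|^2}\,d\lambda$. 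The delicate step there is Fubini near $\lambda=0$, where $1/\Psi$ is singular; the paper controls it with the $1-\cos$ factor and Lemma 15.5 of Tsukada.

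You instead stay at $q>0$, where condition (\textbf{A}) makes $1/(q+\Psi)$ absolutely integrable, so the interchange is immediate. You let $\nu(\R)=0$ remove the $r_q(0)$ term and reduce everything to positive definiteness of $r_q$. Your sign and conjugation bookkeeping is right: $h_q(x-y)=r_q(0)-r_q(y-x)$ and $\iint e^{-i\lambda(y-x)}\nu(dx)\nu(dy)=|\widehat{\nu}(\lambda)|^2$. You then pass to the limit using only the locally uniform convergence $h_q\to h$, which the paper already takes as a standing fact. So you avoid both external lemmas, at the cost of not obtaining the explicit $q=0$ formula directly.

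Your route also has a genuine bonus. Step 2 holds for every finite signed measure, not just those of mass zero, i.e. $\iint h_q(x-y)\,\nu(dx)\nu(dy)\le r_q(0)\,\nu(\R)^2$. This is exactly what is needed to show $\mu[(I-T_q^\mu)^{-1}1]+1/r_q(0)\neq 0$ in the $q$-version of the Fredholm analysis, which the paper omits as ``analogous''. There the relevant measure $g\cdot\mu$ need not have zero mass, so the mass-zero lemma alone does not suffice.

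One small point: drop the closing ``cross-check''. It is not independent of the Fourier argument, since integrating $e^{-t\Psi}$ against $e^{-qt}$ just reproduces $1/(q+\Psi)$. It also presupposes transition densities $p_t$, which are not assumed. As phrased, it suggests that $\int|\widehat{\nu}|^2\,\Re e^{-t\Psi}\,d\lambda\ge 0$ for each fixed $t$, which is false in general once $\Psi$ is not real-valued; positivity appears only after integrating in $t$.
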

\begin{proof}
Since 
\begin{align}
\int_K\int_K e^{i\lambda(x-y)}\,\nu(dx)\nu(dy)&=\int_K e^{i\lambda x}\,\nu(dx)\int_K e^{-i\lambda y}\,\nu(dy)=|\widehat{\nu}(\lambda)|^2\ge 0,
\end{align}
taking the real part, we have
\begin{align}
\int_K\int_K \cos(\lambda(x-y))\,\nu(dx)\nu(dy)=|\widehat{\nu}(\lambda)|^2\ge 0.
\end{align}
Thus, using Lemma 3.3 (i) of Takeda--Yano \cite{Takeda-Yano} and the assumption $\nu(\R)=0$, we have
\begin{align*}
\mathscr{E}_h(\nu)&=\int_K \int_K h(x-y)\,\nu(dx)\nu(dy)\\
&=\frac{1}{2}\int_K\int_K \Big(h(x-y)+h(y-x)\Big)\,\nu(dx)\nu(dy)\\
&=\frac{1}{\pi}\int_K\int_K \left\{\int_0^\infty \Re \left(\frac{1-\cos (\lambda(x-y))}{\Psi(\lambda)}\right)\,d\lambda\right\}\,\nu(dx)\nu(dy)\\
&=\frac{1}{\pi}\int_0^\infty  \left\{\int_K\int_K \Big(1-\cos (\lambda(x-y))\Big)\,\nu(dx)\nu(dy)\right\}\Re \left(\frac{1}{\Psi(\lambda)}\right)\,d\lambda\\
&=-\frac{1}{\pi}\int_0^\infty |\widehat{\nu}(\lambda)|^2\frac{\Re \Psi(\lambda)}{|\Psi(\lambda)|^2}\,d\lambda\\
&\le 0,
\stepcounter{equation}\tag{\theequation}
\end{align*}
where the application of Fubini's theorem in the fourth equality is justified by Lemma 15.5 of Tsukada \cite{Tsukada} and the assumption that $\nu$ is a finite signed measure with compact support. This completes the proof.
\end{proof}

We distinguish two cases according to whether $1$ belongs to the spectrum of $T^\mu$. If $1$ does not belong to the spectrum, then $I-T^\mu$ is invertible, and equation (\ref{limit-eq}) can be solved easily:

\begin{prop}
Assume that $I-T^\mu$ is invertible; equivalently, $1$ does not belong to the spectrum of $T^\mu$. Then the unique solution to equation (\ref{limit-eq}) is
\begin{align}
  f=\frac{(I-T^\mu)^{-1}1}{\mu[(I-T^\mu)^{-1}1]},\qquad C=\frac{1}{\mu[(I-T^\mu)^{-1}1]}.
\end{align}
\end{prop}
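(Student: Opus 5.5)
Assume $I-T^\mu$ is invertible on $C(K)$. Any solution $(f,C)$ of (\ref{limit-eq}) must then satisfy $f=(I-T^\mu)^{-1}C=C\,(I-T^\mu)^{-1}1$, where $1$ denotes the constant function on $K$. The normalization $\mu[f]=1$ becomes $C\,\mu[(I-T^\mu)^{-1}1]=1$. Everything therefore reduces to one fact: $m:=\mu[(I-T^\mu)^{-1}1]\neq 0$. Granting it, we get $C=1/m$ and $f=(I-T^\mu)^{-1}1/m$, and uniqueness follows because each step was forced. Conversely, this pair clearly satisfies (\ref{limit-eq}): applying $I-T^\mu$ to $f$ gives $1/m=C$, and $\mu[f]=m/m=1$.

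To show $m\neq0$, set $g:=(I-T^\mu)^{-1}1\in C(K)$, so $g-T^\mu g=1$ on $K$. Suppose for contradiction that $\mu[g]=0$. Then $\nu:=g\cdot\mu$ is a finite signed measure with support in $K$ and $\nu(\R)=0$. Integrate the equation $g-T^\mu g=1$ against $\nu$:
\begin{align*}
\int_K g^2\,d\mu-\int_K\int_K h(x-y)\,\nu(dy)\,\nu(dx)=\int_K 1\,d\nu=0 .
\end{align*}
The double integral is $\mathscr{E}_h(\nu)$, which is $\le 0$ by Lemma \ref{lem2.3}. Both terms on the left are therefore nonnegative, and they sum to zero, so $\int_K g^2\,d\mu=0$. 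Hence $g=0$ $\mu$-a.e.

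It remains to show that $g=0$ $\mu$-a.e. is impossible. This is the one point needing a little care. Since $T^\mu g(x)=\int_K g(y)h(x-y)\,\mu(dy)$ depends only on the $\mu$-a.e. class of $g$, we get $T^\mu g\equiv0$ on $K$. The equation $g-T^\mu g=1$ then gives $g\equiv1$ on $K$. This contradicts $g=0$ $\mu$-a.e., because $\mu$ is nonzero, so $\mu(K)>0$.

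Therefore $m\neq0$, and the formula follows. Lemma \ref{lem2.3} requires the support of $\nu$ to be compact; this holds since $\nu\ll\mu$ and $\mathrm{supp}(\mu)=K$ is compact, so Lemma \ref{lem2.3} applies with support contained in $K$. The main point is the non-vanishing of $m$, which the energy inequality delivers directly.
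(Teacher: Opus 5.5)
Your proposal is correct and follows essentially the same route as the paper. Both reduce the claim to $\mu[(I-T^\mu)^{-1}1]\neq 0$, integrate $(I-T^\mu)g=1$ against $g\cdot\mu$, and use the energy inequality of Lemma \ref{lem2.3} to force $g=0$ $\mu$-a.e.; this gives $T^\mu g\equiv 0$ on $K$, hence $g\equiv 1$ on $K$, a contradiction.
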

\begin{proof}
Let $g:=(I-T^\mu)^{-1}1$. Since $I-T^\mu$ is invertible, it suffices to show that $\mu[g]\neq 0$. We assume that $\mu[g]=0$. We denote $g\cdot\mu:=g(x)\mu(dx)$. By Lemma \ref{lem2.3}, we have
\begin{align*}
0=\mu[g]&=\int_K g(x)\cdot (I-T^\mu)g(x)\,\mu(dx)\\
&=\int_K g^2(x)\,\mu(dx)-\int_K g(x)\,\mu(dx)\int_K g(y)h(x-y)\,\mu(dy)\\
&=\int_K g^2(x)\,\mu(dx)-\mathscr{E}_h(g\cdot \mu)\\
&\ge \int_K g^2(x)\,\mu(dx)\ge 0.
\stepcounter{equation}\tag{\theequation}
\end{align*}
Thus, we have $g=0,\ \mu$-a.e. Then we have
\begin{align}
1=(I-T^\mu)g(x)=g(x)-\int_K h(x-y)g(y)\,\mu(dy)=g(x)\qquad \text{for}\ x\in K.
\end{align}
This is a contradiction. This completes the proof.
\end{proof}

\begin{Rem}
By the Neumann series construction of the inverse operator, $I-T^\mu$ is invertible whenever the following condition holds:
\begin{align}
 \sup_{x\in K} \int_K h(x-y)\,\mu(dy)<1.
\end{align}
\end{Rem}

It remains to consider the case where $1$ belongs to the spectrum of $T^\mu$. In this case, since $I-T^\mu$ is not invertible, we appeal to the general theory of compact operators:

\begin{prop}
Assume that $1$ belongs to the spectrum of $T^\mu$. Then there exists a function $f^\mu\in C(K)$ such that 
\begin{align}
\label{prop2.6-eq1}
  \ker (I-T^\mu)=\mathrm{span}(f^\mu),\qquad \mu[f^\mu]=1.
\end{align}
Moreover, the unique solution to equation (\ref{limit-eq}) is
\begin{align}
f=f^\mu,\qquad C=0.
\end{align}
\end{prop}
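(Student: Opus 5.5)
Since $T^\mu$ is compact (Lemma \ref{lem2.2}) and $1$ lies in its spectrum, the Riesz--Schauder theory makes $1$ an eigenvalue, so $\ker(I-T^\mu)\neq\{0\}$. The plan is to use the energy inequality of Lemma \ref{lem2.3} in three places: first to show that every nonzero kernel element has nonzero $\mu$-integral, then to show that the kernel is one-dimensional, and finally to show that every solution of (\ref{limit-eq}) must have $C=0$.

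\textbf{Step 1 (no kernel element has zero mass).} Let $g\in\ker(I-T^\mu)$ with $\mu[g]=0$. Then $g\cdot\mu$ is a finite signed measure with compact support and total mass zero, so Lemma \ref{lem2.3} gives $\mathscr{E}_h(g\cdot\mu)\le 0$. Pairing $(I-T^\mu)g=0$ with $g\,d\mu$ yields
\begin{align}
0=\int_K g^2\,d\mu-\mathscr{E}_h(g\cdot\mu)\ge \int_K g^2\,d\mu ,
\end{align}
so $g=0$ $\mu$-a.e. Then $g(x)=\int_K h(x-y)g(y)\,\mu(dy)=0$ for every $x\in K$, hence $g=0$ on $K$.

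\textbf{Step 2 (the kernel is one-dimensional).} Pick any nonzero $g_0\in\ker(I-T^\mu)$. By Step 1, $\mu[g_0]\neq0$, so we may set $f^\mu:=g_0/\mu[g_0]$, which satisfies $\mu[f^\mu]=1$. For any $g\in\ker(I-T^\mu)$, the function $g-\mu[g]f^\mu$ lies in the kernel and has $\mu$-integral zero, so it vanishes by Step 1. Hence $\ker(I-T^\mu)=\mathrm{span}(f^\mu)$, which is (\ref{prop2.6-eq1}).

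\textbf{Step 3 (every solution has $C=0$).} This is the step I expect to be the main obstacle: we must rule out $C\neq0$ without knowing that $1\in\mathrm{Ran}(I-T^\mu)$ fails. Suppose $(I-T^\mu)f=C$ with $\mu[f]=1$. Pair this equation with $f^\mu\,d\mu$ and use Fubini to move $T^\mu$ onto the other factor:
\begin{align}
\int_K f^\mu\,(I-T^\mu)f\,d\mu=\int_K f\,d\mu-\int_K\int_K f^\mu(x)h(x-y)f(y)\,\mu(dx)\mu(dy).
\end{align}
Since $h$ is not symmetric, the double integral is $\int_K f\,(I-\widetilde T^\mu)f^\mu\,d\mu$ with $\widetilde T^\mu$ having kernel $h(y-x)$, not $(I-T^\mu)f^\mu$; so the pairing does not trivially give zero. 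I would first try the more robust route of applying Lemma \ref{lem2.3} to $\nu:=(f-f^\mu)\cdot\mu$, which is allowed because $\mu[f-f^\mu]=0$. We have $(I-T^\mu)(f-f^\mu)=C$, and pairing with $(f-f^\mu)\,d\mu$ gives
\begin{align}
C\,\mu[f-f^\mu]=0=\int_K (f-f^\mu)^2\,d\mu-\mathscr{E}_h(\nu)\ge\int_K (f-f^\mu)^2\,d\mu .
\end{align}
Thus $f=f^\mu$ $\mu$-a.e. Plugging this into the equation evaluated on $K$ gives $f(x)=C+T^\mu f^\mu(x)=C+f^\mu(x)$, so $f=f^\mu+C$ on $K$. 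Then $\mu$-a.e. equality forces $C\,\mu(K)=0$, and since $\mu\neq0$, we get $C=0$ and $f=f^\mu$.

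Existence follows because $(f^\mu,0)$ solves (\ref{limit-eq}). The same energy argument also settles uniqueness, so Step 3 in fact goes through smoothly once the difference trick is used.
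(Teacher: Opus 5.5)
Your proof is correct. Steps 1--2 are the paper's argument in slightly different packaging. The paper phrases the injectivity of $g\mapsto\mu[g]$ on $\ker(I-T^\mu)$ via rank--nullity, and upgrades $\mu$-a.e.\ vanishing to vanishing on $K$ using continuity and $\mathrm{supp}(\mu)=K$. You use $g=T^\mu g$ instead, which does not even need the support assumption.

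Step 3 is where you genuinely depart. The paper goes through the Fredholm alternative: it introduces the adjoint eigenfunctional $\eta^\mu$ spanning $\ker(I-(T^\mu)^\ast)$, and uses the closed-range identity $\mathrm{Im}(I-T^\mu)=\{g:\eta^\mu[g]=0\}$ to argue by contradiction that $\eta^\mu[1]\neq0$. The contradiction comes from the same energy computation, applied to $g_3=g_2-\mu[g_2]f^\mu$ for a hypothetical preimage $g_2$ of $1$. Only then does it conclude $C=0$ from the solvability criterion $\eta^\mu[C]=0$.

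You instead apply Lemma \ref{lem2.3} directly to the zero-mass measure $(f-f^\mu)\cdot\mu$. This needs nothing beyond Riesz--Schauder (to obtain a nonzero kernel element) and the energy inequality, and it bypasses the adjoint entirely. Your difference trick, applied to two arbitrary solutions $(f_1,C_1)$ and $(f_2,C_2)$, also gives uniqueness for (\ref{limit-eq}) whether or not $1$ lies in the spectrum, so it covers the invertible case uniformly.

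The paper's route makes explicit the structural fact $1\notin\mathrm{Im}(I-T^\mu)$. That fact follows at once from your argument as well: any preimage $g$ of $1$ would yield the solution $\bigl(g+(1-\mu[g])f^\mu,\,1\bigr)$ of (\ref{limit-eq}), contradicting $C=0$.
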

\begin{proof}
Define the operator $\mu:C(K)\to \R$ by $\mu f:=\mu[f]$. Let $g_1\in \ker (I-T^\mu)\cap \ker (\mu)$. Since $g_1=T^\mu g_1$, Lemma \ref{lem2.3} implies
\begin{align}
  0\le \int_K g_1^2 (x)\,\mu(dx)=\int_K g_1(x)\cdot T^\mu g_1(x)\,\mu(dx)=\mathscr{E}_h(g_1\cdot \mu)\le 0.
\end{align}
Thus, we have $g_1=0,\ \mu$-a.e. Since $g_1$ is continuous on $K$ and $\mathrm{supp}(\mu)=K$, we have $g_1\equiv 0$, that is, the operator $\mu|_{\ker (I-T^\mu)}$ is injective. Thus, by the rank-nullity theorem, we have
\begin{align}
\dim \Big(\ker (I-T^\mu)\Big)=\dim \Big(\mathrm{Im}(\mu|_{\ker (I-T^\mu)})\Big)+\dim\Big(\ker(\mu|_{\ker (I-T^\mu)})\Big)\le 1.
\end{align}
Since $1$ belongs to the spectrum of $T^\mu$, we have $\dim (\ker (I-T^\mu))=1.$ Therefore, there exists a unique function $f^\mu\in C(K)$ satisfying (\ref{prop2.6-eq1}). 

We now prove the latter assertion. By the Fredholm alternative (see, e.g., Theorem 7.17 of \cite{Neerven} or Theorem 9.9 of \cite{FujitaKurodaIto}), we have
\begin{align}
\dim\Big(\ker (I-(T^\mu)^\ast)\Big)=\dim\Big(\ker (I-T^\mu)\Big)= 1.
\end{align}
Thus, there exists $\eta^\mu\in C(K)^\ast$ such that
\begin{align}
  \ker (I-(T^\mu)^\ast)=\mathrm{span}(\eta^\mu).
\end{align}
We will show that $\eta^\mu [1]\neq 0$. We assume that $\eta^\mu[1]=0.$ Since
\begin{align}
\Big\{f\in C(K):\ {}^\forall \eta\in \ker (I-(T^\mu)^\ast),\ \eta[f]=0\Big\}=\mathrm{Im}(I-T^\mu)
\end{align}
(see, e.g., the proof of Theorem 7.17 of \cite{Neerven} or Theorem 9.9 of \cite{FujitaKurodaIto}), there exists $g_2\in C(K)$ such that
\begin{align}
\label{prop2.6-eq2}
  (I-T^\mu)g_2=1.
\end{align}
Let $g_3:=g_2-\mu[g_2]f^\mu$. By (\ref{prop2.6-eq1}), we then have
\begin{align}
\mu[g_3]=\mu[g_2]-\mu[g_2]\mu[f^\mu]=0.
\end{align}
Since $(I-T^\mu)g_3=1$ by (\ref{prop2.6-eq1}) and (\ref{prop2.6-eq2}), Lemma \ref{lem2.3} implies
\begin{align*}
0&=\mu[g_3]=\int_K g_3(x)\cdot (I-T^\mu)g_3(x)\,\mu(dx)\\
&=\int_K g_3^2(x)\,\mu(dx)-\mathscr{E}_h(g_3\cdot \mu)\ge \int_K g_3^2(x)\,\mu(dx)\ge 0.
\stepcounter{equation}\tag{\theequation}
\end{align*}
Thus, we have $g_3\equiv 0$. However, this contradicts $(I-T^\mu)g_3=1$. Thus, we obtain $\eta^\mu[1]\neq 0.$

By the Fredholm alternative (see, e.g., Theorem 8.7-2 of \cite{Kreyszig} or Theorem 9.9 of \cite{FujitaKurodaIto}), equation (\ref{limit-eq}) has a solution if and only if $\eta^\mu[C]=0$, so we have $C=0$. In this case, since the solutions $f$ belong to $\ker (I-T^\mu)$, we can write $f=af^\mu$ for a constant $a\in \R$. But since $\mu[f]=1$, we have $a=1$. Thus, $f^\mu$ is the unique solution. This completes the proof.
\end{proof}

We have thus shown that equation (\ref{limit-eq}) admits a unique solution. An analogous result holds for equation (\ref{q-eq}). Since the proof follows essentially the same line of argument, with appropriate modifications such as replacing $h$ by $h_q$, we omit the details.

\begin{prop}
\begin{enumerate}
  \item Assume that $I-T_q^\mu$ is invertible. Then the unique solution to equation (\ref{q-eq}) is
 \begin{align}
   f=\frac{(I-T_q^\mu)^{-1}1}{\mu[(I-T_q^\mu)^{-1}1]+\frac{1}{r_q(0)}},\qquad C=\frac{1}{\mu[(I-T_q^\mu)^{-1}1]+\frac{1}{r_q(0)}}.
 \end{align}
 \item Assume that $1$ belongs to the spectrum of $T_q^\mu$. Then there exists a function $f_q^\mu \in C(K)$ such that
\begin{align}
  \ker (I-T_q^\mu)=\mathrm{span}(f_q^\mu),\qquad  \mu[f_q^\mu]=1.
\end{align}
Moreover, the unique solution to equation (\ref{q-eq}) is
\begin{align}
f=f_q^\mu,\qquad C=0.
\end{align}
\end{enumerate}
\end{prop}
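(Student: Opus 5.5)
The plan is to repeat the argument for equation (\ref{limit-eq}) with $h$ replaced by $h_q$. The only new feature is the extra term $C/r_q(0)$ in the normalization, and it turns out to help rather than hurt.

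The first ingredient is a $q$-version of Lemma \ref{lem2.3}. Using (\ref{hq-def}) and symmetrizing, for a finite signed measure $\nu$ on $K$ with $\nu(\R)=0$ we get
\begin{align*}
\int_K\int_K h_q(x-y)\,\nu(dx)\nu(dy)=-\frac{1}{\pi}\int_0^\infty |\widehat{\nu}(\lambda)|^2\,\Re\left(\frac{1}{q+\Psi(\lambda)}\right)d\lambda\le 0 .
\end{align*}
The inequality holds because $\Re\Psi\ge 0$. Fubini is justified directly by (\textbf{A}), since $1/(q+\Psi)$ is integrable; this is easier than the $q=0$ case. A cleaner route is available, and it lets us drop the condition $\nu(\R)=0$. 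Write $h_q(x)=r_q(0)-r_q(-x)$. For an arbitrary finite signed measure $\nu$ on $K$,
\begin{align*}
\int_K\int_K h_q(x-y)\,\nu(dx)\nu(dy)=r_q(0)\,\nu(\R)^2-\mathscr{E}_{r_q}(\nu),\qquad \mathscr{E}_{r_q}(\nu):=\int_K\int_K r_q(y-x)\,\nu(dx)\nu(dy)\ge 0,
\end{align*}
where $\mathscr{E}_{r_q}(\nu)$ is nonnegative by the same Fourier computation.

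For case (i), let $g:=(I-T_q^\mu)^{-1}1$. We need $\mu[g]+1/r_q(0)\neq 0$; the formulas for $f$ and $C$ then follow by substitution, and uniqueness holds because any solution satisfies $f=C g$. Pair $(I-T_q^\mu)g=1$ with $g\mu$ to get
\begin{align*}
\mu[g]=\int_K g^2\,d\mu-r_q(0)\mu[g]^2+\mathscr{E}_{r_q}(g\mu).
\end{align*}
Suppose $\mu[g]=-1/r_q(0)$. Then the identity reads $-1/r_q(0)=\int_K g^2\,d\mu-1/r_q(0)+\mathscr{E}_{r_q}(g\mu)$. Both terms on the right apart from $-1/r_q(0)$ are nonnegative, so $g=0$ $\mu$-a.e. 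But then $g=1+T_q^\mu g=1$ on $K$, a contradiction.

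For case (ii), we first show that $\mu$ is injective on $\ker(I-T_q^\mu)$ exactly as in the proof for $T^\mu$. If $g_1=T_q^\mu g_1$ and $\mu[g_1]=0$, then $\int_K g_1^2\,d\mu=\int_K\int_K h_q\,d(g_1\mu)\,d(g_1\mu)\le 0$, so $g_1\equiv 0$ by continuity and $\mathrm{supp}\,\mu=K$. Hence the kernel is one-dimensional, spanned by some $f_q^\mu$ with $\mu[f_q^\mu]=1$.

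Next we show $1\notin\mathrm{Im}(I-T_q^\mu)$. Suppose $(I-T_q^\mu)g_2=1$ and set $g_3:=g_2-\mu[g_2]f_q^\mu$. Then $(I-T_q^\mu)g_3=1$ and $\mu[g_3]=0$, and pairing with $g_3\mu$ gives $0=\mu[g_3]\ge\int_K g_3^2\,d\mu$. This forces $g_3=0$ on $K$, contradicting $(I-T_q^\mu)g_3=1$. By the Fredholm alternative, $C\in\mathrm{Im}(I-T_q^\mu)$ only when $C=0$. So every solution lies in the kernel and has the form $f=af_q^\mu$, and the normalization $\mu[f]+0=1$ gives $a=1$.

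The main obstacle is the Fourier identity for $h_q$ (equivalently, the positive-definiteness of $r_q$) together with the justification of Fubini. Of all the steps, this is the only one that uses properties of the process rather than functional analysis, and it rests on (\ref{rq-integral}) and (\textbf{A}).
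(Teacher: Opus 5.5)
Your proposal is correct and is precisely the argument the paper omits with the remark that one repeats the proof for $T^\mu$ with $h$ replaced by $h_q$. The one place where a real modification is needed is case (i): there the measure $g\mu$ has total mass $-1/r_q(0)\neq 0$, so the zero-mass energy inequality cannot be applied directly, and your identity $\int_K\int_K h_q(x-y)\,\nu(dx)\nu(dy)=r_q(0)\nu(\R)^2-\mathscr{E}_{r_q}(\nu)$ with $\mathscr{E}_{r_q}(\nu)\ge 0$ handles this correctly.
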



\section{Kac Killing Penalization}
\label{S3}
For each $q>0$, let $(f_q^\mu,C_q^\mu)$ denote the unique solution to (\ref{q-eq}), and let $(f^\mu,C^\mu)$ denote the unique solution to (\ref{limit-eq}). Then the following relation holds:

\begin{prop}
\label{prop3.1}
The pair $(f_q^\mu,C_q^\mu)$ converges to $(f^\mu,C^\mu)$ as $q\to 0+$, that is, 
\begin{align}
   f_q^\mu\to f^\mu\ \text{in}\ C(K),\qquad C_q^\mu\to C^\mu.
 \end{align}
\end{prop}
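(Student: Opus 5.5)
The plan is a compactness-plus-uniqueness argument. First, $T_q^\mu\to T^\mu$ in operator norm on $C(K)$. Indeed,
\begin{align}
\|T_q^\mu-T^\mu\|\le \mu(K)\sup_{z\in K-K}|h_q(z)-h(z)|\longrightarrow 0\qquad (q\to 0+),
\end{align}
because $h_q\to h$ uniformly on compact sets (Theorem 1.1 of Takeda--Yano). Also $1/r_q(0)\to 0$ by (\ref{rq-lim}). So the system (\ref{q-eq}) is a small perturbation of (\ref{limit-eq}). By uniqueness for (\ref{limit-eq}), it then suffices to show two things: the family $(f_q^\mu,C_q^\mu)$ is bounded in $C(K)\times\R$ for small $q$, and every limit point along a sequence $q_n\to0+$ solves (\ref{limit-eq}).

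\textbf{Limit points.} Suppose $\sup_{q<q_0}(\|f_q^\mu\|_{\max}+|C_q^\mu|)<\infty$, and take any sequence $q_n\to 0+$. Since $T^\mu$ is compact (Lemma \ref{lem2.2}), passing to a subsequence gives $T^\mu f_{q_n}^\mu\to g$ in $C(K)$. The operator-norm convergence then gives $T_{q_n}^\mu f_{q_n}^\mu\to g$. Along a further subsequence $C_{q_n}^\mu\to C$. Hence
\begin{align}
f_{q_n}^\mu=C_{q_n}^\mu+T_{q_n}^\mu f_{q_n}^\mu\to C+g=:f \quad\text{in } C(K).
\end{align}
Passing to the limit in the equations gives $(I-T^\mu)f=C$ and $\mu[f]=\lim(\mu[f_{q_n}^\mu]+C_{q_n}^\mu/r_{q_n}(0))=1$. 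By uniqueness $(f,C)=(f^\mu,C^\mu)$. Since every sequence has a subsequence converging to the same limit, the full family converges.

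\textbf{Boundedness (main obstacle).} I argue by contradiction. Suppose $M_n:=\|f_{q_n}^\mu\|_{\max}+|C_{q_n}^\mu|\to\infty$, and normalize $\tilde f_n:=f_{q_n}^\mu/M_n$, $\tilde C_n:=C_{q_n}^\mu/M_n$. These satisfy
\begin{align}
(I-T_{q_n}^\mu)\tilde f_n=\tilde C_n,\qquad \mu[\tilde f_n]+\frac{\tilde C_n}{r_{q_n}(0)}=\frac{1}{M_n}\to 0.
\end{align}
The compactness argument above yields a limit $(\tilde f,\tilde C)$ with $\|\tilde f\|_{\max}+|\tilde C|=1$, $(I-T^\mu)\tilde f=\tilde C$ and $\mu[\tilde f]=0$. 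Now pair the equation with $\tilde f\,d\mu$ and use Lemma \ref{lem2.3} for the signed measure $\tilde f\cdot\mu$, which has total mass $0$:
\begin{align}
0=\tilde C\mu[\tilde f]=\int_K\tilde f^2\,d\mu-\mathscr{E}_h(\tilde f\cdot\mu)\ge\int_K\tilde f^2\,d\mu .
\end{align}
So $\tilde f=0$ $\mu$-a.e., hence $\tilde f\equiv0$ on $K=\mathrm{supp}(\mu)$ by continuity. The equation then forces $\tilde C=0$, contradicting the normalization $\|\tilde f\|_{\max}+|\tilde C|=1$.

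The delicate point is that the limiting homogeneous system must have only the trivial solution. This is the same energy argument used in the uniqueness propositions of Section \ref{S2}. It works because the normalization term $\tilde C_n/r_{q_n}(0)$ vanishes in the limit, with $\tilde C_n$ bounded and $r_{q_n}(0)\to\infty$.
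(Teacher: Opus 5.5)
Your proof is correct, but it gets the a priori bound by a different mechanism than the paper.

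The paper relies on the probabilistic identification $f_q^\mu=\varphi_q^\mu|_K\ge 0$, which needs uniqueness for (\ref{q-eq}). It then does a case analysis on the sign of $C_q^\mu$ and gets explicit bounds: $0\le C_q^\mu\le 1/\mu(K)$ or $0<-C_q^\mu\le 2H$. From these it bounds $\|f_q^\mu\|_{\max}\le H\mu[f_q^\mu]+|C_q^\mu|$. Finally it applies Arzel\`{a}--Ascoli directly to $(f_q^\mu)$, using the equicontinuity of $(h_q)$ quoted from Takeda--Yano.

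You instead normalize and argue by contradiction. The energy inequality of Lemma \ref{lem2.3} excludes a nontrivial solution of the limiting homogeneous system $(I-T^\mu)\tilde f=\tilde C$, $\mu[\tilde f]=0$. Your compactness comes from $T^\mu$ together with $\|T_q^\mu-T^\mu\|\to 0$, so you never need equicontinuity of the family $(h_q)$.

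What each route buys:
\begin{itemize}
\item The paper's argument gives quantitative bounds on $C_q^\mu$; yours does not.
\item Yours uses neither positivity nor uniqueness for (\ref{q-eq}). It shows that any family of solutions of (\ref{q-eq}) converges to $(f^\mu,C^\mu)$. You could therefore apply it directly to the pair $(\varphi_q^\mu|_K,C_q^\mu)$ from (\ref{phiq-equation}), bypassing the omitted proposition on (\ref{q-eq}) in Section \ref{S2}.
\end{itemize}

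Abstractly, your argument amounts to this: $(f,C)\mapsto\bigl((I-T^\mu)f-C,\ \mu[f]\bigr)$ is an injective compact perturbation of the identity on $C(K)\times\R$, hence invertible. That invertibility is stable under the norm-small perturbations $T_q^\mu-T^\mu$ and $1/r_q(0)$.
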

\begin{proof}
First, we consider $q>0$ such that $C_q^\mu\ge 0$. Since $f_q^\mu=\varphi_q^\mu|_K\ge 0$ by the uniqueness of the solution, we have
 \begin{align}
   f_q^\mu=T_q^\mu f_q^\mu+C_q^\mu \ge C_q^\mu .
 \end{align}
Thus, by integrating with respect to $\mu$ and using (\ref{q-eq}), we have
 \begin{align}
    C_q^\mu \mu(K)\le \mu[f_q^\mu]=1-\frac{C_q^\mu}{r_q(0)}\le 1.
 \end{align}
 Therefore, if $C_q^\mu \ge 0$, then
 \begin{align}
   0\le C_q^\mu \le \frac{1}{\mu(K)}.
 \end{align}
Next, we consider $q>0$ such that $C_q^\mu<0$. Since the convergence $h_{q'} \to h$ is uniform on compact sets, we have
\begin{align}
  H:=\sup_{z\in K-K,\ 0<q'<q_0}h_{q'}(z)<\infty
\end{align}
for some fixed, sufficiently small $q_0>0$. For $0<q<q_0$ and $x\in K$, by (\ref{q-eq}), we have
 \begin{align*}
   0< -C_q^\mu&=T_q^\mu f_q^\mu(x)-f_q^\mu(x)\le T_q^\mu f_q^\mu(x)\\
   &=\int_K f_q^\mu(y)h_q(x-y)\,\mu(dy)\\
   &\le H\mu[f_q^\mu]= H\left(1-\frac{C_q^\mu}{r_q(0)}\right).
   \stepcounter{equation}\tag{\theequation}
 \end{align*}
Thus, we have
 \begin{align}
   (-C_q^\mu)\left(1-\frac{H}{r_q(0)}\right)\le H.
 \end{align}
By (\ref{rq-lim}), there exists $q_1>0$ such that $r_q(0)>2H$ for $0<q<q_1$. Therefore, if $C_q^\mu<0$, then
 \begin{align}
  0<-C_q^\mu \le 2H\qquad \text{for}\ 0<q<q_0\wedge q_1.
 \end{align}
 
Combining the above results, we have
 \begin{align}
   \sup_{0<q< q_0\wedge q_1}|C_q^\mu|\le \frac{1}{\mu(K)}\vee 2H<\infty.
 \end{align}
By (\ref{rq-lim}), we have
 \begin{align}
   \mu[f_q^\mu]=1-\frac{C_q^\mu}{r_q(0)}\to 1.
 \end{align}
Thus, there exists $q_2>0$ such that $|\mu[f_q^\mu]|\le 2$ for $0<q<q_2$. We define $q_3:=q_0\wedge q_1\wedge q_2.$ Since
 \begin{align}
   \|f_q^\mu\|_{\max}\le \|T_q^\mu f_q^\mu\|_{\max}+|C_q^\mu|\le H\mu[f_q^\mu]+|C_q^\mu|,
 \end{align}
$(f_q^\mu)_{0<q<q_3}\subset C(K)$ is uniformly bounded. Since
 \begin{align}
   |f_q^\mu(x)-f_q^\mu(y)|=|T_q^\mu f_q^\mu(x)-T_q^\mu f_q^\mu(y)|\le \int_K f_q^\mu(z)\Big|h_q(x-z)-h_q(y-z)\Big|\,\mu(dz)
 \end{align}
and $(h_q)_{q>0}$ is equicontinuous (see p. 14 of Takeda--Yano \cite{Takeda-Yano}), the equicontinuity of $(f_q^\mu)_{0<q<q_3}$ follows in the same way as in the proof of Lemma \ref{lem2.2}. Thus, by the Arzel\`{a}--Ascoli theorem, $(f_q^\mu)_{0<q<q_3}$ is relatively compact. 

For any subsequence $(q_n)$ such that $q_n\to 0+$, there exists a further subsequence $(q_{n_k})$, a function $f_\ast\in C(K)$, and a constant $C_\ast\in \R$ such that
 \begin{align}
   f_{q_{n_k}}^\mu\to f_\ast\ \text{in}\ C(K),\qquad C_{q_{n_k}}^\mu\to C_\ast.
 \end{align}
By uniform convergence and (\ref{q-eq}), we have
 \begin{align}
   \mu[f_\ast]=\lim_{k\to \infty}\mu[f_{q_{n_k}}^\mu]=\lim_{k\to \infty}\left(1-\frac{C_{q_{n_k}}^\mu}{r_{q_{n_k}}(0)}\right)=1.
 \end{align}
Since the convergence $h_{q_{n_k}}\to h$ is uniform on compact sets, we have $\|T_{q_{n_k}}^\mu -T^\mu\|\to 0.$ Thus, we have
 \begin{align}
   \|T_{q_{n_k}}^\mu f_{q_{n_k}}^\mu-T^\mu f_\ast \|_{\max} \le \|T_{q_{n_k}}^\mu -T^\mu\|\|f_{q_{n_k}}^\mu\|_{\max}+\|T^\mu\|\|f_{q_{n_k}}^\mu-f_\ast\|_{\max}\to 0.
 \end{align}
Therefore, the pair $(f_\ast,C_\ast)$ satisfies equation (\ref{limit-eq}). By the uniqueness of the solution to (\ref{limit-eq}), we have $(f_\ast,C_\ast)=(f^\mu,C^\mu)$. Since every subsequence has a further subsequence converging to the same limit $(f^\mu,C^\mu)$, the whole family $(f_q^\mu,C_q^\mu)$ converges to $(f^\mu,C^\mu)$. This completes the proof.
\end{proof}

We define
\begin{align}
\label{phimu-def}
  \varphi^\mu (x):=C^\mu +\int_K f^\mu(y)h(x-y)\,\mu(dy)\qquad \text{for}\ x\in \R.
\end{align}
This function is continuous.

Combining the preceding arguments, we obtain (\ref{tsuika-lim}).

\begin{prop}
\label{prop3.2}
  For every $x\in \R$,
  \begin{align}
  \label{prop3.2-eq1}
   \lim_{q\to 0+}r_q(0)\P_x\Big[\Gamma_{\bm{e}_q}^\mu\Big]=\varphi^\mu(x).
 \end{align}
This convergence is uniform on compact sets. In particular, $\varphi^\mu=f^\mu$ on $K.$
\end{prop}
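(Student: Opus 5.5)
We start from the identity derived in the proof of the first lemma of Section \ref{S2}, valid for every $x\in\R$ and not only on $K$:
\begin{align*}
\varphi_q^\mu(x)=C_q^\mu+\int_K \varphi_q^\mu(y)h_q(x-y)\,\mu(dy),\qquad C_q^\mu=r_q(0)\Big(1-\mu[\varphi_q^\mu]\Big).
\end{align*}
That lemma shows $(\varphi_q^\mu|_K,C_q^\mu)$ solves (\ref{q-eq}). The proposition on (\ref{q-eq}) gives uniqueness of its solution, so $\varphi_q^\mu|_K=f_q^\mu$ and $C_q^\mu$ coincides with the constant in Proposition \ref{prop3.1}. This identification is already used implicitly in the proof of Proposition \ref{prop3.1}; here we record it explicitly. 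Consequently
\begin{align*}
\varphi_q^\mu(x)=C_q^\mu+\int_K f_q^\mu(y)h_q(x-y)\,\mu(dy)\qquad\text{for }x\in\R .
\end{align*}

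Fix a compact set $D\subset\R$ and compare this with the definition (\ref{phimu-def}) of $\varphi^\mu$. For $x\in D$ we obtain
\begin{align*}
|\varphi_q^\mu(x)-\varphi^\mu(x)|\le |C_q^\mu-C^\mu|+\mu(K)\,\|f_q^\mu-f^\mu\|_{\max}\sup_{z\in D-K}h_q(z)+\mu(K)\,\|f^\mu\|_{\max}\sup_{z\in D-K}|h_q(z)-h(z)|.
\end{align*}
Proposition \ref{prop3.1} sends the first term and the factor $\|f_q^\mu-f^\mu\|_{\max}$ to $0$. Since $D-K$ is compact and $h_q\to h$ uniformly on compact sets (Theorem 1.1 of Takeda--Yano), $\sup_{D-K}h_q$ stays bounded for small $q$ and the last term tends to $0$. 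Hence (\ref{prop3.2-eq1}) holds uniformly on $D$.

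Finally, for $x\in K$, equation (\ref{limit-eq}) gives $\varphi^\mu(x)=C^\mu+T^\mu f^\mu(x)=f^\mu(x)$, so $\varphi^\mu=f^\mu$ on $K$. The only delicate point is the identification $\varphi_q^\mu|_K=f_q^\mu$, and it follows directly from the uniqueness statement; the rest is a routine triangle-inequality estimate.
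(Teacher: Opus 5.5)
Your proof is correct and is essentially the argument the paper has in mind when it says the limit follows by ``combining the preceding arguments''. It combines the identity $\varphi_q^\mu=C_q^\mu+\int_K \varphi_q^\mu(y)h_q(\cdot-y)\,\mu(dy)$ on all of $\R$, the identification $\varphi_q^\mu|_K=f_q^\mu$ via uniqueness (which the paper also uses in proving Proposition~\ref{prop3.1}), Proposition~\ref{prop3.1} itself, and locally uniform convergence $h_q\to h$, with the same triangle-inequality split the paper writes out in the subsequent $L^1$-convergence proposition; your direct derivation of $\varphi^\mu=f^\mu$ on $K$ from (\ref{limit-eq}) is also fine.
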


The above convergence also holds in $L^1(\P_x)$:

\begin{prop}
For $x\in \R$ and $t\ge 0$, we have
\begin{align}
r_q(0)\P_{X_t}\Big[\Gamma_{\bm{e}_q}^\mu\Big]\to\varphi^\mu(X_t)\qquad \text{in}\ L^1(\P_x).
 \end{align}
\end{prop}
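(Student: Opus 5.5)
We need $r_q(0)\P_{X_t}[\Gamma_{\bm{e}_q}^\mu]\to\varphi^\mu(X_t)$ in $L^1(\P_x)$. Pointwise convergence holds by Proposition \ref{prop3.2}. Since $\varphi_q^\mu$ and $\varphi^\mu$ are non-negative, Scheffé-type arguments (or Vitali) reduce $L^1$ convergence to two things: almost sure convergence, and convergence of expectations $\P_x[\varphi_q^\mu(X_t)]\to\P_x[\varphi^\mu(X_t)]$. Pointwise convergence gives the first. The second is where the work lies, since a uniform bound is unavailable: $\varphi^\mu$ grows like $h$.

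The plan is to represent both sides through the Fredholm identities. By (\ref{lem1-eq1}) multiplied by $r_q(0)$,
\begin{align*}
\varphi_q^\mu(z)=C_q^\mu+\int_K \varphi_q^\mu(y)h_q(z-y)\,\mu(dy),
\end{align*}
with $C_q^\mu=r_q(0)(1-\mu[\varphi_q^\mu])$, and $\varphi_q^\mu|_K=f_q^\mu$. Similarly, (\ref{phimu-def}) gives $\varphi^\mu(z)=C^\mu+\int_K f^\mu(y)h(z-y)\mu(dy)$.

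Hence
\begin{align*}
|\varphi_q^\mu(X_t)-\varphi^\mu(X_t)|\le |C_q^\mu-C^\mu|+\int_K |f_q^\mu-f^\mu|(y)\,h_q(X_t-y)\,\mu(dy)+\int_K f^\mu(y)\,|h_q-h|(X_t-y)\,\mu(dy).
\end{align*}
The first two terms are handled as follows. By Proposition \ref{prop3.1}, $C_q^\mu\to C^\mu$ and $\|f_q^\mu-f^\mu\|_{\max}\to0$. It then suffices that $\sup_{q<q_0}\sup_{y\in K}\P_x[h_q(X_t-y)]<\infty$.

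For the third term, it suffices by dominated convergence over $\mu(dy)$ to show $\P_x[|h_q-h|(X_t-y)]\to 0$ for each $y\in K$, with a uniform bound. So everything reduces to
\begin{align*}
\P_x\big[h_q(X_t-y)\big]\to \P_x\big[h(X_t-y)\big]<\infty,\qquad \text{uniformly bounded in } y\in K,\ q<q_0.
\end{align*}
Since $h_q\ge0$ and $h_q\to h$ pointwise, Scheffé's lemma then gives $L^1$ convergence of $h_q(X_t-y)$.

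The main obstacle is this integrability and convergence of $\P_x[h_q(X_t-y)]$. I would use the identity $h_q(z)=r_q(0)-r_q(-z)$ together with the Markov property:
\begin{align*}
\P_x[r_q(y-X_t)]=e^{qt}\P_x\Big[\int_t^\infty e^{-qs}dL_s^y\Big].
\end{align*}
This gives
\begin{align*}
\P_x[h_q(X_t-y)]=r_q(0)-e^{qt}\Big(r_q(y-x)-\P_x\Big[\int_0^t e^{-qs}dL_s^y\Big]\Big).
\end{align*}
Rewriting, this equals $(1-e^{qt})r_q(0)+e^{qt}h_q(x-y)+e^{qt}\P_x[\int_0^t e^{-qs}dL^y_s]$. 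As $q\to0+$, (\ref{rq-lim}) gives $qr_q(0)\to0$, hence $(1-e^{qt})r_q(0)\to0$. The limit is therefore $h(x-y)+\P_x[L_t^y]$, which is finite and locally bounded in $y$, since $\P_x[L_t^y]\le e^{t}r_1(y-x)\le e^t\|r_1\|_\infty$.

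By monotone convergence in the local time term and uniform convergence of $h_q$ on compacts, this convergence is uniform in $y\in K$. Fatou's lemma gives $\P_x[h(X_t-y)]\le h(x-y)+\P_x[L_t^y]$. For Scheffé's lemma I need equality of the limits, i.e. that the limit of the expectations equals $\P_x[h(X_t-y)]$. I would obtain this from the harmonicity-type identity $\P_x[h(X_t-y)]=h(x-y)+\P_x[L^y_t]$, which is the known martingale property $h(X_t-y)-L_t^y$ (cf.\ \cite{Takeda-Yano}). If that citation is unavailable, the fallback is to derive the identity directly by passing to the limit in the displayed formula above, using uniform integrability coming from subadditivity of $h$.

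Combining the three term bounds then yields $\P_x|\varphi_q^\mu(X_t)-\varphi^\mu(X_t)|\to0$.
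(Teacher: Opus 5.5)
Your proof is correct and follows the same skeleton as the paper's. Both arguments write $\varphi_q^\mu$ and $\varphi^\mu$ through their Fredholm representations, split the difference by the triangle inequality, and use Proposition \ref{prop3.1} for $C_q^\mu\to C^\mu$ and $\|f_q^\mu-f^\mu\|_{\max}\to 0$. Everything then reduces to $h_q(X_t-y)\to h(X_t-y)$ in $L^1(\P_x)$, with bounds uniform in $y\in K$. Attaching $h_q$ rather than $h$ to the $f_q^\mu-f^\mu$ term is immaterial.

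The only difference is that last ingredient: the paper cites Lemma 3.2 of Iba--Rivero, while you derive it. Your exact formula $\P_x[h_q(X_t-y)]=(1-e^{qt})r_q(0)+e^{qt}h_q(x-y)+\P_x[\int_0^t e^{q(t-s)}\,dL_s^y]$ is correct. It gives the uniform bound for $q<q_0$, $y\in K$, and identifies the limit as $h(x-y)+\P_x[L_t^y]$. Scheff\'e then closes the argument once you invoke $\P_x[h(X_t-y)]=h(x-y)+\P_x[L_t^y]$. That identity is the expectation form of the Tanaka formula for $h$ (Tsukada), so citing it is legitimate.

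Your stated fallback would not work as phrased. Subadditivity (of $h$ or of $h_q$) at best reduces uniform integrability of $h_q(X_t-y)$ under $\P_x$ to that of $(h_q(X_t))_q$ under $\P_0$, and it does not supply the latter.

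The right fallback is already in the proof of Proposition \ref{prop4.5}. There one has $0\le h_q\le H$ with $H$ continuous and independent of $q$, $H(a+b)\le 2H(a)+2H(b)$, and $\P_0[H(X_t)]<\infty$. Hence $\sup_{y\in K}\P_x[H(X_t-y)]\le 2\P_0[H(X_t)]+2\max_{y\in K}H(x-y)<\infty$. Dominated convergence then gives the required $L^1$ convergence at once, with no need for the Tanaka formula. What your computation adds is the explicit value of $\lim_{q\to 0+}\P_x[h_q(X_t-y)]$.
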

\begin{proof}
By (\ref{phiq-equation}) and (\ref{phimu-def}), we have
\begin{align*}
&\P_x\left[\Big|r_q(0)\P_{X_t}\Big[\Gamma_{\bm{e}_q}^\mu\Big]-\varphi^\mu(X_t)\Big|\right]\\
&\qquad \le |C_q^\mu-C^\mu|+\P_x\left[\left|\int_K f_q^\mu (y)h_q(X_t-y)\,\mu(dy)-\int_K f^\mu (y)h(X_t-y)\,\mu(dy)\right|\right]\\
&\qquad \le |C_q^\mu-C^\mu|+\|f_q^\mu-f^\mu\|_{\max}\int_K \P_x[h(X_t-y)]\,\mu(dy)\\
&\qquad \qquad+\Big(\|f_q^\mu-f^\mu\|_{\max}+\|f^\mu \|_{\max}\Big)\int_K \P_x\left[\Big|h_q(X_t-y)-h(X_t-y)\Big|\right]\,\mu(dy).
\stepcounter{equation}\tag{\theequation}
\end{align*}
By Proposition \ref{prop3.1} and Lemma 3.2 of Iba--Rivero \cite{Iba-Rivero}, the right-hand side converges to $0.$ This completes the proof.
\end{proof}

Combining the pointwise convergence with $L^1$-convergence, we obtain the following limit. We omit the proof, since similar proofs are given in previous studies of penalization problems (see, e.g., Theorems 4.2 and 4.4 of \cite{Takeda-Yano} or Theorem 3.4 of \cite{Iba-Yano}).

\begin{prop}
\label{prop3.4}
The process $(\varphi^\mu(X_t)\Gamma_t^\mu)_{t\ge 0}$ is a martingale. Moreover, we have
\begin{align}
   \lim_{q\to 0+}r_q(0)\P_x\Big[F_t\cdot \Gamma_{\bm{e}_q}^\mu\Big]=\P_x\Big[F_t\cdot \varphi^\mu(X_t)\Gamma_t^\mu\Big]
 \end{align}
 for any bounded $\F_t$-measurable functional $F_t$.
\end{prop}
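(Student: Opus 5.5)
The statement to prove is Proposition \ref{prop3.4}. The plan is the standard penalization argument: use the Markov property at time $t$ together with the memorylessness of $\bm{e}_q$, then pass to the limit using the $L^1$ convergence established in the previous proposition.

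\textbf{Step 1 (decomposition).} Fix $F_t\in b\F_t$. Split according to whether $\bm{e}_q\le t$ or $\bm{e}_q>t$. On $\{\bm{e}_q>t\}$ the memoryless property gives $\bm{e}_q=t+\bm{e}_q'$, where $\bm{e}_q'$ is an independent copy. Multiplicativity gives $\Gamma^\mu_{t+s}=\Gamma_t^\mu\cdot\Gamma_s^\mu\circ\theta_t$. By the Markov property,
\begin{align*}
r_q(0)\P_x\Big[F_t\Gamma_{\bm{e}_q}^\mu\Big]
&=r_q(0)\P_x\Big[F_t\Gamma_{\bm{e}_q}^\mu;\ \bm{e}_q\le t\Big]\\
&\qquad+e^{-qt}\P_x\Big[F_t\Gamma_t^\mu\cdot r_q(0)\P_{X_t}\big[\Gamma_{\bm{e}_q}^\mu\big]\Big].
\end{align*}
The first term is bounded by $\|F_t\|\,r_q(0)(1-e^{-qt})\le \|F_t\|\,t\,qr_q(0)$, which tends to $0$ by (\ref{rq-lim}).

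\textbf{Step 2 (limit of the second term).} We have $e^{-qt}\to1$, and $|F_t\Gamma_t^\mu|\le\|F_t\|$. By the preceding proposition, $r_q(0)\P_{X_t}[\Gamma_{\bm{e}_q}^\mu]\to\varphi^\mu(X_t)$ in $L^1(\P_x)$. Hence the second term converges to $\P_x[F_t\varphi^\mu(X_t)\Gamma_t^\mu]$. This gives the displayed limit. It also requires $\varphi^\mu(X_t)\in L^1(\P_x)$, which follows from $|\varphi^\mu(z)|\le |C^\mu|+\|f^\mu\|_{\max}\int_K h(z-y)\mu(dy)$ together with the integrability $\P_x[h(X_t-y)]<\infty$. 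The latter is implicit in Lemma 3.2 of Iba--Rivero already used in the previous proof.

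\textbf{Step 3 (martingale property).} Take $s\le t$ and $F_s\in b\F_s\subset b\F_t$. Applying Step 2 at both times gives
\[
\P_x[F_s\varphi^\mu(X_t)\Gamma_t^\mu]=\lim_{q\to0+} r_q(0)\P_x[F_s\Gamma_{\bm{e}_q}^\mu]=\P_x[F_s\varphi^\mu(X_s)\Gamma_s^\mu].
\]
Adaptedness is clear, and integrability was shown in Step 2. So $(\varphi^\mu(X_t)\Gamma_t^\mu)$ is a martingale.

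The only delicate point is the justification of the $L^1$ passage in Step 2, but that is already supplied by the previous proposition. The rest is bookkeeping, with one requirement: the exponential clock must be independent of $X$, so that the Markov property applies with $\bm{e}_q$ treated as a killing time.
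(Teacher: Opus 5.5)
Your proposal is correct and is essentially the argument the paper has in mind; the paper omits the proof, citing Takeda--Yano and Iba--Yano. You split on $\{\bm{e}_q\le t\}$, remove that part via $r_q(0)(1-e^{-qt})\le t\,qr_q(0)\to0$, use memorylessness, multiplicativity and the Markov property on $\{\bm{e}_q>t\}$, and conclude with the $L^1(\P_x)$ convergence of $r_q(0)\P_{X_t}[\Gamma_{\bm{e}_q}^\mu]$ from the preceding proposition. Reading off the martingale property by evaluating the same limit for $F_s\in b\F_s\subset b\F_t$ at times $s$ and $t$ is a valid shortcut, equivalent to taking $F_t\equiv1$ and combining Proposition~\ref{prop3.2} with the Markov property.
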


The penalization limit (\ref{main-penallim}) follows immediately from this proposition.

We now prove the remaining assertions of Theorem \ref{mainthm1}. We first show that the function appearing in the limit is strictly positive.

\begin{prop}
The function $\varphi^\mu$ is strictly positive on $\R$.
\end{prop}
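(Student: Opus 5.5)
The plan is to obtain non-negativity from the limit representation in Proposition \ref{prop3.2}, and then upgrade it to strict positivity using the martingale property of Proposition \ref{prop3.4} together with recurrence.

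\emph{Non-negativity.} Since $r_q(0)\P_x[\Gamma_{\bm{e}_q}^\mu]\ge 0$ for every $q>0$, Proposition \ref{prop3.2} gives $\varphi^\mu\ge 0$ on $\R$.

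\emph{$\varphi^\mu$ is not identically zero.} On $K$ we have $\varphi^\mu=f^\mu$ and $\mu[f^\mu]=1$. Hence there is some $y_0\in K$ with $\varphi^\mu(y_0)>0$. By continuity of $\varphi^\mu$, there is an open interval $J\ni y_0$ with $\varphi^\mu\ge c>0$ on $J$.

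\emph{Strict positivity at an arbitrary $x$.} Suppose $\varphi^\mu(x)=0$. By Proposition \ref{prop3.4}, $\varphi^\mu(X_t)\Gamma_t^\mu$ is a martingale, so
\begin{align}
0=\varphi^\mu(x)=\P_x\Big[\varphi^\mu(X_t)\Gamma_t^\mu\Big]\qquad \text{for all}\ t\ge 0.
\end{align}
The weight satisfies $\Gamma_t^\mu>0$ a.s., because $\int_K L_t^y\,\mu(dy)<\infty$ as a continuous additive functional. It follows that $\varphi^\mu(X_t)=0$ $\P_x$-a.s. for each $t$. Integrating in $t$ against $e^{-qt}\,dt$ gives
\begin{align}
0=\P_x\left[\int_0^\infty e^{-qt}\varphi^\mu(X_t)\,dt\right]\ge c\int_J r_q(y-x)\,dy.
\end{align}
To reach a contradiction, it remains to show that $\int_J r_q(y-x)\,dy>0$, i.e.\ that $X$ started at $x$ spends positive expected discounted time in $J$. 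This holds because the process is recurrent and, under (\textbf{A}), points are non-polar. Indeed $\P_x(T_{y_0}<\infty)=1$, and after $T_{y_0}$ the process stays in the open set $J$ for a positive time by right-continuity. The strong Markov property then gives positivity.

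\emph{Alternative route.} One could instead use the Fredholm representation
\begin{align}
\varphi^\mu(x)=C^\mu+\int_K\varphi^\mu(y)h(x-y)\,\mu(dy).
\end{align}
This route is awkward, however, because the sign of $C^\mu$ is not known a priori, which is why I prefer the martingale argument above.

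The main obstacle I expect is justifying rigorously that $X$ started at $x$ enters the neighbourhood $J$ of the positivity point with positive probability and spends positive time there. This requires recurrence plus hitting of points, for instance via $\P_x(T_{y_0}<\infty)=1$ for recurrent Lévy processes with local times. Some care is also needed with the claim $\Gamma_t^\mu>0$ a.s.
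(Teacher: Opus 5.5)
Your proof is correct, but it takes a different route from the paper's.

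\textbf{The paper's route.} It never uses the martingale property. It works with the approximants: the strong Markov property at $T_{x_0}$ and the multiplicativity of $\Gamma^\mu$ give
\begin{align*}
\P_x\Big[\Gamma^\mu_{\bm{e}_q}\Big]\ge \P_x\Big[e^{-qT_{x_0}}\Gamma^\mu_{T_{x_0}}\Big]\,\P_{x_0}\Big[\Gamma^\mu_{\bm{e}_q}\Big].
\end{align*}
Multiplying by $r_q(0)$ and letting $q\to 0+$ then yields the explicit Harnack-type bound $\varphi^\mu(x)\ge \P_x[\Gamma^\mu_{T_{x_0}}]\,\varphi^\mu(x_0)>0$.

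\textbf{Your route.} You argue by contradiction on the limit object. From Proposition \ref{prop3.4} and $\Gamma_t^\mu>0$ a.s. you get $\varphi^\mu(X_t)=0$ a.s. for every $t$, hence $U^q\varphi^\mu(x)=0$. This contradicts $\varphi^\mu\ge c$ on a neighbourhood $J$ of $y_0$ that the process reaches. This is valid, and two of your steps need less than you invoke.
\begin{itemize}
\item You only need the supermartingale inequality $\P_x[\varphi^\mu(X_t)\Gamma_t^\mu]\le\varphi^\mu(x)$, not the full Proposition \ref{prop3.4} (whose proof the paper omits). It follows from
\begin{align*}
r_q(0)\P_x\Big[\Gamma^\mu_{\bm{e}_q}\Big]\ge e^{-qt}\,\P_x\Big[\Gamma^\mu_t\, r_q(0)\P_{X_t}\Big[\Gamma^\mu_{\bm{e}_q}\Big]\Big],
\end{align*}
which comes from the memoryless property of $\bm{e}_q$, together with Fatou's lemma and Proposition \ref{prop3.2}.
\item Your final positivity step does not need the strong Markov property. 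The facts $T_{y_0}<\infty$ a.s., $X_{T_{y_0}}=y_0$, and right-continuity already make $\int_0^\infty e^{-qt}1_J(X_t)\,dt>0$ almost surely.
\end{itemize}

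\textbf{Trade-off.} Your argument uses the continuity of $\varphi^\mu$ and the (super)martingale property of the limit, but never passes a strong-Markov decomposition through the limit $q\to 0+$. The paper's argument is more direct, needs neither ingredient, and yields a quantitative lower bound for $\varphi^\mu(x)$ in terms of its value at a single point of $K$.
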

\begin{proof}
Since $r_q(0)\P_x[\Gamma_{\bm{e}_q}^\mu]\ge 0$, we have $\varphi^\mu(x)\ge 0$. Since
\begin{align}
1=\mu[f^\mu]=\int_K \varphi^\mu(y)\,\mu(dy),
\end{align}
there exists $x_0\in K$ such that $\varphi^\mu(x_0)>0.$ By recurrence, $T_{x_0}<\infty$, $\P_x$-a.s., for every $x\in\R$. We have
  \begin{align*}
  \P_{x}\Big[\Gamma_{\bm{e}_q}^\mu\Big]=\P_x\left[\int_0^\infty qe^{-qt}\Gamma_t^\mu\,dt\right]\ge \P_x\left[\int_{T_{x_0}}^\infty qe^{-qt}\Gamma_t^\mu \,dt\right]=\P_x\Big[e^{-qT_{x_0}}\Gamma_{T_{x_0}}^\mu\Big]\P_{x_0}\Big[\Gamma_{\bm{e}_q}^\mu\Big]
    \stepcounter{equation}\tag{\theequation}
  \end{align*}
  by the strong Markov property and the multiplicative property of $\Gamma_t^\mu$. Thus, by the dominated convergence theorem, we have
  \begin{align*}
    \varphi^\mu(x)\ge \lim_{q\to 0+}\P_x\Big[e^{-qT_{x_0}}\Gamma_{T_{x_0}}^\mu\Big]\cdot r_q(0)\P_{x_0}\Big[\Gamma_{\bm{e}_q}^\mu\Big]=\P_x\Big[\Gamma_{T_{x_0}}^\mu\Big]\varphi^\mu(x_0)>0.
    \stepcounter{equation}\tag{\theequation}
  \end{align*}
  This completes the proof.
\end{proof}

Finally, we consider the case where $\mu$ is a positive multiple of the equilibrium measure $\mu_K$.

\begin{prop}
\label{cor3.7}
Let $c> 0$. If $\mu=c\mu_K$, then
\begin{align}
   \varphi^{c\mu_K}(x)=\varphi_K(x)+\frac{1}{c}.
 \end{align}
\end{prop}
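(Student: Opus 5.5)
The plan is to exhibit an explicit candidate pair $(C,f)$ built from $\varphi_K$ and then invoke uniqueness of the solution to the Fredholm equation (\ref{limit-eq}). The candidate is
\begin{align}
f:=\Big(\varphi_K+\frac{1}{c}\Big)\Big|_K .
\end{align}
We will check that $(I-T^{c\mu_K})f$ equals a constant and that $c\mu_K[f]=1$.

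\textbf{Step 1: $\varphi_K$ vanishes on $K$.} Since $T_K=0$ $\P_x$-a.s. for $x\in K$, we get $r_q(0)\P_x(\bm{e}_q<T_K)=0$ there. Continuity of $\varphi_K$ is given by (\ref{Iba-Rivero}) and the continuity of $h$. Strictly speaking this needs points of $K$ to be regular for $K$. Points are regular for themselves under Hypothesis \ref{Hyp}, since $0$ is regular for $\{0\}$ when condition (A) holds and local times exist. Hence $f\equiv 1/c$ on $K$.

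\textbf{Step 2: use the representation (\ref{Iba-Rivero}) on $K$.} For $x\in K$ it gives
\begin{align}
\int_K h(x-y)\,\mu_K(dy)=k(K)\qquad \text{for } x\in K.
\end{align}
Therefore
\begin{align}
T^{c\mu_K}f(x)=\frac{1}{c}\cdot c\int_K h(x-y)\,\mu_K(dy)=k(K).
\end{align}
So $(I-T^{c\mu_K})f=\frac1c-k(K)$, which is a constant, and $c\mu_K[f]=c\cdot\frac1c\cdot\mu_K(K)=1$. By the uniqueness propositions of Section \ref{S2}, $f^{c\mu_K}=f$ and $C^{c\mu_K}=\frac1c-k(K)$. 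The two cases (invertible or $1$ in the spectrum) need not be separated, because uniqueness holds in both.

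\textbf{Step 3: extend to all of $\R$ via (\ref{phimu-def}).} For $x\in\R$,
\begin{align}
\varphi^{c\mu_K}(x)=\frac1c-k(K)+\int_K \frac1c h(x-y)\,c\,\mu_K(dy)=\frac1c+\varphi_K(x),
\end{align}
where the last equality is again (\ref{Iba-Rivero}).

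\textbf{Main obstacle.} The only delicate point is Step 1, namely that $\varphi_K=0$ at every point of $K$ and not merely quasi-everywhere. I would justify it through regularity of points, or alternatively by noting that continuity of $\varphi_K$ forces $\int_K h(x-y)\,\mu_K(dy)=k(K)$ on all of $\mathrm{supp}\,\mu_K$. This is then enough, because $\mathrm{supp}(c\mu_K)$ is what plays the role of $K$ in the Fredholm equation.
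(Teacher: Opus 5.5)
Your proposal is correct and follows the paper's proof. You check that the constant $1/c$ on $K$ solves (\ref{limit-eq}) with $C=\frac1c-k(K)$, using $\varphi_K=0$ on $K$ together with (\ref{Iba-Rivero}), then invoke uniqueness and extend to all of $\R$ via (\ref{phimu-def}) and (\ref{Iba-Rivero}). Your Step 1 only spells out the fact $\varphi_K|_K=0$, which the paper uses without comment; the regularity-of-points justification is valid, since (A) rules out bounded variation and so, with the Kesten--Bretagnolle condition, points are regular for themselves.
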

\begin{proof}
Since $\varphi_K(x)=0$ for $x\in K$, by (\ref{Iba-Rivero}), we have
\begin{align}
  T^{c\mu_K} 1(x)=c\int_K h(x-y)\,\mu_K(dy)=ck(K)\qquad \text{for}\ x\in K.
\end{align}
Thus, the pair $(\frac{1}{c},\frac{1}{c}-k(K))$ satisfies equation (\ref{limit-eq}). By the uniqueness of the solution, we have
\begin{align}
  \varphi^{c\mu_K}(x)=\frac{1}{c}\qquad \text{for}\ x\in K.
\end{align}
Therefore, by (\ref{Iba-Rivero}), we have
\begin{align}
\varphi^{c\mu_K}(x)=\frac{1}{c}-k(K)+\int_K \frac{1}{c}h(x-y)\,c\mu_K(dy)=\varphi_K(x)+\frac{1}{c}\qquad \text{for}\ x\in \R.
\end{align}
This completes the proof.
\end{proof}

This completes the proof of Theorem \ref{mainthm1}.


\section{Fredholm and Sturm--Liouville Equations}
\label{S4}
In this section, we prove Theorems \ref{mainthm2} and \ref{mainthm3}. In the course of the proof, we establish a number of lemmas. Since the proofs of some of them are lengthy and would make this section unnecessarily cumbersome, we defer them to Appendix \ref{Appendix}.

\subsection{Fredholm Equation Implies Sturm--Liouville Equation}
\label{S4.1}
In this subsection, we show that any function satisfying the Fredholm equation (\ref{main-Fredeq}) and the normalization condition (\ref{main-Frednorm}) also satisfies the Sturm--Liouville equation (\ref{main-SLeq}) and the growth condition (\ref{main-SLgrowth}).

We first state two basic lemmas concerning the infinitesimal generator of a \Levy\ process. Their proofs are deferred to Appendix \ref{Appendix}.

\begin{lem}
\label{lem4.1}
For $f\in C_c^\infty(\R)$, we have $\mathscr{L}f\in L^1(\R).$
\end{lem}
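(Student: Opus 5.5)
We have to show that $\mathscr{L}f\in L^1(\R)$ for $f\in C_c^\infty(\R)$. The plan is to work directly from formula (\ref{Lf-equation}). Fix $f\in C_c^\infty(\R)$ with $\mathrm{supp}(f)\subset[-R,R]$, and split $\mathscr{L}f=A f+Bf+Df$, where
\begin{align}
Af(x)&:=-af'(x)+\frac{\sigma^2}{2}f''(x)+\int_{\{|y|<1\}}\Big(f(x+y)-f(x)-yf'(x)\Big)\,\Pi(dy),\\
Bf(x)&:=\int_{\{|y|\ge 1\}}f(x+y)\,\Pi(dy),\qquad Df(x):=-f(x)\,\Pi(\{|y|\ge1\}).
\end{align}

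\emph{The local part $Af$.} Each integrand in $Af(x)$ vanishes unless $|x|\le R+1$. Indeed, for $|x|>R+1$ and $|y|<1$ we have $f(x)=f'(x)=f''(x)=0$ and $f(x+y)=0$. So $Af$ is supported in $[-R-1,R+1]$. By Taylor's theorem,
\begin{align}
|f(x+y)-f(x)-yf'(x)|\le \tfrac12\|f''\|_{\max}y^2,
\end{align}
and $\int_{\{|y|<1\}}y^2\,\Pi(dy)<\infty$. Hence $Af$ is bounded and compactly supported, and therefore integrable.

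\emph{The large-jump part.} The term $Df$ is a bounded multiple of $f$, because $\Pi(\{|y|\ge1\})<\infty$. It is therefore in $L^1$. For $Bf$ we use Tonelli:
\begin{align}
\int_\R |Bf(x)|\,dx\le \int_{\{|y|\ge1\}}\int_\R|f(x+y)|\,dx\,\Pi(dy)=\|f\|_{L^1}\,\Pi(\{|y|\ge1\})<\infty.
\end{align}

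\emph{Measurability.} The one point needing care is that $\mathscr{L}f$ is a genuine measurable function to which Tonelli applies. Joint measurability of $(x,y)\mapsto f(x+y)$ together with the finiteness bounds above handles this. In fact $\mathscr{L}f\in C_0(\R)$ already, since $C_c^\infty(\R)\subset\mathscr{D}(\mathscr{L})$.

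Summing the three pieces gives $\mathscr{L}f\in L^1(\R)$. Note that Hypothesis \ref{Hyp} is not needed here; only the integrability properties of the L\'evy measure $\Pi$ are used.
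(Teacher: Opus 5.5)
Your proof is correct and follows essentially the same route as the paper: split formula (\ref{Lf-equation}) into drift, diffusion, compensated small-jump and large-jump parts, and control the large jumps by Tonelli using $\Pi(\{|y|\ge 1\})<\infty$ to get the bound $2\|f\|_{L^1}\Pi(\{|y|\ge1\})$. The only difference is minor. For the small-jump term the paper integrates the integral-form Taylor remainder over $x$ to get the bound $\frac{1}{2}\|f''\|_{L^1}\int_{\{|y|<1\}}y^2\,\Pi(dy)$, whereas you note that this term vanishes outside $[-R-1,R+1]$ and is bounded there, which works equally well.
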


\begin{lem}
\label{lem4.2}
For $f\in C_c^\infty(\R)$, we have 
 \begin{align}
   \lim_{t\to 0+}\frac{P_tf-f}{t}= \mathscr{L} f\qquad \text{in}\ L^1(\R).
 \end{align}
\end{lem}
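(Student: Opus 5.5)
The claim is Lemma \ref{lem4.2}: for $f\in C_c^\infty(\R)$, $(P_tf-f)/t\to\mathscr{L}f$ in $L^1(\R)$. I would reduce it to an integral-in-time representation together with translation continuity in $L^1$.

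\textbf{Step 1: Dynkin representation.} Since $f\in\mathscr{D}(\mathscr{L})$, we have the norm identity
\begin{align}
P_tf-f=\int_0^t P_s\mathscr{L}f\,ds .
\end{align}
Both sides are continuous functions, so it holds pointwise. Lemma \ref{lem4.1} gives $\mathscr{L}f\in L^1(\R)\cap C_0(\R)$. Moreover, $P_s g(x)=\P_0[g(x+X_s)]$ is an average of translates of $g$. Hence Minkowski's integral inequality gives $\|P_sg\|_{L^1}\le\|g\|_{L^1}$, and
\begin{align}
\left\|\frac{P_tf-f}{t}-\mathscr{L}f\right\|_{L^1}\le \frac1t\int_0^t\|P_s\mathscr{L}f-\mathscr{L}f\|_{L^1}\,ds .
\end{align}
To justify this step, I would use Fubini for the nonnegative integrand $|P_s\mathscr{L}f(x)-\mathscr{L}f(x)|$ after writing the difference inside the time integral.

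\textbf{Step 2: strong continuity on $L^1$.} Set $g=\mathscr{L}f\in L^1$. Then
\begin{align}
\|P_sg-g\|_{L^1}\le \P_0\big[\|g(\cdot+X_s)-g\|_{L^1}\big].
\end{align}
The map $y\mapsto\|g(\cdot+y)-g\|_{L^1}$ is bounded by $2\|g\|_{L^1}$ and continuous, with value $0$ at $y=0$, by continuity of translation in $L^1$. Since $X_s\to0$ in probability as $s\to0+$, dominated convergence gives $\|P_sg-g\|_{L^1}\to0$. The Cesàro average in Step 1 then tends to $0$, which concludes the proof.

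\textbf{Main obstacle.} The key input is $\mathscr{L}f\in L^1$, which is Lemma \ref{lem4.1} and is assumed. The remaining delicate point is the measurability and Fubini justification in Step 1. That needs joint measurability of $(s,x)\mapsto P_s g(x)$, which follows from the joint measurability of $(s,\omega)\mapsto X_s(\omega)$, itself a consequence of the càdlàg paths. If the Dynkin formula in $C_0$ felt insufficient, I would verify $P_tf-f=\int_0^tP_s\mathscr{L}f\,ds$ pointwise directly by differentiating $s\mapsto P_sf(x)$ using $P_s\mathscr{L}f=\mathscr{L}P_sf$.
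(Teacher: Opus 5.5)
Your proposal is correct and follows the paper's proof: the paper likewise writes $P_tf-f=\int_0^tP_s\mathscr{L}f\,ds$ (via Dynkin's formula) and bounds the $L^1$ error by the Ces\`aro average $\frac1t\int_0^t\|P_s\mathscr{L}f-\mathscr{L}f\|_{L^1}\,ds$. The only difference is that the paper cites strong continuity of $(P_t)$ on $L^1(\R)$ (Berg--Forst), whereas you prove it directly from continuity of translations in $L^1$ and stochastic continuity of $X$, which is a valid self-contained substitute.
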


The following lemma shows that $\mathscr{L}h$ is meaningful in the weak sense.

\begin{lem}
\label{lem4.3}
  For $y\in \R$ and $f\in C_c^\infty(\R)$, we have
\begin{align}
   \int_\R h(x-y)|\widehat{\mathscr{L}}f(x)|\,dx<\infty.
\end{align}
Consequently, 
\begin{align}
\langle \mathscr{L} h(\cdot-y),f\rangle =\int_\R h(x-y)\widehat{\mathscr{L}}f(x)dx
\end{align}
is well-defined.
\end{lem}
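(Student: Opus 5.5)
Since $h(x-y)$ is a translate of $h$ and the substitution $x\mapsto x+y$ turns the integral into $\int_\R h(x)|\widehat{\mathscr{L}}f(x+y)|\,dx$, with $f(\cdot+y)\in C_c^\infty(\R)$ and $\widehat{\mathscr{L}}$ commuting with translations, it suffices to take $y=0$. The plan is to split $\R$ into a compact neighborhood of $\mathrm{supp}(f)$ and its complement. On a compact set $h$ is bounded because it is continuous, and $\widehat{\mathscr{L}}f\in L^1(\R)$ by Lemma \ref{lem4.1} applied to the dual process $\widehat{X}$, which is again a \Levy\ process with triplet $(-a,\sigma^2,\widehat{\Pi})$. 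So the integral over the compact part is finite, and the whole matter reduces to controlling the tail $\int_{|x|>R} h(x)|\widehat{\mathscr{L}}f(x)|\,dx$.

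Fix $R$ with $\mathrm{supp}(f)\subset[-R/2,R/2]$. For $|x|>R$ we have $f(x)=f'(x)=f''(x)=0$, so only the jump part of $\widehat{\mathscr{L}}f$ survives and
\begin{align}
\widehat{\mathscr{L}}f(x)=\int_\R f(x+z)\,\widehat{\Pi}(dz)=\int_\R f(x-z)\,\Pi(dz).
\end{align}
Only jumps with $|z|\ge R/2$ contribute, because $x-z$ must land in $\mathrm{supp}(f)$. Applying Fubini, the tail is bounded by
\begin{align}
\int_{|z|\ge R/2}\int_\R h(x)|f(x-z)|\,dx\,\Pi(dz)=\int_{|z|\ge R/2}\int_\R h(u+z)|f(u)|\,du\,\Pi(dz).
\end{align}
Subadditivity of $h$ gives $h(u+z)\le h(u)+h(z)$, and $h(u)$ is bounded on $\mathrm{supp}(f)$. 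The bound therefore becomes $\|f\|_{L^1}\int_{|z|\ge R/2}(C+h(z))\,\Pi(dz)$, and everything reduces to showing
\begin{align}
\int_{|z|\ge 1} h(z)\,\Pi(dz)<\infty.
\end{align}

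I expect this last moment condition to be the main obstacle. If $\P_0[X_1^2]<\infty$, then $h(z)\le C(1+|z|)$ by the linear growth asymptotics of $h$, and $\int_{|z|\ge1}|z|\,\Pi(dz)<\infty$ follows from the finite second moment. In general, however, $h$ can grow sublinearly, as in stable processes where $h(z)\asymp|z|^{\alpha-1}$, and then a direct moment estimate on $\Pi$ is unavailable. My first attempt would be to derive the bound from the harmonicity or excessiveness of $h$. One uses $h(x)=\P_x[h(X_t),\,t<T_0]$, or the identity $\mathscr{L}h=\delta_0$ suitably interpreted, obtained at the $q$-level from $h_q=r_q(0)-r_q(-\cdot)$ with $r_q$ the resolvent density. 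Since $U^q(qI-\mathscr{L})=I$ by (\ref{qI-L}), this gives
\begin{align}
\int r_q(x)\,(q-\widehat{\mathscr{L}})g(x)\,dx=g(0),
\end{align}
and the corresponding identity for $h_q$ (up to sign and reflection) is $\int h_q(-x)\widehat{\mathscr{L}}g(x)\,dx=g(0)-q\int r_q(x)g(x)\,dx$. Choosing $g\ge0$ and using that $\widehat{\mathscr{L}}g\ge 0$ away from $\mathrm{supp}(g)$, Fatou's lemma as $q\to0+$ together with the pointwise convergence $h_q\to h$ and the bound $q\int r_q g\to0$ (from $qr_q(0)\to0$ in (\ref{rq-lim})) should give finiteness of the tail integral directly. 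The compact part is handled by the uniform convergence of $h_q$ to $h$ on compacts. Finally, a general $f\in C_c^\infty(\R)$ is dominated in the tail by such nonnegative $g$, since the tail kernel is $\int f(x-z)\,\Pi(dz)$, which is monotone in $f$. This Fatou route bypasses the moment condition entirely, and I would pursue it first.
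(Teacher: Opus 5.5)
Your proposal is correct, but the step that closes it differs from the paper's. Up to the subadditivity reduction to $\int_{|z|\ge 1}h(z)\,\Pi(dz)<\infty$ you follow the paper. The paper proves this moment bound directly: $1+h$ is sub-multiplicative, $\P_0[h(X_t)]<\infty$ by Tsukada's Theorem 15.2, and Sato's Theorem 25.3 transfers finiteness to the L\'evy measure. You leave that bound open and switch to the Fatou argument, and that argument works.

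Take $0\le g\in C_c^\infty(\R)$ with $\mathrm{supp}(g)\subset[-R,R]$. The $q$-level identity $\int_\R h_q(x)\widehat{\mathscr{L}}g(x)\,dx=g(0)-q\widehat{U}^qg(0)$ is the computation in the proof of Proposition \ref{prop4.5}. It needs $\int_\R\widehat{\mathscr{L}}g\,dx=0$ to remove the $r_q(0)$ term, which you do not mention. Combine it with:
\begin{itemize}
\item $h_q\widehat{\mathscr{L}}g\ge 0$ on $\{|x|>R\}$;
\item uniform convergence $h_q\to h$ on $[-R,R]$;
\item $0\le q\widehat{U}^qg(0)\le qr_q(0)\|g\|_{L^1}\to 0$.
\end{itemize}
Fatou then gives $\int_{|x|>R}h\,\widehat{\mathscr{L}}g\,dx<\infty$. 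Your domination $|\widehat{\mathscr{L}}f|\le\widehat{\mathscr{L}}g$ off $\mathrm{supp}(g)$, for $g\ge|f|$, finishes the proof.

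Fix the orientation, though. The correct resolvent identity is $\int_\R r_q(-x)(q-\widehat{\mathscr{L}})g(x)\,dx=g(0)$. So it is $h_q(x)$, not $h_q(-x)$, that pairs with $\widehat{\mathscr{L}}$, which is exactly the orientation the lemma needs. With $h_q(-x)$ you would bound the reflected integral, a different quantity when $X$ is not symmetric.

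As for what each route buys: yours uses only facts already in the paper (the resolvent identity, (\ref{rq-lim}), $h_q\ge 0$, local uniform convergence) and no external moment theorems. Via the reverse subadditivity bound $h(a)\ge h(a+b)-h(b)$, it even recovers $\int_{|z|\ge1}h(z)\,\Pi(dz)<\infty$ as a corollary. The paper's route isolates that moment bound as a standalone fact. Its sub-multiplicativity mechanism is then reused for the dominating function $H$ in Proposition \ref{prop4.5}. That domination is still needed there, because Fatou alone yields only an inequality, not the identity $\int_\R h(x-y)\widehat{\mathscr{L}}f(x)\,dx=f(y)$.
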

\begin{proof}
Recall that $(\widehat{a},\widehat{\sigma}^2,\widehat{\Pi})$ denotes the \Levy\ triplet of the dual process. By (\ref{Lf-equation}), we have
\begin{align*}
  &h(x-y)|\widehat{\mathscr{L}}f(x)|\\
&\qquad \le  h(x-y)|\widehat{a}f'(x)|+\frac{\widehat{\sigma}^2}{2}h(x-y)|f''(x)|\\
&\qquad \qquad +\int_{|z|<1} h(x-y)\Big|f(x+z)-f(x)-z f'(x)\Big|\,\widehat{\Pi}(dz)\\
&\qquad \qquad +\int_{|z|\ge 1} h(x-y)\Big|f(x+z)-f(x)\Big|\,\widehat{\Pi}(dz).
\stepcounter{equation}\tag{\theequation}
\end{align*}
Since $f'$ and $f''$ have compact support and $h$ is continuous, the first and second terms of the right-hand side are integrable. By Taylor's theorem, the definition of the \Levy\ measure $\widehat{\Pi}$, and the compact support of $f''$, we have
\begin{align*}
 &\int_\R h(x-y)\,dx \int_{|z|<1}\Big|f(x+z)-f(x)-z f'(x)\Big|\,\widehat{\Pi}(dz)\\
  &\qquad \le \int_{|z|<1}z^2\,\widehat{\Pi}(dz)\int_0^1 (1-\theta)\,d\theta \int_\R h(x-y)|f''(x+\theta z)|\,dx<\infty.
  \stepcounter{equation}\tag{\theequation}
\end{align*}
We next consider the fourth term. By the subadditivity of $h$ and by $\widehat{\Pi}(-dz)=\Pi(dz)$, we have
\begin{align*}
\label{lem4.3-eq1}
  &\int_\R dx\int_{|z|\ge 1}h(x-y)\Big|f(x+z)-f(x)\Big|\,\widehat{\Pi}(dz)\\
  &\qquad \le \int_{|z|\ge 1}\,\widehat{\Pi}(dz)\int_\R h(x-y)|f(x)|\,dx+\int_{|z|\ge 1}\,\widehat{\Pi}(dz)\int_\R h(x-z-y)|f(x)|\,dx\\
  &\qquad \le 2\int_{|z|\ge 1}\,\widehat{\Pi}(dz)\int_\R h(x-y)|f(x)|\,dx+\int_{|z|\ge 1}h(z)\Pi(dz)\int_\R |f(x)|\,dx.
  \stepcounter{equation}\tag{\theequation}
\end{align*}
Since $f$ has compact support, the first term of the right-hand side is finite. Let $g(x):=1+h(x)$. Since 
\begin{align*}
\label{submultiplicative}
  g(x+y)\le 1+h(x)+h(y)\le 1+h(x)+h(y)+h(x)h(y)=g(x)g(y),
  \stepcounter{equation}\tag{\theequation}
\end{align*}
the function $g$ is sub-multiplicative. Thus, by Theorem 25.3 of \cite{Sato}, we have the following equivalence:
\begin{align}
  \P_0[g(X_t)]<\infty\ \text{for}\ t\ge 0\qquad \Leftrightarrow\qquad \int_{|z|\ge 1}g(z)\,\Pi(dz)<\infty.
\end{align}
Since $\P_0[h(X_t)]<\infty$ (see Theorem 15.2 of Tsukada \cite{Tsukada}), the left-hand condition in this equivalence holds. Thus, the second term of the right-hand side of (\ref{lem4.3-eq1}) is finite. This completes the proof.
\end{proof}

Next, we show that the renormalized zero resolvent $h$ can be regarded as a fundamental solution of the extended operator $\mathscr{L}$ defined in (\ref{Def-exL}). In the case of stable processes, this has already been proved in Lemma 3.1 of Tsukada \cite{Tsukada-Stable}.

\begin{prop}
\label{prop4.5}
For $y\in \R$, we have
\begin{align}
\mathscr{L} h(\cdot-y)=\delta_y,
\end{align}
that is, for $f\in C_c^\infty(\R)$, we have
\begin{align}
\int_\R h(x-y)\widehat{\mathscr{L}}f(x)\,dx=f(y).
\end{align}
\end{prop}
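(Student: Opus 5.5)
By translation invariance of $\widehat{\mathscr{L}}$ (it commutes with shifts), it suffices to treat $y=0$. Replacing $f$ by $f(\cdot+y)$ reduces the general case to this one. The plan is to approximate $h$ by $h_q(x)=r_q(0)-r_q(-x)$ and use the resolvent identity (\ref{qI-L}).

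\textbf{Step 1: the identity at level $q$.} Fix $f\in C_c^\infty(\R)$ and $q>0$. The constant $r_q(0)$ integrates against $\widehat{\mathscr{L}}f$ to zero. Indeed, $\int_\R \widehat{\mathscr{L}}f(x)\,dx=\langle \widehat{\mathscr{L}}f,1\rangle$, and by Lemma \ref{lem4.2} (applied to $\widehat X$) this equals $\lim_{t\to0+}t^{-1}\int_\R(\widehat P_tf-f)\,dx=0$, since $\widehat P_t$ preserves Lebesgue integrals. Next, $r_q(-x)$ is the density of $\widehat U^q$ evaluated at $0$: $\widehat U^q g(0)=\int_\R g(x)r_q(-x)\,dx$, because the dual process has resolvent density $\widehat r_q(z)=r_q(-z)$. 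Applying (\ref{qI-L}) to the dual process with $g=f$ gives $\widehat U^q(q f-\widehat{\mathscr{L}}f)(0)=f(0)$. Hence
\begin{align}
\int_\R h_q(x)\widehat{\mathscr{L}}f(x)\,dx=-\int_\R r_q(-x)\widehat{\mathscr{L}}f(x)\,dx=f(0)-q\int_\R r_q(-x)f(x)\,dx .
\end{align}
Here $\widehat{\mathscr{L}}f\in L^1$ by Lemma \ref{lem4.1}, and $r_q$ is bounded, so every integral is finite.

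\textbf{Step 2: the limit $q\to0+$.} The last term is bounded by $q r_q(0)\|f\|_{L^1}$, using $|r_q|\le r_q(0)$, which holds because $r_q(x)=r_q(0)\P_{0}(\text{hit }x\text{ before }\bm e_q)$-type bounds give $r_q\le r_q(0)$. This tends to $0$ by (\ref{rq-lim}). It remains to prove
\begin{align}
\int_\R h_q(x)\widehat{\mathscr{L}}f(x)\,dx\to\int_\R h(x)\widehat{\mathscr{L}}f(x)\,dx .
\end{align}
This is the main obstacle, because $h_q\to h$ only locally uniformly while $\widehat{\mathscr{L}}f$ is not compactly supported: its tails come from the big-jump part $\int_{|z|\ge1}f(x+z)\,\widehat\Pi(dz)$.

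\textbf{Step 3: domination.} For $x$ outside a compact set containing $\mathrm{supp} f$, we have $\widehat{\mathscr{L}}f(x)=\int f(x+z)\,\widehat\Pi(dz)$. I will dominate $h_q$ uniformly in small $q$ by $C(1+h)$, or by $C(1+|x|)$-type bounds. One route uses subadditivity, which should also hold for $h_q$ by the same argument as in Takeda--Yano, since $h_q(x)=r_q(0)\P_{-x}$-type expressions are subadditive via the strong Markov property. This gives $h_q(x)\le h_q(x+z)+h_q(-z)$, and by Fubini
\begin{align}
\int_{|x|>R}h_q(x)|\widehat{\mathscr{L}}f(x)|\,dx\le\int_{|z|\ge1}\!\int_\R (h_q(w)+h_q(-z))|f(w)|\,dw\,\widehat\Pi(dz)\,1_{\{\cdots\}},
\end{align}
restricted to the $z$ with $|z|$ large when $R$ is large. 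Using the local uniform bound on $h_q$ over $\mathrm{supp}f$, it remains to control $\int_{|z|\ge M}h_q(z)\,\Pi(dz)$ uniformly in $q$. If pointwise domination by $h$ is unavailable, I would use $h_q\le h+$ something, or the monotonicity of $h_q$ in $q$ when it holds, or else a Fatou argument combined with the integrability $\int_{|z|\ge1}h\,d\Pi<\infty$ from the proof of Lemma \ref{lem4.3}. Alternatively, I would pass to Fourier: since $\widehat{\widehat{\mathscr{L}}f}$ is $-\Psi\widehat f$ up to conjugation conventions, (\ref{hq-def}) gives $\int h_q\widehat{\mathscr{L}}f=\frac1{\pi}\int_0^\infty\Re\big(\Psi(\lambda)\overline{\widehat f(\lambda)}/(q+\Psi(\lambda))\big)d\lambda$ up to the vanishing constant term. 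The integrand is bounded by $|\widehat f(\lambda)|$, which is integrable, and it converges to $\Re\overline{\widehat f}$. Dominated convergence then gives $\frac1\pi\int_0^\infty\Re\widehat f=f(0)$ directly by Fourier inversion, bypassing Step 2's tail issue. In that case only the identification of the $h$-integral as the limit still needs the tail control. I would try this Fourier route first and use subadditivity only for the tail.
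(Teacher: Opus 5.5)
\textbf{There is a genuine gap.} Your Step 1 and the estimate $q\,r_q(0)\|f\|_{L^1}\to0$ in Step 2 are what the paper does: Lebesgue invariance plus Lemma \ref{lem4.2} remove the constant $r_q(0)$, then you apply the resolvent identity (\ref{qI-L}) for the dual process. The gap is exactly where you put the main obstacle. You never produce a $q$-independent majorant of $h_q(x-y)\,|\widehat{\mathscr{L}}f(x)|$ in $L^1(\R)$, so the passage $\int h_q\widehat{\mathscr{L}}f\to\int h\widehat{\mathscr{L}}f$ is unproved, and none of your fallbacks closes it:
\begin{enumerate}
\item The bound $h_q\le h$ holds for symmetric processes, where $\frac{1}{q+\Psi}$ is real and increases as $q\downarrow0$. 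It is not available in general. The integrand of $h_q(x)$ contains $\sin(\lambda x)\,\Im\frac{1}{q+\Psi(\lambda)}$, which has no fixed sign, and even $\Re\frac{1}{q+\Psi}$ is not monotone in $q$ where $|\Im\Psi|>\Re\Psi$.
\item Fatou's lemma points the wrong way. It bounds $\int h\,|\widehat{\mathscr{L}}f|$ by a $\liminf$ of $h_q$-integrals, whereas dominated convergence needs uniform control of the tails of $h_q|\widehat{\mathscr{L}}f|$.
\item Subadditivity of $h_q$ is true; it follows from $r_q(a)r_q(b)\le r_q(0)r_q(a+b)$. Combined with local uniform convergence, however, it only gives $h_q(z)\le C(1+|z|)$ uniformly in small $q$. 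That requires $\int_{|z|\ge1}|z|\,\Pi(dz)<\infty$, which can fail under Hypothesis \ref{Hyp}. An example is a symmetric process with a Gaussian part and $\Pi(dz)=|z|^{-2}1_{\{|z|\ge1\}}dz$, for which $\Psi(\lambda)\sim\pi|\lambda|$ at $0$.
\item As you concede, the Fourier computation only identifies $\lim_{q\to0+}\int h_q\widehat{\mathscr{L}}f$, which Steps 1--2 already give. It leaves the tail problem untouched.
\end{enumerate}

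\textbf{The missing idea} is the $q$-free majorant the paper uses. Since $h_q\ge0$, and $\Re\Psi\ge0$ gives $|q+\Psi|\ge|\Psi|$, you have
\[
0\le h_q(z)\le h_q(z)+h_q(-z)=\frac{2}{\pi}\int_0^\infty(1-\cos\lambda z)\,\Re\frac{1}{q+\Psi(\lambda)}\,d\lambda\le\frac{2}{\pi}\int_0^\infty\frac{1-\cos\lambda z}{|\Psi(\lambda)|}\,d\lambda=:H(z).
\]
Here $H$ is finite and continuous by Lemma 15.5 of Tsukada. The symmetrization is what makes this work: it replaces $1-e^{i\lambda z}$ by the nonnegative $1-\cos\lambda z$, so the $q$-dependence can be dropped.

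To show $H$ is a valid dominating function:
\begin{enumerate}
\item From $1-\cos(a+b)\le 2(1-\cos a)+2(1-\cos b)$ you get $H(a+b)\le 2H(a)+2H(b)$, so $2+H$ is submultiplicative.
\item The proof of Tsukada's Theorem 15.2 gives $\P_0[H(X_t)]<\infty$. Sato's Theorem 25.3 then yields $\int_{|z|\ge1}H(z)\,\Pi(dz)<\infty$.
\item Rerunning the proof of Lemma \ref{lem4.3} with $H$ in place of $h$ gives $H(\cdot-y)\,|\widehat{\mathscr{L}}f|\in L^1(\R)$.
\end{enumerate}
Dominated convergence then completes your Step 2.
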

\begin{proof}
We denote by $(\widehat P_t)_{t\ge0}$ and $\widehat U^q$ the transition semigroup and the $q$-potential of the dual process $\widehat{X}$, respectively. Since Lebesgue measure $dx$ is invariant for the \Levy\ process (see, e.g., Theorem 24.24 of \cite{Sato}), by Lemma \ref{lem4.2}, we have
\begin{align}
\label{Lf-int0}
\int_\R \widehat{\mathscr{L}}f(x)\,dx&=\lim_{t\to 0+}\frac{1}{t}\int_\R \Big(\widehat{P}_t f(x)-f(x)\Big)\,dx=\lim_{t\to 0+}\frac{1}{t}\int_\R \Big(f(x)-f(x)\Big)\,dx=0.
\end{align}
By (\ref{qI-L}), we have
\begin{align*}
   \int_\R h_q(x-y)\widehat{\mathscr{L}} f(x)\,dx&=r_q(0)\int_\R \widehat{\mathscr{L}} f(x)\,dx-\int_{\R} r_q(y-x)\widehat{\mathscr{L}} f(x)\,dx\\
  &=-\widehat{U}^q\widehat{\mathscr{L}}f(y)\\
  &=-q\widehat{U}^qf(y)+f(y)\\
  &=-\int_\R qr_q(y-x)f(x)\,dx+f(y).
  \stepcounter{equation}\tag{\theequation}
\end{align*}
By (\ref{rq-lim}), we have
\begin{align*}
\left|\int_\R qr_q(y-x)f(x)\,dx\right|&\le \int_\R qr_q(y-x)|f(x)|\,dx\le  qr_q(0)\int_\R |f(x)|\,dx\to 0.
\stepcounter{equation}\tag{\theequation}
\end{align*}
By (\ref{hq-def}), we have
\begin{align*}
|h_q(x-y)\widehat{\mathscr{L}}f(x)|&\le \Big(h_q(x-y)+h_q(y-x)\Big)|\widehat{\mathscr{L}}f(x)|\\
&\le\frac{2}{\pi}\int_0^\infty \left|\Re \left(\frac{1-\cos ((x-y)\lambda)}{q+\Psi(\lambda)}\right)\right|\,d\lambda\cdot |\widehat{\mathscr{L}}f(x)|\\
&\le \frac{2}{\pi}\int_0^\infty  \left|\frac{1-\cos ((x-y)\lambda)}{q+\Psi(\lambda)}\right|\,d\lambda\cdot |\widehat{\mathscr{L}}f(x)|\\
&\le \frac{2}{\pi}\int_0^\infty \frac{1-\cos ((x-y)\lambda)}{|\Psi(\lambda)|}\,d\lambda\cdot |\widehat{\mathscr{L}}f(x)|\\
&=:H(x-y)|\widehat{\mathscr{L}}f(x)|.
\stepcounter{equation}\tag{\theequation}
\end{align*}
Here, by Lemma 15.5 of Tsukada \cite{Tsukada}, the function $H$ is finite and continuous. Since 
\begin{align*}
1-\cos (a+b)&=2\sin^2 \left(\frac{a+b}{2}\right)=2\left(\sin \frac{a}{2}\cos \frac{b}{2}+\cos \frac{a}{2}\sin \frac{b}{2}\right)^2\\
&\le 4\sin^2 \frac{a}{2}\cos^2 \frac{b}{2}+4\cos^2 \frac{a}{2}\sin^2 \frac{b}{2}\\
&\le 4\sin^2 \frac{a}{2}+4\sin^2 \frac{b}{2}\\
&\le 2(1-\cos a)+2(1-\cos b),
\stepcounter{equation}\tag{\theequation}
\end{align*}
we have
\begin{align}
H(a+b)\le 2H(a)+2H(b).
\end{align}
Let $G(z):=2+H(z)$. By the same argument as that used for (\ref{submultiplicative}), $G$ is sub-multiplicative. By the proof of Theorem 15.2 of \cite{Tsukada}, we have $\P_0[H(X_t)]<\infty$. By the same argument as in the proof of Lemma \ref{lem4.3}, we have $H(x-y)|\widehat{\mathscr{L}}f(x)|\in L^1(\R)$. Therefore, by the dominated convergence theorem, we obtain
\begin{align*}
  \int_\R h(x-y)\widehat{\mathscr{L}} f(x)\,dx&=\lim_{q\to 0+}\int_\R h_q(x-y)\widehat{\mathscr{L}} f(x)\,dx\\
  &=-\lim_{q\to 0+}\int_\R qr_q(y-x)f(x)\,dx+f(y)=f(y).
  \stepcounter{equation}\tag{\theequation}
\end{align*}
This completes the proof.
\end{proof}

We now prove that assertion 1 implies assertion 2 in Theorem \ref{mainthm2}.

\begin{prop}
Assume that the pair $(C,f)$ satisfies the Fredholm equation (\ref{main-Fredeq}) with the normalization condition (\ref{main-Frednorm}). Then the function $f$ satisfies the Sturm--Liouville equation (\ref{main-SLeq}).
\end{prop}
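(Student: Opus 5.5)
The plan is to pair the Fredholm identity (\ref{main-Fredeq}) against $\widehat{\mathscr{L}}g$ for a fixed $g\in C_c^\infty(\R)$ and then apply Proposition \ref{prop4.5} under the $\mu$-integral. The constant term should vanish, and the integral term should produce exactly $\int_K g(y)f(y)\,\mu(dy)$. The normalization (\ref{main-Frednorm}) will not be needed for the equation itself; it only pins down which solution we have.

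First, we check that $f\in\mathscr{A}$, so that $\langle\mathscr{L}f,g\rangle$ is defined. Since $f$ is continuous, it is bounded on the compact set $K$. By subadditivity, $h(x-y)\le h(x)+h(-y)$, so
\begin{align*}
|f(x)|\le |C|+\|f\|_{\max,K}\,\mu(K)\Big(h(x)+\max_{y\in K}h(-y)\Big).
\end{align*}
Hence $f\in\mathscr{A}_0$, and Lemma \ref{lem4.3} together with Lemma \ref{lem4.1} gives $f\cdot\widehat{\mathscr{L}}g\in L^1(\R)$. More precisely, what we need is the integrability of $|\widehat{\mathscr{L}}g(x)|$ against $1$ and against $h(x-y)$ uniformly for $y\in K$.

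Second, we compute
\begin{align*}
\langle\mathscr{L}f,g\rangle=C\int_\R\widehat{\mathscr{L}}g(x)\,dx+\int_\R\int_K f(y)h(x-y)\,\mu(dy)\,\widehat{\mathscr{L}}g(x)\,dx.
\end{align*}
The first integral is zero by (\ref{Lf-int0}). For the second, we use Fubini to swap the order of integration, obtaining
\begin{align*}
\int_K f(y)\left(\int_\R h(x-y)\widehat{\mathscr{L}}g(x)\,dx\right)\mu(dy)=\int_K f(y)g(y)\,\mu(dy)
\end{align*}
by Proposition \ref{prop4.5}. This is exactly (\ref{main-SLeq}).

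The main obstacle is justifying Fubini, which requires
\begin{align*}
\int_K\int_\R |f(y)|\,h(x-y)\,|\widehat{\mathscr{L}}g(x)|\,dx\,\mu(dy)<\infty.
\end{align*}
Lemma \ref{lem4.3} gives finiteness only for each fixed $y$, so we need a bound uniform in $y\in K$. Subadditivity gives $h(x-y)\le h(x)+\max_{K}h(-\cdot)$, which reduces the inner integral to $\int h(x)|\widehat{\mathscr{L}}g(x)|\,dx+M\int|\widehat{\mathscr{L}}g(x)|\,dx$. The first term is finite by Lemma \ref{lem4.3} with $y=0$, and the second by Lemma \ref{lem4.1}. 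Since $f$ is bounded on $K$ and $\mu$ is finite, this yields the required integrability and completes the argument.
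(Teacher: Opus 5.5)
Your proof is correct and takes the same route as the paper. You expand $\langle \mathscr{L}f,g\rangle$ via the Fredholm identity, remove the constant term with (\ref{Lf-int0}), and apply Proposition \ref{prop4.5} after exchanging the $dx$ and $\mu(dy)$ integrals. Your uniform bound $h(x-y)\le h(x)+\max_{y\in K}h(-y)$, combined with Lemma \ref{lem4.3} at $y=0$ and Lemma \ref{lem4.1} for the dual process, supplies the Fubini justification that the paper leaves implicit.
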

\begin{proof}
Let $g\in C_c^\infty(\R)$. By (\ref{main-Fredeq}), (\ref{Lf-int0}), and Proposition \ref{prop4.5}, we have
\begin{align*}
\langle \mathscr{L}f,g\rangle &=\int_\R\left(C +\int_K f(y)h(x-y)\,\mu(dy)\right)\widehat{\mathscr{L}}g(x)\,dx\\
&=C \int_\R \widehat{\mathscr{L}}g(x)\,dx+\int_K f (y)\,\mu(dy)\int_\R h(x-y)\widehat{\mathscr{L}}g(x)\,dx\\
&=\int_K f(y)g(y)\,\mu(dy).
\stepcounter{equation}\tag{\theequation}
\end{align*}
This is the desired equation (\ref{main-SLeq}).
\end{proof}

Next, we verify the growth condition (\ref{main-SLgrowth}).

\begin{prop}
\label{prop4.7}
Assume that the pair $(C,f)$ satisfies the Fredholm equation (\ref{main-Fredeq}) with the normalization condition (\ref{main-Frednorm}). Then the function $f$ satisfies the growth condition (\ref{main-SLgrowth}). More precisely, we have
\begin{align}
\label{prop4.6-eq1}
f(x)= h(x)+C\mp \frac{1}{\P_0[X_1^2]}\int_K yf (y)\,\mu(dy)+o(1)\qquad \text{as}\ x\to \pm \infty.
\end{align}
\end{prop}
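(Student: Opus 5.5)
The plan is to write $f(x)-h(x)$ directly from the Fredholm equation (\ref{main-Fredeq}), using the normalization (\ref{main-Frednorm}) to absorb the term $h(x)$:
\begin{align}
f(x)-h(x)=C+\int_K f(y)\Big(h(x-y)-h(x)\Big)\,\mu(dy),
\end{align}
since $\int_K f(y)\,\mu(dy)=1$. Continuity of $f-h$ is immediate because $h$ is continuous and $K$ is compact. Boundedness on compact sets is trivial, so the question reduces to the behaviour as $x\to\pm\infty$.

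For the asymptotics, I would apply (\ref{h-limit2}) with the shift $-y$:
\begin{align}
\lim_{x\to\pm\infty}\Big(h(x-y)-h(x)\Big)=\mp\frac{y}{\P_0[X_1^2]}\qquad\text{for each fixed } y\in K.
\end{align}
Then I would pass the limit under the integral with respect to $f(y)\,\mu(dy)$ by dominated convergence. The needed domination comes from subadditivity of $h$, which gives
\begin{align}
|h(x-y)-h(x)|\le \max\big(h(-y),h(y)\big)\le \max_{z\in K\cup(-K)}h(z)<\infty
\end{align}
uniformly in $x$. Here $h(x-y)\le h(x)+h(-y)$ and $h(x)\le h(x-y)+h(y)$. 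The integrand is therefore bounded by $\|f\|_{\max,K}\max_{K\cup(-K)}h$, which is $\mu$-integrable since $\mu$ is finite and $f$ is continuous on the compact set $K$. This yields (\ref{prop4.6-eq1}), including the convention $1/\infty=0$ when the second moment is infinite.

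The same uniform bound also shows that $f-h$ is bounded on all of $\R$, with
\begin{align}
\sup_x|f(x)-h(x)|\le |C|+\mu[|f|]\max_{K\cup(-K)}h,
\end{align}
so $f-h\in C_b(\R)$ does not even need the limit. I do not expect a real obstacle. The only point requiring care is the subadditivity domination, which is the step that justifies both boundedness and the interchange of limit and integral; the sign bookkeeping in (\ref{h-limit2}) with $y$ replaced by $-y$ should be checked carefully.
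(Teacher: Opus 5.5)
Your proposal is correct and follows the paper's proof. The paper also uses the normalization to write $f-h=C+\int_K f(y)\bigl(h(x-y)-h(x)\bigr)\,\mu(dy)$ and then passes to the limit using (\ref{h-limit2}). You also spell out two points the paper leaves implicit here: the dominated-convergence justification through the subadditivity bound $|h(x-y)-h(x)|\le\max\{h(y),h(-y)\}$ (the same bound the paper uses in Lemma \ref{lem4.8} and Proposition \ref{prop4.9}), and a direct global bound on $|f-h|$; both are sound.
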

\begin{proof}
By (\ref{h-limit2}), we have
  \begin{align*}
  \lim_{x\to \pm \infty}\Big(f(x)-h(x)\Big)&=C+ \lim_{x\to \pm \infty}\int_K \Big\{h(x-y)-h(x)\Big\}f (y)\,\mu(dy)\\
  &=C\mp \frac{1}{\P_0[X_1^2]}\int_K yf (y)\,\mu(dy)\in \R.
  \stepcounter{equation}\tag{\theequation}
  \end{align*}
This completes the proof.
\end{proof}

We have thus proved one direction of the equivalence asserted in Theorem \ref{mainthm2}.

\subsection{Sturm--Liouville Equation Implies Fredholm Equation}
\label{S4.2}

In this subsection, we prove the converse of the result established in the previous subsection. More precisely, we show that any function satisfying the Sturm--Liouville equation (\ref{main-SLeq}) and the growth condition (\ref{main-SLgrowth}) also satisfies the Fredholm equation (\ref{main-Fredeq}) and the normalization condition (\ref{main-Frednorm}).

We first state a basic lemma concerning the infinitesimal generator of a \Levy\ process. Its proof is deferred to Appendix \ref{Appendix}.

\begin{lem}
\label{lem4.8}
For $f\in C_b(\R)$, if $\mathscr{L}f=c\delta_0$, then $c=0.$
\end{lem}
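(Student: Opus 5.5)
The plan is to test the equation $\mathscr{L}f=c\delta_0$ against resolvents of test functions rather than against elements of $C_c^\infty(\R)$. This turns it into a pointwise identity that pits the bounded function $f$ against the resolvent density $r_q$, which blows up at $0$ by (\ref{rq-lim}).

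\textbf{Step 1 (the identity for $C_c^\infty$ test functions).} By hypothesis and (\ref{Def-exL}),
\begin{align}
\int_\R f(x)\widehat{\mathscr{L}}g(x)\,dx=c\,g(0)\qquad\text{for}\ g\in C_c^\infty(\R).
\end{align}

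\textbf{Step 2 (extending the identity).} Fix $\phi\in C_c^\infty(\R)$ and $q>0$, and put $u:=\widehat U^q\phi$. By (\ref{qI-L}) for the dual process, $\widehat{\mathscr{L}}u=qu-\phi$. Moreover $u\in L^1(\R)$, since $\int_\R r_q(x)\,dx=1/q$.

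I want Step 1 with $g=u$. To get it, I take cutoffs $\chi_R(x)=\chi(x/R)$ with $\chi\in C_c^\infty(\R)$, $\chi=1$ near $0$, and set $g_R:=\chi_R u$. I may first replace $\phi$ by a mollified version so that $u$ is smooth, or use that $\widehat U^q$ commutes with derivatives, giving $u'=\widehat U^q\phi'$ and $u''=\widehat U^q\phi''$ bounded and integrable. The goal is
\begin{align}
\|\widehat{\mathscr{L}}g_R-\widehat{\mathscr{L}}u\|_{L^1}\to 0 \qquad (R\to\infty),
\end{align}
which I check term by term from the explicit form of $\widehat{\mathscr{L}}$, as in the proof of Lemma \ref{lem4.1}:
\begin{itemize}
\item The drift and diffusion parts produce terms such as $\chi_R'u/R$ and $(1-\chi_R)u''$, which vanish in $L^1$.
\item The small-jump part is controlled by Taylor's theorem with $\int_{|z|<1}z^2\,\widehat\Pi(dz)$.
\item The large-jump part $\int_{|z|\ge1}|((1-\chi_R)u)(x+z)-((1-\chi_R)u)(x)|\,\widehat\Pi(dz)$ is bounded in $L^1(dx)$ by $2\widehat\Pi(\{|z|\ge1\})\|(1-\chi_R)u\|_{L^1}\to0$.
\end{itemize}
Since $g_R(0)=u(0)$ for large $R$ and $f$ is bounded, passing to the limit in Step 1 gives
\begin{align}
\int_\R f(x)\big(q\widehat U^q\phi(x)-\phi(x)\big)\,dx=c\,\widehat U^q\phi(0).
\end{align}

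\textbf{Step 3 (pointwise identity).} I rewrite both sides using Fubini and the resolvent density. The left side becomes $\int_\R (qU^qf-f)\phi\,dx$ and the right side becomes $c\int_\R \tilde r_q(y)\phi(y)\,dy$, where $\tilde r_q$ is $r_q$ up to a reflection of its argument, depending on the duality convention. Since $\phi$ is arbitrary and both sides are continuous ($f\in C_b(\R)$, and $r_q$ and $U^qf$ are continuous), this yields
\begin{align}
qU^qf-f=c\,\tilde r_q\qquad\text{everywhere}.
\end{align}

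\textbf{Step 4 (conclusion).} I evaluate at the point where $\tilde r_q$ equals $r_q(0)$. The left side is bounded in absolute value by $2\|f\|_{\max}$, uniformly in $q$. Hence $|c|\,r_q(0)\le 2\|f\|_{\max}$ for all $q>0$. Letting $q\to0+$ and using $r_q(0)\to\infty$ from (\ref{rq-lim}) forces $c=0$.

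The main obstacle is Step 2: justifying that the weak equation may be tested against the non-compactly supported function $\widehat U^q\phi$, which needs enough smoothness and integrability of $u$ and its derivatives for the cutoff error to vanish in $L^1$. If the cutoff estimates prove awkward, I would fall back on a Fourier argument. There, $f$ is a tempered distribution, the equation reads (up to sign and conjugation) $\Psi(\lambda)\widehat f(\lambda)=c$, and by (\ref{Psi-zero}) together with $1/\Psi\notin L^1$ near $0$ (recurrence), this is incompatible with boundedness of $f$ unless $c=0$.
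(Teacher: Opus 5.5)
Your proof is correct and follows a different route from the paper's. The paper works with harmonic functions. It sets $u:=ch-f$ and uses Proposition \ref{prop4.5} to get $\mathscr{L}u=0$. It bounds the increments $\Delta_zu$ by subadditivity of $h$, and applies the Berger--Schilling Liouville theorem (this is where (\ref{Psi-zero}) enters) to get $\Delta_zu\equiv A_z$. Solving Cauchy's functional equation gives $f=ch-ax-b$, and Pant\'{i}'s result $h(x)+h(-x)\to\infty$ then forces $c=0$.

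You instead pass to the resolvent form of the equation. Testing against $\widehat U^q\phi$ gives $qU^qf-f=c\,r_q(-\cdot)$ pointwise, hence $|c|\,r_q(0)\le 2\|f\|_{\max}$. Recurrence enters only through $r_q(0)\to\infty$ in (\ref{rq-lim}). What your route buys is economy: it needs no Liouville theorem, no (\ref{Psi-zero}), and no property of $h$, not even Proposition \ref{prop4.5}. The price is the cutoff approximation in Step 2. That step is a routine variant of the estimates in Lemma \ref{lem4.1}, and your term-by-term bounds are sound because $u$, $u'=\widehat U^q\phi'$ and $u''=\widehat U^q\phi''$ all lie in $L^1(\R)$. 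The paper's route, in exchange, reuses machinery it needs anyway in Subsection \ref{S4.2}, and it shows as a by-product that $f$ is constant.

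Three small points:
\begin{enumerate}
\item The identity $\widehat{\mathscr{L}}\widehat U^q\phi=q\widehat U^q\phi-\phi$ is $(qI-\widehat{\mathscr{L}})\widehat U^q=I$ on $C_0(\R)$, not (\ref{qI-L}) as you cite it.
\item Using the explicit integro-differential formula for $\widehat{\mathscr{L}}u$ is legitimate because $u,u',u''\in C_0(\R)$, a case covered by Theorem 31.5 of \cite{Sato}.
\item The Fourier fallback is unnecessary. As stated it would also need care, since $\Psi$ is not smooth, so $\Psi\widehat f$ is not a priori defined for a tempered distribution $\widehat f$.
\end{enumerate}
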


We now prove that assertion 2 implies assertion 1 in Theorem \ref{mainthm2}.

\begin{prop}
\label{prop4.9}
Assume that a function $f$ satisfies the Sturm--Liouville equation (\ref{main-SLeq}) with the growth condition (\ref{main-SLgrowth}). Then the function $f$ satisfies the normalization condition (\ref{main-Frednorm}).
\end{prop}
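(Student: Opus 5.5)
Write $f=h+b$ with $b:=f-h\in C_b(\R)$, which is possible by the growth condition (\ref{main-SLgrowth}). The plan is to split off from $f$ a piece that solves the Fredholm-type equation and reduce what is left to Lemma \ref{lem4.8}. Put $m:=\mu[f]=\int_K f(y)\,\mu(dy)$; this is finite because $f$ is continuous and $\mu$ is a finite measure with compact support. Define the candidate
\begin{align}
u(x):=\int_K f(y)h(x-y)\,\mu(dy).
\end{align}
By Proposition \ref{prop4.5}, Fubini (justified by Lemma \ref{lem4.3}, with integrability uniform in $y\in K$ since $h$ is continuous on compacts and subadditive), we get $\mathscr{L}u=f\cdot\mu$ in the sense of (\ref{Def-exL}). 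So $w:=f-u$ satisfies $\mathscr{L}w=0$.

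Next I identify the growth of $w$. By subadditivity, $|h(x-y)-h(x)|\le h(-y)\vee h(y)$, so $h(x-y)-h(x)$ is bounded uniformly for $y\in K$. Hence $u-mh$ is bounded and continuous, and therefore
\begin{align}
w=f-u=(1-m)h+\big(b-(u-mh)\big)=(1-m)h+w_0,\qquad w_0\in C_b(\R).
\end{align}
Using $\mathscr{L}h=\delta_0$ (Proposition \ref{prop4.5} with $y=0$) and $\mathscr{L}w=0$, we obtain
\begin{align}
\mathscr{L}w_0=-(1-m)\delta_0.
\end{align}
Lemma \ref{lem4.8} then gives $1-m=0$, which is exactly the normalization condition $\mu[f]=1$.

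The only step needing care is the interchange $\langle \mathscr{L}u,g\rangle=\int_K f(y)\langle \mathscr{L}h(\cdot-y),g\rangle\,\mu(dy)$. For this I need $\sup_{y\in K}\int_\R h(x-y)|\widehat{\mathscr{L}}g(x)|\,dx<\infty$. It follows from $h(x-y)\le h(x)+h(-y)$, Lemma \ref{lem4.3} with $y=0$, and Lemma \ref{lem4.1} applied to $\widehat{\mathscr{L}}g$ for the dual process. The same bound shows $u\in\mathscr{A}_0$, so every pairing above is well defined.
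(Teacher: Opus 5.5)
Your proposal is correct and follows essentially the same route as the paper's proof: the paper sets $F(x)=\int_K f(y)h(x-y)\,\mu(dy)$ (your $u$), derives $\mathscr{L}(f-F)=0$ from Proposition \ref{prop4.5}, and applies Lemma \ref{lem4.8} to $G=(f-F)-(1-\mu[f])h\in C_b(\R)$ (your $w_0$). Your explicit uniform bound $\int_\R h(x-y)|\widehat{\mathscr{L}}g(x)|\,dx\le \int_\R h(x)|\widehat{\mathscr{L}}g(x)|\,dx+h(-y)\|\widehat{\mathscr{L}}g\|_{L^1}$ for $y\in K$ is a welcome addition, since the paper interchanges the integrals without comment.
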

\begin{proof}
Let
\begin{align}
  F(x):=\int_K f(y)h(x-y)\,\mu(dy)\qquad \text{for}\ x\in \R.
\end{align}
For $g\in C_c^\infty(\R)$, by Proposition \ref{prop4.5}, we have
\begin{align*}
  \langle \mathscr{L}F,g\rangle&=\int_\R \widehat{\mathscr{L}}g(x)\,dx\int_K f(y)h(x-y)\,\mu(dy)\\
  &=\int_K f(y)\,\mu(dy)\int_\R h(x-y)\widehat{\mathscr{L}}g(x)\,dx\\
  &=\int_K f(y)g(y)\,\mu(dy)\\
  &=\langle f\cdot \mu,g\rangle.
  \stepcounter{equation}\tag{\theequation}
\end{align*}
Thus, by (\ref{main-SLeq}), we have
\begin{align}
\label{prop4.9-eq1}
\mathscr{L}(f-F)=0.
\end{align}
We have
\begin{align*}
  G(x)&:=(f(x)-F(x))-(1-\mu[f])h(x)\\
  &=(f(x)-h(x))-\Big(F(x)-\mu[f]h(x)\Big)\\
  &=(f(x)-h(x))-\int_K f (y)\Big(h(x-y)-h(x)\Big)\,\mu(dy).
  \stepcounter{equation}\tag{\theequation}
\end{align*}
Using the subadditivity of $h$ and (\ref{main-SLgrowth}), we have $G\in C_b(\R).$ Moreover, by Proposition \ref{prop4.5}, we have 
\begin{align}
  \mathscr{L}G=\mathscr{L}(f-F)-(1-\mu[f])\mathscr{L}h=-(1-\mu[f])\delta_0.
\end{align}
Therefore, by Lemma \ref{lem4.8}, we obtain $\mu[f]=1.$ This completes the proof.
\end{proof}

Next, we verify the Fredholm equation (\ref{main-Fredeq}).

\begin{prop}
Assume that a function $f$ satisfies the Sturm--Liouville equation (\ref{main-SLeq}) with the growth condition (\ref{main-SLgrowth}). Then the function $f$ satisfies the Fredholm equation (\ref{main-Fredeq}).
\end{prop}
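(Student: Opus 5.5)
We reuse the objects from the proof of Proposition \ref{prop4.9}. Put
\begin{align*}
F(x):=\int_K f(y)h(x-y)\,\mu(dy).
\end{align*}
That proof gives $\mathscr{L}(f-F)=0$ and $\mu[f]=1$. Since $\mu[f]=1$, the function $G$ defined there becomes
\begin{align*}
G=f-F=(f-h)-\int_K f(y)\bigl(h(\cdot-y)-h\bigr)\,\mu(dy).
\end{align*}
This is bounded and continuous: $f-h\in C_b(\R)$ by (\ref{main-SLgrowth}), and subadditivity gives $|h(x-y)-h(x)|\le h(-y)\vee h(y)$, which is bounded for $y\in K$. So $G\in C_b(\R)$ and $\mathscr{L}G=0$ in the weak sense (\ref{Def-exL}). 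The Fredholm equation (\ref{main-Fredeq}) then reduces to a Liouville-type statement: \emph{a bounded continuous function $G$ with $\mathscr{L}G=0$ in the distributional sense is constant}. Taking $C:=G$ gives $f=C+F$, which is (\ref{main-Fredeq}), and (\ref{main-Frednorm}) is already known.

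The main step is this Liouville statement. I would prove it by Fourier analysis, in the same spirit as Lemma \ref{lem4.8} (which presumably uses the same mechanism). Since $G\in L^\infty(\R)$, it is a tempered distribution. For $g\in C_c^\infty(\R)$, the Fourier transform of $\widehat{\mathscr{L}}g$ is $-\Psi(\pm\lambda)\widehat g(\lambda)$, with the sign fixed by the convention. Hence $\langle G,\widehat{\mathscr{L}}g\rangle=0$ says that $\Psi\cdot\widehat G=0$ as a distribution, after density and approximation arguments to pass from $C_c^\infty$ to Schwartz test functions multiplied by $\Psi$. By (\ref{Psi-zero}), $\Psi$ vanishes only at $0$, so $\widehat G$ is supported in $\{0\}$. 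Therefore $G$ is a polynomial, and a bounded polynomial is constant.

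The delicate point is the localisation away from $0$. Take a test function $\phi$ whose Fourier support avoids a neighbourhood of $0$. One wants to write $\phi=\widehat{\mathscr{L}}g$ with $\widehat g=\widehat\phi/\Psi$ (up to reflection). However, $\Psi$ is only continuous, and $g$ need not be compactly supported. So I would either

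\begin{itemize}
\item[(a)] extend the identity $\langle G,\widehat{\mathscr{L}}g\rangle=0$ from $C_c^\infty(\R)$ to Schwartz $g$ by truncation, using boundedness of $G$ and the $L^1$ control of Lemmas \ref{lem4.1} and \ref{lem4.2}, and then approximate $\widehat\phi/\Psi$; or

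\item[(b)] use a probabilistic route: mollify $G_\varepsilon:=G*\rho_\varepsilon$, so that $G_\varepsilon$ is smooth and bounded and still satisfies $\mathscr{L}G_\varepsilon=0$. Then $P_tG_\varepsilon=G_\varepsilon$, obtained by integrating $\frac{d}{ds}\langle P_sG_\varepsilon,g\rangle=\langle G_\varepsilon,\widehat{\mathscr{L}}\widehat P_sg\rangle=0$ using invariance of Lebesgue measure.
\end{itemize}

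In route (b), $G_\varepsilon$ is then a bounded harmonic function for a recurrent L\'evy process, hence constant; one can see this by $\widehat{G_\varepsilon}(1-e^{-t\Psi})=0$ together with (\ref{Psi-zero}), or by the Choquet--Deny theorem. Letting $\varepsilon\to0$ shows $G$ is constant.

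I would try route (b) first. The one technical point to check there is that $\widehat P_sg$ is an admissible test function, or can be approximated by admissible ones, for the weak identity. Boundedness of $G$ together with Lemma \ref{lem4.2} should make that approximation go through.
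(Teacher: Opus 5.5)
Your proposal is correct and is essentially the paper's proof. The paper likewise takes $\mu[f]=1$ and $\mathscr{L}(f-F)=0$ from Proposition~\ref{prop4.9} and its proof, observes $f-F\in C_b(\R)$, and concludes that $f-F$ is constant. The only difference is the Liouville step, which the paper does not prove but cites as Theorem 4.4 of Berger--Schilling together with (\ref{Psi-zero}), getting $f-F=a$ a.e. and hence everywhere by continuity. Your route (b) also works, and it needs no mollification, since $\frac{d}{ds}\langle G,\widehat P_s g\rangle=\langle G,\widehat P_s\widehat{\mathscr{L}}g\rangle=\int_\R\langle \mathscr{L}G,g(\cdot-y)\rangle\,\P_0(X_s\in dy)=0$ by Lemma~\ref{lem4.2} and translation invariance; thus $P_sG=G$, and Choquet--Deny combined with (\ref{Psi-zero}) gives constancy.
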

\begin{proof}
By Proposition \ref{prop4.9} and its proof, we have $f-F\in C_b(\R)$. By (\ref{Psi-zero}), (\ref{prop4.9-eq1}), and the Liouville theorem for \Levy\ processes (see Theorem 4.4 of Berger--Schilling \cite{Berger-Schilling}), there exists a constant $a\in \R$ such that
\begin{align}
f(x)-F(x)=a\qquad \text{for a.e.}\ x\in \R.
\end{align}
Since $f-F$ is continuous, this equation holds for all $x\in \R$. This is the desired equation.
\end{proof}

This completes the proof of Theorem \ref{mainthm2}.

\subsection{The Growth Condition in the Finite Second Moment Case}
\label{S4.3}

In this subsection, we will prove Theorem \ref{mainthm3}.

We first state a basic lemma concerning the infinitesimal generator of a \Levy\ process. Its proof is deferred to  Appendix \ref{Appendix}.

\begin{lem}
\label{lem4.12}
Assume that $\P_0[X_1^2]<\infty$. If a function $f\in L_{loc}^1(\R)$ satisfies
\begin{align}
   \mathscr{L}f=0,\qquad |f(x)|\le C(1+|x|),
 \end{align}
then there exist constants $a,b\in \R$ such that
 \begin{align}
   f(x)=ax+b\qquad \text{for a.e.}\ x\in \R.
 \end{align}
\end{lem}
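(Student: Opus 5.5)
The plan is Fourier-analytic: show that the tempered distribution $\widehat f$ is supported at the origin, and then use the growth bound to rule out derivatives of order two or more.

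First, $|f(x)|\le C(1+|x|)$ together with $f\in L^1_{loc}(\R)$ makes $f$ a tempered distribution, so $\widehat f\in\mathscr{S}'(\R)$. Since $\P_0[X_1^2]<\infty$, we have $\int_{|z|\ge1}z^2\,\Pi(dz)<\infty$. Hence $\widehat{\mathscr{L}}g$ is defined for every Schwartz function $g$, and it decays fast enough that $f\cdot\widehat{\mathscr{L}}g\in L^1(\R)$.

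The main obstacle is extending $\langle\mathscr{L}f,g\rangle=0$ from $g\in C_c^\infty(\R)$ to Schwartz $g$. I would approximate $g$ by $g\chi_n$, with $\chi_n$ a smooth cutoff equal to $1$ on $[-n,n]$, and prove $\int_\R|f|\,|\widehat{\mathscr{L}}(g\chi_n-g)|\,dx\to0$. For the local part of $\widehat{\mathscr{L}}$ this is immediate. For the jump part, I split into small and large jumps as in the proof of Lemma~\ref{lem4.3}. The large-jump term requires $\int_{|z|\ge1}(1+|z|)\,\Pi(dz)<\infty$ to control $\int|f(x)||g(x+z)|\,dx\lesssim 1+|z|$, and this holds by the second-moment assumption.

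With the extension in hand, I pass to Fourier transforms. For Schwartz $g$, $\widehat{\widehat{\mathscr{L}}g}$ equals $-\Psi$ or $-\overline{\Psi}$ (with sign conventions to be fixed) times $\widehat g$. The finite second moment gives $\Psi\in C^2(\R)$ with polynomially bounded derivatives, so multiplication by $\Psi$ preserves $\mathscr{S}$. Thus $\Psi\widehat f=0$ in $\mathscr{S}'$ (up to conjugation/reflection). By (\ref{Psi-zero}), $\Psi$ vanishes only at $0$. So on any test function $\phi$ supported away from $0$, we can write $\phi=\Psi\cdot(\phi/\Psi)$ with $\phi/\Psi$ smooth enough. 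Only $C^2$ regularity of $\Psi$ is available, but it suffices to first mollify, or to note that $\widehat f$ has finite order. This gives $\mathrm{supp}\,\widehat f\subset\{0\}$.

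Therefore $\widehat f=\sum_{k\le N}c_k\delta_0^{(k)}$, and $f$ is a polynomial almost everywhere. The bound $|f(x)|\le C(1+|x|)$ forces the degree to be at most one, so $f(x)=ax+b$ a.e. A cleaner route for the step $\Psi\widehat f=0\Rightarrow\mathrm{supp}\,\widehat f\subset\{0\}$ is to use finite order directly. If $\Psi$ is only $C^2$, I would instead convolve $f$ with a mollifier $\rho_\varepsilon$: the function $f*\rho_\varepsilon$ still satisfies $\mathscr{L}(f*\rho_\varepsilon)=0$ and the linear growth bound, it is smooth, and its Fourier transform is supported in $\{0\}$. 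Hence $f*\rho_\varepsilon$ is affine, and letting $\varepsilon\to0$ gives that $f$ is affine almost everywhere.
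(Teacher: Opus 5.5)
You take a different route from the paper. The paper does not prove the Liouville property itself: it notes that $\P_0[X_1^2]<\infty$ gives $\int_{|z|\ge1}|z|\,\Pi(dz)<\infty$ and invokes Corollary 1.8(c) of Grzywny--Kwa\'{s}nicki together with (\ref{Psi-zero}). You instead try to prove it directly by Fourier analysis, and your extension of $\int_\R f\,\widehat{\mathscr{L}}g\,dx=0$ from $C_c^\infty(\R)$ to Schwartz $g$ via cutoffs is correct.

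The gap is the central step ``$\Psi\cdot\mathcal{F}f=0\Rightarrow\mathrm{supp}\,\mathcal{F}f\subset\{0\}$''. (I write $\mathcal{F}$ for the Fourier transform to avoid a clash with $\widehat{\mathscr{L}}$.) Multiplication by $\Psi$ does \emph{not} preserve $\mathscr{S}(\R)$. Under a second moment, $\Psi$ is in general only $C^2$, and not merely at the origin: for $a=0$, $\sigma>0$, $\Pi=\sum_{n\ge1}n^{-4}(\delta_n+\delta_{-n})$, which satisfies all the assumptions, one gets $\Psi''(\lambda)=\sigma^2+2\sum_{n\ge1}n^{-2}\cos(n\lambda)$, which has a kink at every $2\pi k$. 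Correspondingly, $\widehat{\mathscr{L}}g$ is not Schwartz, and $\Psi\cdot\mathcal{F}f$ is not defined as written. For $\phi\in C_c^\infty(\R\setminus\{0\})$ the quotient $\phi/\Psi$ is only $C^2$, so it is not the transform of any test function you have shown to be admissible.

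Your patches do not close this. Mollifying $f$ is beside the point: $\mathcal{F}(f*\rho_\varepsilon)=\mathcal{F}\rho_\varepsilon\cdot\mathcal{F}f$ has the same local order as $\mathcal{F}f$ near each $\xi_0\neq0$. Smoothing only improves decay as $|\xi|\to\infty$, whereas the local order is governed by the growth of $f$ and can equal $2$ for linearly growing $f$, e.g.\ $|x|\sin x$. So asserting that $\mathcal{F}(f*\rho_\varepsilon)$ is supported in $\{0\}$ merely restates the claim. Likewise, ``finite order'' helps only if that order does not exceed the regularity of $\Psi$.

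That matching is the missing idea, and it is available here.
\begin{enumerate}
\item Since $|f(x)|\le C(1+|x|)$, $u:=f/(1+x^2)\in L^2(\R)$ and $\mathcal{F}f=(1-\partial_\xi^2)\mathcal{F}u$. Hence $\mathcal{F}f$ has order at most $2$, and $\langle\mathcal{F}f,\theta\rangle=\int\mathcal{F}u\,(1-\partial_\xi^2)\theta\,d\xi$ makes sense for $C^2$ functions $\theta$ with $(1-\partial_\xi^2)\theta\in L^2$.
\item Using $\int_{|z|\ge1}z^2\,\Pi(dz)<\infty$, one checks that $(1+x^2)\widehat{\mathscr{L}}g\in L^1$ for $g\in\mathscr{S}(\R)$. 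Its Fourier transform is $-(1-\partial_\xi^2)(\Psi\cdot\mathcal{F}g)$ up to reflection, which lies in $L^1\cap L^2$ because $\Psi,\Psi',\Psi''$ grow at most polynomially.
\item Plancherel then turns $\int f\,\widehat{\mathscr{L}}g\,dx=0$ into $\langle\mathcal{F}f,\Psi^\sharp\psi\rangle=0$ for all $\psi\in\mathscr{S}(\R)$, where $\Psi^\sharp$ is $\Psi$ or its reflection.
\item For $\phi\in C_c^\infty(\R\setminus\{0\})$, approximate $\phi/\Psi^\sharp\in C_c^2(\R\setminus\{0\})$ in $C^2$ by $\psi_k\in C_c^\infty(\R)$ supported in a fixed compact set avoiding $0$. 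Then $\Psi^\sharp\psi_k\to\phi$ in $C^2$, so $\langle\mathcal{F}f,\phi\rangle=0$.
\end{enumerate}
After this, the rest of your argument (point support, polynomial, degree at most one) goes through. This route uses the second moment essentially, to get $\Psi\in C^2$ matching order $2$. The paper's citation, by contrast, only needs $\int_{|z|\ge1}|z|\,\Pi(dz)<\infty$.
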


We now prove Theorem \ref{mainthm3}.

\begin{proof}[Proof of Theorem \ref{mainthm3}]
Assume that assertion i holds. Then, by Theorem \ref{mainthm2}, the Fredholm equation (\ref{main-Fredeq}) holds, and hence assertion ii follows from Proposition \ref{prop4.7} and (\ref{h-limit2}).

Conversely, we assume that assertion ii holds. Let $G(x):=f(x)-F(x)$. By (\ref{h-limit2}), we have
\begin{align*}
\label{lem4.12-eq3}
  G(x+z)-G(x)&=\Big(f(x+z)-f(x)\Big)-\int_Kf(y)\Big(h(x+z-y)-h(x-y)\Big)\,\mu(dy)\\
  &\to \pm \frac{(1-\mu[f])z}{\P_0[X_1^2]}\qquad \text{as}\ x\to \pm \infty.
  \stepcounter{equation}\tag{\theequation}
\end{align*}
Thus, there exists a constant $M>1$ such that
\begin{align}
  |G(x+1)-G(x)|\le M\qquad \text{for}\ x\in \R.
\end{align}
For $x\ge 0$, choose $n_x\in \N\cup\{0\}$ such that $n_x\le x<n_x+1$. Let $r:=x-n_x$. Thus, we have
\begin{align*}
  |G(x)|&\le |G(r)|+\sum_{k=1}^{n_x} \Big|G(r+k)-G(r+k-1)\Big|\\
  &\le \max_{r\in [0,1]}|G(r)|+Mn_x\\
  &\le \max_{r\in [0,1]}|G(r)|+Mx\qquad \text{for}\ x\ge 0.
    \stepcounter{equation}\tag{\theequation}
\end{align*}
Since the same conclusion holds for $x<0$, there exists a constant $C>0$ such that
\begin{align}
|G(x)|\le C(1+|x|)\qquad \text{for}\ x\in \R.
\end{align}
Thus, by (\ref{prop4.9-eq1}) and Lemma \ref{lem4.12}, there exist constants $a,b\in \R$ such that
\begin{align}
G(x)=ax+b\qquad \text{for a.e.}\ x\in \R.
\end{align}
Since $G$ is continuous, this equality holds for all $x\in \R$. We have
\begin{align}
\label{lem4.12-eq4}
 G(x+z)-G(x)=az\qquad \text{for}\ x\in \R.
\end{align}
Comparing (\ref{lem4.12-eq3}) and (\ref{lem4.12-eq4}) as $x\to \pm \infty$, we obtain $a=0$ and $\mu[f]=1$. Thus, we have
\begin{align*}
 f(x)-h(x)&=\Big(f(x)-F(x)\Big)+\Big(F(x)-h(x)\Big)\\
  &=b+\int_K f(y)\Big(h(x-y)-h(x)\Big)\,\mu(dy)\\
  &\to b\mp\frac{1}{\P_0[X_1^2]}\int_Kyf(y)\,\mu(dy)\in \R. 
  \stepcounter{equation}\tag{\theequation}
\end{align*}
This implies $f-h\in C_b(\R).$ This completes the proof.
\end{proof}


\section{Penalized Process}
\label{S5}
In this section, we investigate properties of the penalized process obtained from the penalization limit (\ref{main-penallim}).

For $x\in \R$, recall that $\Q_x^\mu$ denotes the penalized law, that is, the measure $\Q_x^\mu$ on $\F_\infty$ satisfies
\begin{align}
\frac{d\Q_x^\mu}{d\P_x}\Big|_{\F_t}=\frac{\varphi^\mu(X_t)\cdot \Gamma_t^\mu}{\varphi^\mu(x)}\qquad \text{for}\ t\ge 0.
\end{align}
By an argument analogous to the proof of Theorem 1.4 of Takeda \cite{T}, we obtain
\begin{align}
|X_t|\to\infty,
\qquad
\mathbb{Q}_x^\mu\text{-a.s.}
\end{align}
Thus, the process is transient under $\mathbb{Q}_x^\mu$. Since the original process is recurrent, it follows that
\begin{align}
\mathbb{P}_x\sim\mathbb{Q}_x^\mu
\quad\text{on }\F_t,
\qquad
\mathbb{P}_x\perp\mathbb{Q}_x^\mu
\quad\text{on }\F_\infty.
\end{align}

Furthermore, the $\mu$-weighted total local time accumulated on $K$ is finite.

\begin{prop}
For every $x\in \R$, we have
  \begin{align}
    \int_K L_\infty^y\,\mu(dy)<\infty,\qquad \Q_x^\mu\text{-a.s.}
  \end{align}
\end{prop}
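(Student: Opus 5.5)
Write $A_t:=\int_K L_t^y\,\mu(dy)$, so that $\Gamma_t^\mu=e^{-A_t}$ and $A_t\uparrow A_\infty:=\int_K L_\infty^y\,\mu(dy)$. The event $\{A_\infty=\infty\}$ coincides with $\{\Gamma_t^\mu\to0\}$. The plan is to show $\Q_x^\mu(\Gamma_t^\mu\to 0)=0$ by exploiting the density $\varphi^\mu(X_t)\Gamma_t^\mu/\varphi^\mu(x)$ on $\F_t$.

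The key quantity is $\Q_x^\mu$ of the event $\{\Gamma_t^\mu\le\varepsilon\}$, or more conveniently the expectation
\begin{align}
\Q_x^\mu\Big[\frac{1}{\varphi^\mu(X_t)\Gamma_t^\mu}\Big]=\frac{1}{\varphi^\mu(x)}\P_x\big(\varphi^\mu(X_t)\Gamma_t^\mu>0\big)\le\frac{1}{\varphi^\mu(x)}.
\end{align}
This uses that $\varphi^\mu>0$ (proved above) and that $\Gamma_t^\mu>0$ a.s., since $A_t<\infty$. Thus $M_t:=1/(\varphi^\mu(X_t)\Gamma_t^\mu)$ is a nonnegative $\Q_x^\mu$-supermartingale. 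It is the reciprocal of the density process, so it is in fact a $\Q_x^\mu$-martingale, because $\P_x\sim\Q_x^\mu$ on $\F_t$. By the martingale convergence theorem, $M_t$ converges $\Q_x^\mu$-a.s. to a finite limit. Hence
\begin{align}
\liminf_{t\to\infty}\varphi^\mu(X_t)\Gamma_t^\mu>0,\qquad \Q_x^\mu\text{-a.s.}
\end{align}

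To conclude, I need to rule out that $\varphi^\mu(X_t)$ grows while $\Gamma_t^\mu\to0$. Under $\Q_x^\mu$ we have $|X_t|\to\infty$ a.s., and $\varphi^\mu$ grows like $h$ by Proposition \ref{prop4.7}, so the bound above does not immediately give $\Gamma_\infty^\mu>0$. This is the main obstacle. I will handle it with the last-exit structure: since $|X_t|\to\infty$ and $K$ is compact, there is a $\Q_x^\mu$-a.s. finite random time $\tau$ after which $X$ never returns to a neighbourhood of $K$. Local times $L^y$ for $y\in K$ increase only when $X$ is at $y$, so $A_t=A_\tau$ for $t\ge\tau$. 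It therefore suffices to show $A_\tau<\infty$.

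For this I would localize. Let $\sigma_n$ be the first exit time from $[-n,n]$, which is finite $\Q_x^\mu$-a.s. for every $n$. On $\F_{\sigma_n}$ (by optional stopping for bounded stopping times and then a limit), $\Q_x^\mu\ll\P_x$ on $\{\sigma_n<\infty\}$, and $A_{\sigma_n}<\infty$ $\P_x$-a.s. because $A$ is a continuous additive functional. Hence $A_{\sigma_n}<\infty$ $\Q_x^\mu$-a.s. for each $n$. On the event $\{\tau<\infty\}$, the path after time $\tau$ avoids a neighbourhood of $K$, so beyond time $\tau$ the functional $A$ is constant. Since $A_t<\infty$ for every finite $t$ under $\Q_x^\mu$ (by $\Q_x^\mu\ll\P_x$ on each $\F_t$), we get $A_\infty=A_\tau<\infty$.

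So the cleanest route is: first, $A_t<\infty$ for each $t$, $\Q_x^\mu$-a.s., by absolute continuity on $\F_t$; second, $A$ is eventually constant, because of transience $|X_t|\to\infty$ and the compactness of $K$. The martingale argument above is a backup if one wants to avoid invoking transience.
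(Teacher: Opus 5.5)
Your main route is correct, but it differs from the paper's. You argue qualitatively. First, $A_t<\infty$ for each finite $t$ under $\Q_x^\mu$, by absolute continuity on $\F_t$. Second, $A$ is eventually constant, because $|X_t|\to\infty$ $\Q_x^\mu$-a.s. and $A$ increases only while $X$ is in $K$.

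The paper instead uses the right-continuous inverse $\tau_u$ of $A_t=\int_K L_t^y\,\mu(dy)$. It applies optional sampling to the martingale $\varphi^\mu(X_t)\Gamma_t^\mu$ at $\tau_u$, using $\Gamma_{\tau_u}^\mu=e^{-u}$ and $X_{\tau_u}\in K$. This gives the exact identity $\Q_x^\mu(A_\infty>u)=e^{-u}\P_x[\varphi^\mu(X_{\tau_u})]/\varphi^\mu(x)$, and hence two-sided bounds of order $e^{-u}$.

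The paper's proof is therefore self-contained and quantitative: it shows $A_\infty$ has exponential tails under $\Q_x^\mu$. Yours rests on the transience statement $|X_t|\to\infty$, which the paper only asserts by analogy with Takeda's argument and does not prove.

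Two further points.

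First, $\P_x\perp\Q_x^\mu$ on $\F_\infty$, so you cannot transfer the path property ``$A$ is constant on time intervals during which $X$ avoids $K$'' from $\P_x$ to $\Q_x^\mu$ in one step. Do it on each finite horizon $[0,n]$, where it is an $\F_n$-event of full $\P_x$-measure, and intersect over $n$. The property should also be formulated for the additive functional $A$ itself, not separately for each of the uncountably many $L^y$, $y\in K$. One way is to use $A_{T_K}=0$ together with the Markov property at rational times.

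Second, your claim that the martingale argument is a transience-free backup is not justified as written. As you note yourself, $\liminf_{t}\varphi^\mu(X_t)\Gamma_t^\mu>0$ alone does not exclude $\Gamma_t^\mu\to 0$ while $\varphi^\mu(X_t)\to\infty$. It becomes a complete proof once you add the paper's key observation. On $\{A_\infty=\infty\}$ one has $\tau_u<\infty$, $\tau_u\to\infty$, and $X_{\tau_u}\in K$. Hence $\varphi^\mu(X_{\tau_u})\Gamma_{\tau_u}^\mu\le e^{-u}\max_K\varphi^\mu\to 0$. This contradicts the almost sure convergence of $1/(\varphi^\mu(X_t)\Gamma_t^\mu)$ to a finite limit.
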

\begin{proof}
Let $(\tau_u)_{u\ge 0}$ denote the right-continuous inverse of the additive functional $(\int_{K} L_t^y\,\mu(dy))_{t\geq 0}$, that is,
\begin{align}
\tau_u:=\inf \left\{t\ge 0:\ \int_K L_t^y\, \mu(dy)>u\right\}\qquad \text{for}\ u\ge 0.
\end{align}
By recurrence, we have $\int_K L_\infty^y\,\mu(dy)=\infty,\ \mathbb{P}_x\text{-a.s.}$, and hence, for every $u\geq 0$, $\tau_u<\infty,\ \mathbb{P}_x\text{-a.s.}$ Since $\tau_u$ is a stopping time, the optional sampling theorem yields
  \begin{align*}
    \Q_x^\mu\left(\int_K L_t^y\,\mu(dy)>u\right)&=\Q_x^\mu (\tau_u< t)\\
    &=\frac{1}{\varphi^\mu(x)}\P_x\left[1_{\{\tau_u< t\}}\varphi^\mu(X_t)\cdot \Gamma_t^\mu\right]\\
    &=\frac{1}{\varphi^\mu(x)}\P_x\left[1_{\{\tau_u<t\}}\varphi^\mu(X_{\tau_u})\cdot \Gamma_{\tau_u}^\mu\right]\\
    &=\frac{e^{-u}}{\varphi^\mu(x)}\P_x\left[1_{\{\tau_u< t\}}\varphi^\mu(X_{\tau_u})\right].
    \stepcounter{equation}\tag{\theequation}
  \end{align*}
Thus, letting $t\to \infty$, we have
  \begin{align}
    \Q_x^\mu\left(\int_K L_\infty^y\,\mu(dy)>u\right)=\frac{e^{-u}}{\varphi^\mu(x)}\P_x\left[\varphi^\mu(X_{\tau_u})\right].
  \end{align}
Since $X_{\tau_u}\in K$ and $\varphi^\mu$ is continuous, we have
  \begin{align}
    \frac{\min_{z\in K}\varphi^\mu(z)}{\varphi^\mu(x)}\cdot e^{-u}\le \Q_x^\mu\left(\int_K L_\infty^y \,\mu(dy)>u\right)\le \frac{\max_{z\in K}\varphi^\mu(z)}{\varphi^\mu(x)}\cdot e^{-u}.
  \end{align}
Letting $u\to\infty$, we obtain the desired conclusion.
\end{proof}

We define the measure obtained by removing the weight as follows:
\begin{align}
  \mathscr{P}_x^\mu:=\frac{\varphi^\mu(x)}{\Gamma_\infty^\mu}\cdot \Q_x^\mu\qquad \text{on}\ \F_\infty.
\end{align}
Since
\begin{align}
\mathscr{P}_x^\mu \left(\Gamma_\infty^\mu>\frac{1}{n}\right)\le n\varphi^\mu(x)<\infty\qquad \text{for}\ n\ge 1,
\end{align}
the measure $\mathscr{P}_x^\mu$ is a $\sigma$-finite measure. It is clear that $\mathscr{P}_x^\mu[\Gamma_\infty^\mu]=\varphi^\mu(x)$, and hence, we have
\begin{align}
\Q_x^\mu=\frac{\Gamma_\infty^\mu}{\mathscr{P}_x^\mu[\Gamma_\infty^\mu]}\cdot \mathscr{P}_x^\mu\qquad \text{on}\ \F_\infty.
\end{align}

Assume that $X$ admits transition densities $p_u(\cdot)$ for $u>0$. It was shown in Subsection 8.2 of Takeda--Yano \cite{Takeda-Yano} that, for the penalization problem with $\mu=\delta_0$, the measure obtained by removing the weight $\mathscr{P}_x$ is given as follows:
\begin{align}
\mathscr{P}_x:=\int_0^\infty (\P_{x,0}^u\bullet \P_0^{(0)})p_u(-x)\,du+h(x)\cdot\P_x^{(0)},
\end{align}
where $\mathbb{P}_{x,0}^u$ denotes the \Levy\ bridge from $X_0=x$ to $X_u=0$, $\mathbb{P}_x^{(0)}$ denotes the law of the \Levy\ process conditioned to avoid zero, and the symbol $\bullet$ denotes concatenation.

In this case, $\mathscr{P}_x^\mu$ coincides with $\mathscr{P}_x$. Therefore, $\mathscr{P}_x^\mu$ is independent of the weight $\Gamma^\mu$. In this sense, the measure $\mathscr{P}_x$ is universal.

\begin{prop}
For $x\in \R$, we have
\begin{align}
  \mathscr{P}_x^\mu=\mathscr{P}_x.
\end{align}
\end{prop}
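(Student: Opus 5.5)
To prove $\mathscr{P}_x^\mu=\mathscr{P}_x$, I will show that both $\sigma$-finite measures agree after multiplication by $\Gamma_\infty^\mu$. That is, I will establish
\begin{align}
\Gamma_\infty^\mu\cdot\mathscr{P}_x=\varphi^\mu(x)\,\Q_x^\mu\qquad\text{on }\F_\infty .
\end{align}
Once this identity holds, dividing by $\Gamma_\infty^\mu$ on the event $\{\Gamma_\infty^\mu>0\}$ gives the claim, provided both measures charge only that event. For $\mathscr{P}_x^\mu$ this is built into its definition, since $\Q_x^\mu(\Gamma_\infty^\mu>0)=1$ by the preceding proposition. For $\mathscr{P}_x$, I will check that under the bridge-concatenation and under $\P^{(0)}$ the paths are transient. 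Hence the local times on the compact set $K$ are finite, and $\Gamma_\infty^\mu>0$ $\mathscr{P}_x$-a.e.

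Since $\varphi^\mu(x)\Q_x^\mu$ is determined by its restrictions to $\F_t$, it suffices to prove, for every $t\ge0$ and $F_t\in b\F_t$,
\begin{align}
\mathscr{P}_x\big[F_t\,\Gamma_\infty^\mu\big]=\P_x\big[F_t\,\varphi^\mu(X_t)\Gamma_t^\mu\big].
\end{align}
To get this, I will use the Markov property of $\mathscr{P}_x$ established by Takeda--Yano in their Subsection 8.2 (their Theorem 8.x): on $\F_t$, $\mathscr{P}_x$ decomposes as $\mathscr{P}_x[F_t\cdot G\circ\theta_t]=\P_x[F_t\,\mathscr{P}_{X_t}[G]]$ for non-negative $\F_\infty$-measurable $G$. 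Using $\Gamma_\infty^\mu=\Gamma_t^\mu\cdot(\Gamma_\infty^\mu\circ\theta_t)$, the left side becomes $\P_x[F_t\Gamma_t^\mu\,\mathscr{P}_{X_t}[\Gamma_\infty^\mu]]$. The problem therefore reduces to the identity $\mathscr{P}_y[\Gamma_\infty^\mu]=\varphi^\mu(y)$ for all $y$.

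To prove $\mathscr{P}_y[\Gamma_\infty^\mu]=\varphi^\mu(y)$, I will use the universality result of Takeda--Yano: for the weight $\mu=\delta_0$ one has $\lim_{q\to0+}r_q(0)\P_y[F\circ\text{(killed at }\bm e_q)]=\mathscr{P}_y[F]$ for suitable functionals. Alternatively, and more concretely, I will compute $\psi(y):=\mathscr{P}_y[\Gamma_\infty^\mu]$ and show that it solves the Fredholm equation (\ref{main-Fredeq}); Theorem \ref{mainthm2} then forces $\psi=\varphi^\mu$. For the computation, I will expand $\Gamma_\infty^\mu=1-\int_0^\infty\Gamma_s^\mu\,dA_s$ with $A_s=\int_K L^z_s\,\mu(dz)$. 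The $\mathscr{P}_y$-measure of the constant $1$ is infinite, so I will instead work at level $q$ and pass to the limit. Concretely, $r_q(0)\P_y[\Gamma^\mu_{\bm e_q}]=\varphi^\mu_q(y)$ converges to $\varphi^\mu(y)$ by Proposition \ref{prop3.2}. Meanwhile $r_q(0)\P_y[\Gamma_{\bm e_q}^\mu]=r_q(0)\int_0^\infty qe^{-qs}\P_y[\Gamma^\mu_s]\,ds$, and the Takeda--Yano construction identifies the limit of $r_q(0)\,q e^{-qs}\P_y[\,\cdot\,;\F_s]$ with $\mathscr{P}_y$ on functionals depending on the whole path, via the last-exit decomposition at $0$. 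That decomposition reproduces exactly the bridge term $\int_0^\infty p_u(-y)\,du$ together with the term $h(y)\P^{(0)}_y$.

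The main obstacle is this last step: justifying the interchange of limit and expectation for the non-local functional $\Gamma_\infty^\mu$. To handle it, I will split at the last zero $g$ before $\bm e_q$. On $\{g\text{ exists}\}$, the weight factorizes as the bridge weight times the weight of the excursion straddling $\bm e_q$. That excursion, under the normalization $r_q(0)$, converges to $\P_0^{(0)}$, and its weight $\Gamma$ converges monotonically, so monotone convergence applies since all quantities lie in $[0,1]$. On the no-zero event, $r_q(0)\P_y(\bm e_q<T_0,\cdot)$ converges to $h(y)\P_y^{(0)}$ by the conditioning-to-avoid-zero result. Combining the two pieces gives $\psi=\varphi^\mu$ and completes the argument.
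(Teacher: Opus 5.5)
\textbf{There is a genuine gap, at the step you yourself flag as the main obstacle.}

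Your reduction is sound. You use the Markov property of $\mathscr{P}_x$, which can be checked from its bridge/$h$-transform decomposition, together with $\Gamma^\mu_\infty=\Gamma^\mu_t\cdot(\Gamma^\mu_\infty\circ\theta_t)$. With these, everything comes down to the identity $\psi(y):=\mathscr{P}_y[\Gamma^\mu_\infty]=\varphi^\mu(y)$.

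The gap is in how you prove this identity. The convergences you invoke are the excursion straddling $\bm{e}_q$ towards $\P_0^{(0)}$, and $r_q(0)\P_y(\,\cdot\,;\bm{e}_q<T_0)$ towards $h(y)\P_y^{(0)}$. Both hold only on $\F_t$ for each fixed $t$. The functional you need, however, is $\Gamma^\mu$ evaluated at the $q$-dependent time $\bm{e}_q-g$ (resp.\ $\bm{e}_q$), which tends to infinity. Since the laws themselves change with $q$, there is no fixed path along which $\Gamma^\mu$ decreases, so neither monotone nor bounded convergence applies.

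Monotonicity does give one inequality. Since $\Gamma^\mu_s\le\Gamma^\mu_t$ for $s\ge t$, the straddling factor is at most $e^{-qt}\,n\big[\Gamma^\mu_t\,h_q(X_t);\,t<\zeta\big]$ plus an error of order $q\,r_q(0)$. This tends to $\P_0^{(0)}[\Gamma^\mu_t]$, and the no-zero term is handled the same way. Letting $t\to\infty$ yields $\varphi^\mu\le\psi$.

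The reverse inequality is the problem. It would require showing that the $\mu$-weighted local time on $K$ accrued during the window $[t,\bm{e}_q]$ is negligible, uniformly in small $q$. Nothing in your proposal provides such an estimate. Your Fredholm alternative does not close it either: as you note, expanding $\Gamma^\mu_\infty$ produces $\infty-\infty$ under $\mathscr{P}_y$.

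The missing ingredient is the comparison with $\mu=\delta_0$ that the paper uses. Three facts combine:
\begin{enumerate}
\item $\varphi^{\delta_0}=1+h$;
\item $\varphi^\mu-h$ is bounded;
\item $h(X_s)\to\infty$ $\Q_y^{\delta_0}$-a.s.
\end{enumerate}
Together they give $\varphi^\mu(X_s)/(1+h(X_s))\to1$ $\Q_y^{\delta_0}$-a.s. Now use $\mathscr{P}_y=\frac{1+h(y)}{\Gamma^{\delta_0}_\infty}\cdot\Q_y^{\delta_0}$, $\Gamma^{\delta_0}_\infty>0$, and the density $\frac{(1+h(X_s))\Gamma^{\delta_0}_s}{1+h(y)}$ of $\Q_y^{\delta_0}$ on $\F_s$. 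Fatou's lemma then gives
\[
\psi(y)=(1+h(y))\,\Q_y^{\delta_0}\left[\lim_{s\to\infty}\frac{\varphi^\mu(X_s)\Gamma^\mu_s}{(1+h(X_s))\Gamma^{\delta_0}_s}\right]\le\liminf_{s\to\infty}\P_y\Big[\varphi^\mu(X_s)\Gamma^\mu_s\Big]=\varphi^\mu(y).
\]
Combined with your upper bound, this would complete your route.

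The paper runs this Fatou argument in both directions at once through Theorem 4.1 of Yano. Its hypotheses are transience, $\Gamma^\mu_\infty>0$ and $\Gamma^{\delta_0}_\infty>0$ under the opposite penalized laws, and $\varphi^\mu(X_t)/\varphi^{\delta_0}(X_t)\to1$. This way the paper never needs the $q\to0+$ interchange for a non-local functional.
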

\begin{proof}
Note that $\varphi^{\delta_0}(x)=1+h(x)$. By transience, we have
\begin{align}
  \Gamma_t^\mu\to \Gamma_\infty^\mu>0,\ \Q_x^{\delta_0}\text{-a.s.},\qquad \Gamma_t^{\delta_0}\to \Gamma_\infty^{\delta_0}>0,\ \Q_x^{\mu}\text{-a.s.}
\end{align}
By Proposition 2.1 of Yano \cite{Yano-penal}, we have
\begin{align}
\varphi^{\mu}(X_t)\to \infty,\ \Q_x^\mu\text{-a.s.},\qquad h(X_t)\to \infty,\ \Q_x^{\delta_0}\text{-a.s.}
\end{align}
Since $\varphi^\mu-h$ is bounded, we also have
\begin{align}
h(X_t)\to \infty,\qquad \Q_x^{\mu}\text{-a.s.}
\end{align}
Thus, we have
\begin{align}
  \lim_{t\to \infty}\frac{\varphi^{\mu}(X_t)}{\varphi^{\delta_0}(X_t)}=\lim_{t\to \infty}\frac{\varphi^{\mu}(X_t)}{1+h(X_t)}=1,\qquad \Q_x^{\delta_0}\text{-a.s.}\ \text{and}\ \Q_x^\mu\text{-a.s.}
\end{align}
Therefore, Theorem 4.1 of Yano \cite{Yano-penal} applies, and the assertion follows.
\end{proof}

Using the fact that $(\Gamma_t^\mu)_{t\ge 0}$ is a multiplicative functional, we obtain Theorem \ref{main-thm4} and the following corollary:

\begin{cor}
Let $g:=\sup\{t\ge 0:\ X_t=0\}$ denote the last exit time from zero, where we use the convention that $\sup\emptyset=0$. Its law is given by
\begin{align}
  \Q_x^\mu(g\in du)=\frac{\P_{x,0}^u[\Gamma_u^\mu]\P_0^{(0)}[\Gamma_\infty^\mu]p_u(-x)}{\varphi^\mu(x)}1_{\{u>0\}}du+\frac{h(x)\P_x^{(0)}[\Gamma_\infty^\mu]}{\varphi^\mu(x)}\delta_0(du).
\end{align}
\end{cor}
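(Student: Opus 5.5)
The strategy is to read off the law of $g$ from the decomposition of Theorem \ref{main-thm4}, namely
\begin{align*}
\Q_x^\mu=\frac{1}{\varphi^\mu(x)}\int_0^\infty \Big((\Gamma_u^\mu\cdot\P_{x,0}^u)\bullet (\Gamma_\infty^\mu\cdot\P_0^{(0)})\Big)p_u(-x)\,du+\frac{h(x)\cdot \Gamma_\infty^\mu}{\varphi^\mu(x)}\cdot\P_x^{(0)}.
\end{align*}
For each component we identify the value of $g$. We then evaluate $\Q_x^\mu(g\in A)$ for Borel $A\subset[0,\infty)$ by integrating $1_{\{g\in A\}}$ against both sides.

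\textbf{Second term.} Under $\P_x^{(0)}$ the path never visits $0$, so $g=0$ there, $\P_x^{(0)}$-a.s. This is the defining property of conditioning to avoid zero. The second term therefore contributes
\begin{align*}
\frac{h(x)\P_x^{(0)}[\Gamma_\infty^\mu]}{\varphi^\mu(x)}\,\delta_0(A).
\end{align*}
If $x=0$ we have $h(0)=0$, so this term vanishes and no issue arises.

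\textbf{First term.} Fix $u>0$ and take a concatenated path $w=w_1\bullet w_2$, where $w_1$ is a bridge path on $[0,u]$ ending at $0$ and $w_2$ is drawn from $\P_0^{(0)}$. The process conditioned to avoid zero started from $0$ leaves $0$ immediately and never returns, so $g(w)=u$ almost surely under the product law. Since $\Gamma^\mu$ is a multiplicative functional, the weight factorizes as $\Gamma_\infty^\mu(w)=\Gamma_u^\mu(w_1)\,\Gamma_\infty^\mu(w_2)$. The concatenation is a product of independent laws, so integrating $1_{\{g\in A\}}$ gives
\begin{align*}
\frac{1}{\varphi^\mu(x)}\int_A \P_{x,0}^u[\Gamma_u^\mu]\,\P_0^{(0)}[\Gamma_\infty^\mu]\,p_u(-x)\,du .
\end{align*}
Adding the two contributions yields the stated formula. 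The indicator $1_{\{u>0\}}$ simply records that the first term charges only $(0,\infty)$.

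\textbf{Main obstacle.} The main point to check is that $g=u$ a.s. under $\P_{x,0}^u\bullet\P_0^{(0)}$. We need two facts.
\begin{enumerate}
\item $\P_0^{(0)}$ has no zero after time $0$. We take this from Takeda--Yano's construction, where $\P^{(0)}_0$ is the $h$-transform of the excursion law or the process killed at $T_0$.
\item The bridge does not produce $X_t=0$ for $t>u$. This is automatic, since the bridge lives only on $[0,u]$.
\end{enumerate}
We also use that $g<\infty$ $\Q_x^\mu$-a.s. This follows from the transience $|X_t|\to\infty$, $\Q_x^\mu$-a.s., established above, and it is consistent with the total mass identity at $A=[0,\infty)$. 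That identity reduces to $\mathscr{P}_x[\Gamma^\mu_\infty]=\varphi^\mu(x)$, which we have already shown, so it serves as a consistency check.
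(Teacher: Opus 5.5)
Your proposal is correct and follows essentially the paper's route: the paper also reads the law of $g$ off the decomposition in Theorem \ref{main-thm4}, with $g=u$ almost everywhere on the concatenated component (because the path under $\P_0^{(0)}$ never returns to $0$) and $g=0$ under $\P_x^{(0)}$ by the convention $\sup\emptyset=0$. The multiplicativity of $\Gamma^\mu$ that you invoke is exactly what the paper uses to pass from $\Q_x^\mu=\Gamma_\infty^\mu\cdot\mathscr{P}_x/\varphi^\mu(x)$ to that decomposition. In your argument it is therefore already built into the statement of Theorem \ref{main-thm4} and need not be re-derived.
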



\appendix
\section{Appendix: Proofs of Auxiliary Lemmas}
\label{Appendix}
In this appendix, we provide the proofs omitted from the main text.

\begin{proof}[Proof of (\ref{Psi-zero})]
Assume that there exists $\lambda_0>0$ such that $\Psi(\lambda_0)=0$. By a well-known property of characteristic functions (see, e.g., Theorem 3.5.2 of \cite{Durrett}), $\Psi$ is periodic, that is,
\begin{align}
  \Psi(\lambda+\lambda_0)=\Psi(\lambda)\qquad \text{for}\ \lambda\in \R.
\end{align}
Thus, we have
\begin{align*}
  \int_0^{\infty} \left|\frac{1}{q+\Psi(\lambda)}\right|d\lambda&=\sum_{n=0}^\infty\int_{n\lambda_0}^{(n+1)\lambda_0}\left|\frac{1}{q+\Psi(\lambda)}\right|d\lambda=\sum_{n=0}^\infty\int_{0}^{\lambda_0}\left|\frac{1}{q+\Psi(\lambda)}\right|d\lambda.
\end{align*}
Since $\Re \Psi(\lambda)\ge 0$ and $\Psi$ is continuous, the right-hand side equals infinity. This contradicts Hypothesis \ref{Hyp}. This completes the proof.
\end{proof}

\begin{proof}[Proof of Lemma \ref{lem4.1}]
Since $f\in C_c^\infty(\R)$, the first and second terms of the right-hand side are integrable. By Taylor's theorem and by the definition of a \Levy\ measure $\Pi$, we have
\begin{align*}
 &\int_\R \,dx \int_{|y|<1}\Big|f(x+y)-f(x)-y f'(x)\Big|\,\Pi(dy)\\
  &\qquad \le \int_{|y|<1}y^2\,\Pi(dy)\int_0^1 (1-\theta)\,d\theta \int_\R |f''(x+\theta y)|\,dx\\
  &\qquad =\frac{\|f''\|_{L^1}}{2}\int_{|y|<1}y^2\,\Pi(dy)<\infty.
  \stepcounter{equation}\tag{\theequation}
\end{align*}
By the definition of a \Levy\ measure $\Pi$, we have
\begin{align*}
  \int_\R \,dx \int_{|y|\ge 1}\Big|f(x+y)-f(x)\Big|\,\Pi(dy)&=\int_{|y|\ge 1}\,\Pi(dy)\int_\R \Big|f(x+y)-f(x)\Big|\,dx\\
  &\le 2\|f\|_{L^1}\int_{|y|\ge 1}\,\Pi(dy)<\infty.
  \stepcounter{equation}\tag{\theequation}
\end{align*}
Thus, the third term of the right-hand side of (\ref{Lf-equation}) is integrable. This completes the proof.
\end{proof}

\begin{proof}[Proof of Lemma \ref{lem4.2}]
By Dynkin's formula (see, e.g., Lemma 19.21 of \cite{Kallenberg}), the process
\begin{align}
\left(f(X_t)-f(x)-\int_0^t \mathscr{L}f(X_s)\,ds\right)_{t\ge 0}
\end{align}
is a $\P_x$-martingale. Thus, we have
\begin{align}
  P_t f(x)-f(x)=\P_x\left[\int_0^t \mathscr{L}f(X_s)\,ds\right]=\int_0^t P_s \mathscr{L}f(x)\,ds.
\end{align}
Since the semigroup $(P_t)$ restricted to $C_0(\R)$ extends to a strongly continuous semigroup on $L^1(\R)$ (see, e.g., Proposition 12.7 of \cite{Berg-Forst}), we have
\begin{align}
  \left\|\frac{P_t f-f}{t}-\mathscr{L}f\right\|_{L^1}&\le \frac{1}{t}\int_0^t \Big\|P_s\mathscr{L}f-\mathscr{L}f\Big\|_{L^1}\,ds\to 0.
\end{align}
This completes the proof.
\end{proof}

\begin{proof}[Proof of Lemma \ref{lem4.8}]
Let $u:=ch-f$. By Proposition \ref{prop4.5}, we have
\begin{align}
\label{lem4.8-eq1}
\mathscr{L}u=c\mathscr{L}h-\mathscr{L}f=0.
\end{align}
For $z\in \R$, we define
\begin{align}
\Delta_z u(x):=u(x+z)-u(x).
\end{align}
By the subadditivity of $h$, we have
\begin{align*}
\label{lem4.8-eq2}
|\Delta_zu(x)|&\le |c||h(x+z)-h(x)|+|f(x)-f(x+z)|\\
&\le |c|\max \{h(z),h(-z)\}+2\|f\|_{\max}<\infty.
\stepcounter{equation}\tag{\theequation}
\end{align*}
Since $g(\cdot-z)-g\in C_c^\infty(\R)$ for $g\in C_c^\infty(\R)$, by (\ref{lem4.8-eq1}), we have
\begin{align*}
\langle \mathscr{L}(\Delta_z u),g\rangle &=\int_\R \Delta_zu(x)\widehat{\mathscr{L}}g(x)\,dx\\
&=\int_\R u(x+z)\widehat{\mathscr{L}}g(x)\,dx-\int_{\R}u(x)\widehat{\mathscr{L}}g(x)\,dx\\
&=\int_\R u(x)\widehat{\mathscr{L}}g(x-z)\,dx-\int_{\R}u(x)\widehat{\mathscr{L}}g(x)\,dx\\
&=\langle \mathscr{L} u,g(\cdot-z) -g\rangle\\
&=0.
\stepcounter{equation}\tag{\theequation}
\end{align*}
Thus, by (\ref{Psi-zero}) and the Liouville theorem for \Levy\ processes (see Theorem 4.4 of Berger--Schilling \cite{Berger-Schilling}), there exists a constant $A_z$ such that
\begin{align}
\Delta_zu\equiv A_z\qquad \text{for}\ x\in \R.
\end{align}
For $z,w\in \R$, we have
\begin{align*}
A_{z+w}&=u(x+z+w)-u(x)\\
&=\Big(u(x+z+w)-u(x+z)\Big)+\Big(u(x+z)-u(x)\Big)\\
&=A_w+A_z.
\stepcounter{equation}\tag{\theequation}
\end{align*}
Since $h$ is continuous, by (\ref{lem4.8-eq2}), the function $z\mapsto A_z$ is locally bounded. The standard regularity theorem for Cauchy's functional equation implies that there exists $a\in \R$ such that
\begin{align}
A_z=az\qquad \text{for}\ z\in \R
\end{align}
(see, e.g., Corollary 1.6.12 and Theorem 1.6.11 of \cite{B}). Thus, we have
\begin{align}
  u(x+z)-a(x+z)=u(x)-ax+\Delta_zu(x)-az=u(x)-ax.
\end{align}
Hence, there exists a constant $b\in \R$ such that
\begin{align}
  u(x)=ax+b\qquad \text{for}\ x\in \R.
\end{align}
Therefore, we have
\begin{align}
f(x)=ch(x)-ax-b\qquad \text{for}\ x\in \R,
\end{align}
and so
\begin{align}
\label{lem4.8-eq4}
f(x)+f(-x)=c(h(x)+h(-x))-2b.
\end{align}
Since $h(x)+h(-x)\to \infty$ as $x\to \pm \infty$ (see Lemma 3.10 of Pant\'{i} \cite{Panti}) while $f$ is bounded, (\ref{lem4.8-eq4}) implies $c=0$. Thus, we have $f(x)=-ax-b$, and the boundedness of $f$ implies $a=0$. This completes the proof.
\end{proof}

\begin{proof}[Proof of Lemma \ref{lem4.12}]
By Theorem 25.3 of \cite{Sato}, the assumption $\P_0[X_1^2]<\infty$ implies
\begin{align}
  \int_{|z|\ge 1}|z|\,\Pi(dz)\le \int_{|z|\ge 1}z^2\,\Pi(dz)<\infty
\end{align}
Thus, by (\ref{Psi-zero}) and Corollary 1.8 (c) of Grzywny--Kwa\'{s}nicki \cite{Grzywny-Kwasnicki}, the function $f$ agrees almost everywhere with a polynomial. By assumption, the function $f$ must be a polynomial of degree at most one, and hence the assertion follows. This completes the proof.
\end{proof}


\section{Appendix: Penalization Via the Constant Clock}
\label{Appendix2}
In this appendix, we impose the additional assumption that the function $q\mapsto r_q(0)$ is regularly varying at $0+$ with index $-\rho$ for some $\rho\in (0,1)$, that is,
\begin{align}
\label{RV-rq}
\lim_{q\to 0+}\frac{r_{q\theta}(0)}{r_q(0)}=\theta^{-\rho}\qquad \text{for}\ \theta>0.
\end{align}
For standard Brownian motion, this condition holds with $\rho=1/2$. For a recurrent symmetric $\alpha$-stable process, it holds with $\rho=1-1/\alpha$.

Under this additional assumption, we show that the penalization limit in (\ref{main-penallim}) also holds along the constant clock:

\begin{thm}
Suppose that (\ref{RV-rq}) holds. Then for every $x\in \R$ and $s\ge 0$,
 \begin{align}
 \label{t-penal}
   \lim_{t\to \infty}\frac{\P_x[F_s\cdot \Gamma_t^\mu]}{\P_x[\Gamma_t^\mu]}=\P_x\left[F_s\cdot\frac{\varphi^\mu(X_s)\Gamma_s^\mu}{\varphi^\mu(x)} \right]\qquad \text{for}\ F_s\in b\F_s.
 \end{align}
Here, $\varphi^\mu$ is the function appearing in Theorem \ref{mainthm1}.
\end{thm}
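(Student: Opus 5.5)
Since $t\mapsto\P_x[\Gamma_t^\mu]$ is non-increasing, I would pass from the exponential-clock asymptotics of Theorem \ref{mainthm1} to deterministic-time asymptotics by a Tauberian argument, and then use the Markov property at time $s$.

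\emph{Step 1 (Tauberian step).} Write $g_x(t):=\P_x[\Gamma_t^\mu]$; it is non-increasing because $\Gamma^\mu$ is non-increasing. Its Laplace transform is
\begin{align*}
\int_0^\infty e^{-qt}g_x(t)\,dt=\frac{1}{q}\P_x\Big[\Gamma_{\bm{e}_q}^\mu\Big]\sim \frac{\varphi^\mu(x)}{q\,r_q(0)}\qquad (q\to 0+),
\end{align*}
by Proposition \ref{prop3.2}. By (\ref{RV-rq}), the function $1/(q\,r_q(0))$ is regularly varying at $0+$ with index $-(1-\rho)$, where $1-\rho\in(0,1)$. Karamata's Tauberian theorem then gives
\begin{align*}
\int_0^t g_x(u)\,du\sim \frac{\varphi^\mu(x)\,t}{\Gamma(2-\rho)\,r_{1/t}(0)}\qquad (t\to\infty).
\end{align*}
Since $g_x$ is monotone, the monotone density theorem yields
\begin{align*}
g_x(t)\sim\frac{(1-\rho)\varphi^\mu(x)}{\Gamma(2-\rho)\,r_{1/t}(0)}.
\end{align*}
That is, $r_{1/t}(0)\,\P_x[\Gamma_t^\mu]\to c_\rho\,\varphi^\mu(x)$ with $c_\rho:=1/\Gamma(1-\rho)$, and the convergence holds for every $x$.

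\emph{Step 2 (Markov property).} For $t>s$, the Markov property and the multiplicativity of $\Gamma^\mu$ give
\begin{align*}
\P_x[F_s\Gamma_t^\mu]=\P_x\big[F_s\Gamma_s^\mu\,g_{X_s}(t-s)\big].
\end{align*}
Regular variation gives $r_{1/t}(0)/r_{1/(t-s)}(0)\to1$. Hence, pointwise,
\begin{align*}
r_{1/t}(0)\,g_{X_s}(t-s)\to c_\rho\,\varphi^\mu(X_s).
\end{align*}
Once this pointwise convergence is upgraded to convergence of expectations, dividing by the same statement with $F_s=1$ (and $s=0$) produces (\ref{t-penal}), because $c_\rho$ cancels.

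\emph{Step 3 (main obstacle: domination).} The difficulty is justifying
\begin{align*}
\P_x\big[F_s\Gamma_s^\mu\, r_{1/t}(0)\,g_{X_s}(t-s)\big]\to c_\rho\,\P_x\big[F_s\Gamma_s^\mu\varphi^\mu(X_s)\big].
\end{align*}
Since $|F_s|$ is bounded, it suffices to prove uniform integrability of $r_{1/t}(0)\,g_{X_s}(t-s)$ under $\P_x$. My plan is a uniform bound of the form
\begin{align*}
r_{1/t}(0)\,g_y(t)\le C(1+h(y))\qquad \text{for all } y\in\R \text{ and large } t.
\end{align*}
Combined with $\P_x[h(X_s)]<\infty$ and the $L^1$ convergence results of Section \ref{S3} (in particular Lemma 3.2 of Iba--Rivero \cite{Iba-Rivero}), this should give the required control.

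To get the bound, I would first use monotonicity: $g_y(t)\le \frac{1}{t}\int_0^t g_y(u)\,du\le \frac{e}{t}\int_0^\infty e^{-u/t}g_y(u)\,du$. This equals $e\,\P_y[\Gamma_{\bm{e}_{1/t}}^\mu]$, which is $e\,\varphi_{1/t}^\mu(y)/r_{1/t}(0)$. Therefore
\begin{align*}
r_{1/t}(0)\,g_y(t)\le e\,\varphi_{1/t}^\mu(y).
\end{align*}
By (\ref{phiq-equation}), $\varphi_q^\mu(y)=C_q^\mu+\int_K f_q^\mu(z)\,h_q(y-z)\,\mu(dz)$, so it suffices to dominate this by a uniformly integrable family. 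The convergence $\P_x[|h_q(X_s-z)-h(X_s-z)|]\to 0$ from Iba--Rivero shows that $h_q(X_s-z)$ is uniformly integrable in $q$, and the bounds on $C_q^\mu$ and $f_q^\mu$ from Proposition \ref{prop3.1} then make $\varphi_{1/t}^\mu(X_s)$ uniformly integrable. A generalized dominated convergence (Vitali) argument will conclude. The same domination also handles replacing $t$ by $t-s$.
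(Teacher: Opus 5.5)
Your proof is correct. Steps 1 and 2 coincide with the paper's argument: Karamata's Tauberian theorem and the monotone density theorem give $r_{1/t}(0)\P_x[\Gamma_t^\mu]\to\varphi^\mu(x)/\Gamma(1-\rho)$, and then one uses the Markov property at time $s$ together with $r_{1/t}(0)/r_{1/(t-s)}(0)\to 1$.

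The genuine difference is in Step 3, where you pass the limit inside the expectation.

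\emph{The paper's route.} It needs no majorant at all. It applies Scheff\'e's lemma to the nonnegative variables $M_t=r_{1/t}(0)\P_x[\Gamma_t^\mu\mid\F_s]$. The required convergence $\P_x[M_t]\to\P_x[\lim M_t]$ holds because $\P_x[M_t]=r_{1/t}(0)\P_x[\Gamma_t^\mu]\to\varphi^\mu(x)/\Gamma(1-\rho)$. This equals $\P_x[\varphi^\mu(X_s)\Gamma_s^\mu]/\Gamma(1-\rho)$ precisely by the martingale property in Proposition \ref{prop3.4}.

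\emph{Your route.} You build an explicit dominating family from monotonicity of $g_y$:
$r_{1/t}(0)\,g_y(t-s)\le e\,\frac{r_{1/t}(0)}{r_{1/(t-s)}(0)}\,\varphi^\mu_{1/(t-s)}(y)$.
You then use generalized dominated convergence.

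\emph{What each buys.} Your argument never uses the martingale property, since you divide by Step 1 with $s=0$. Taking $F_s=1$, it even re-derives $\P_x[\varphi^\mu(X_s)\Gamma_s^\mu]=\varphi^\mu(x)$ under (\ref{RV-rq}). The paper's argument is shorter, but it rests on Proposition \ref{prop3.4}, whose proof the paper omits.

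Two tidy-ups are worth making.
\begin{enumerate}
\item The announced bound $r_{1/t}(0)g_y(t)\le C(1+h(y))$ is never proved, and it is not needed.
\item Your per-$z$ uniform-integrability argument for $h_q(X_s-z)$ would still need uniformity in $z\in K$ before you integrate against $\mu$. You can skip it: the uniform integrability you need is exactly the $L^1(\P_x)$ convergence $\varphi_q^\mu(X_s)\to\varphi^\mu(X_s)$ already established in Section \ref{S3}, so cite that directly. Alternatively, the bound $h_q\le H$ with $\P_0[H(X_s)]<\infty$, from the proof of Proposition \ref{prop4.5}, together with the uniform bounds on $f_q^\mu$ and $C_q^\mu$, gives a genuine integrable majorant.
\end{enumerate}
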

\begin{proof}
Rewriting Proposition \ref{prop3.2} using the density of the exponential distribution, we have
\begin{align}
  \lim_{q\to 0+}qr_q(0)\int_0^\infty e^{-qt}\P_x[\Gamma_t^\mu]dt=\varphi^\mu(x)\qquad \text{for}\ x\in \R.
\end{align}
Since $t\mapsto \P_x[\Gamma_t^\mu]$ is non-increasing, Karamata's monotone density theorem (see, e.g., Theorem 5.14 of \cite{Kyprianou}) yields
\begin{align}
\label{B4}
  \lim_{t\to \infty} r_{\frac{1}{t}}(0)\P_x[\Gamma_t^\mu]= \frac{\varphi^\mu(x)}{\Gamma(1-\rho)},
\end{align}
where $\Gamma(\cdot)$ denotes Euler's gamma function. Fix $s\ge 0$. For $t>s$, we define
\begin{align}
M_t:=r_{\frac{1}{t}}(0)\P_x[\Gamma_t^\mu\mid \F_s].
\end{align}
By (\ref{B4}) and the multiplicative property of $\Gamma^\mu$, we have
\begin{align}
  \lim_{t\to \infty}M_t=\lim_{t\to \infty}\Gamma_s^\mu\cdot \frac{r_{\frac{1}{t}}(0)}{r_{\frac{1}{t-s}}(0)}\cdot r_{\frac{1}{t-s}}(0)\P_{X_s}[\Gamma_{t-s}^\mu]=\frac{1}{\Gamma(1-\rho)} \varphi^\mu(X_s)\Gamma_s^\mu\qquad \P_x\text{-a.s.}
\end{align}
By (\ref{B4}) and Proposition \ref{prop3.4}, we have
\begin{align}
\lim_{t\to \infty}\P_x[M_t]=\lim_{t\to \infty}r_{\frac{1}{t}}(0)\P_x[\Gamma_t^\mu]=\frac{\varphi^\mu(x)}{\Gamma(1-\rho)}=\frac{1}{\Gamma(1-\rho)} \P_x[\varphi^\mu(X_s)\Gamma_s^\mu].
\end{align}
Thus, Scheff\'{e}'s lemma implies
\begin{align}
 r_{\frac{1}{t}}(0)\P_x[\Gamma_t^\mu\mid \F_s]\to \frac{1}{\Gamma(1-\rho)} \varphi^\mu(X_s)\Gamma_s^\mu\qquad \text{in}\ L^1(\P_x).
\end{align}
Therefore, for $F_s\in b\F_s$, we have
\begin{align}
  \lim_{t\to \infty}r_{\frac{1}{t}}(0)\P_x[F_s\cdot \Gamma_t^\mu]=\lim_{t\to \infty}\P_x[F_s\cdot M_t]=\frac{1}{\Gamma(1-\rho)}\P_x[F_s\cdot \varphi^\mu(X_s)\Gamma_s^\mu].
\end{align}
Dividing this equation by (\ref{B4}), we obtain (\ref{t-penal}). This completes the proof.
\end{proof}


\section*{Acknowledgments}
The author is deeply grateful to Prof. Kouji Yano and Prof. V\'{i}ctor M. Rivero for their insightful comments, from which this work benefited greatly. The author also thanks Koyo Oishi for his assistance with this research. The author acknowledges support from JSPS KAKENHI Grant Number 26KJ1605.

\bibliographystyle{plain}

\end{document}